\newtheorem{theorem}{Theorem}
\newtheorem{proposition}[theorem]{Proposition}            
\newtheorem{corollary}[theorem]{Corollary}                
\newtheorem{lemma}[theorem]{Lemma}
\newtheorem{remark}[theorem]{Remark}
\newtheorem{definition}{Definition}
\begin{document}
\title[Discrete Cores of Free Products]{Discrete Cores of type III free product factors}
\author[Y. Ueda]
{Yoshimichi UEDA}
\address{
Graduate School of Mathematics, 
Kyushu University, 
Fukuoka, 819-0395, Japan
}
\email{ueda@math.kyushu-u.ac.jp}
\thanks{Supported by Grant-in-Aid for Scientific Research (C) 24540214.}
\thanks{AMS subject classification: Primary:\, 46L54;
secondary:\,46L10.}
\thanks{Keywords: Type III factor, Free product, Discrete decomposition, Hyperfinite von Neumann algebra, Free group factor, Almost periodic state.}

\begin{abstract} 
We give a general description of the discrete decompositions of type III factors arising as central summands of free product von Neumann algebras based on our previous works. This enables us to give several precise structural results on type III free product factors. 
\end{abstract} 
\maketitle

\allowdisplaybreaks{

\section{Introduction} 

The present paper is a sequel to our previous studies of completely general free product von Neumann algebras with emphasis on type III factors \cite{Ueda:AdvMath11},\cite{Ueda:MRL11}. The main purpose here is to give one simple description of the discrete decomposition of arbitrary type III factor arising as (central summand of) a free product von Neumann algebra in terms of amalgamated free product of very particular form and its applications. The description enables us to complete our `initial' research plan to study free product von Neumann algebras from the viewpoint of general structure theory of type III factors mainly due to Connes and Takesaki. The plan dates back to the mid 90s; in fact, our initial attempts in the direction are \cite{Ueda:PacificJMath99},\cite{Ueda:MathScand01}. The consequences that we will give as applications of the description, which will be summarized in the next paragraph, are precise structural results that certainly have some potential in future applications of free product von Neumann algebras. 

Firstly the description provides an alternative (and much more natural) route toward the question that Dykema seriously investigated in \cite{Dykema:FieldsInstituteComm97}, with avoiding some difficult points there, and moreover enables us to improve his previous result by removing all the superfluous assumptions. In particular, our result provides a natural class of von Neumann algebras which includes all the hyperfinite von Neumann algebras and is closed under taking free products with respect to arbitrary almost periodic states (including tracial ones and periodic ones by definition). See Theorem \ref{T3} and Corollary \ref{C4} for precise statements. Remark here that we have already known, by \cite[Corollary 3.1]{Ueda:MRL11}, that the almost periodicity of given states is necessary and sufficient to make the type III factor that appears as a central summand of the resulting free product von Neumann algebra admits discrete decomposition. A specialization of the result is that the centralizer of the free product state of any `non-tracial' free product of hyperfinite von Neumann algebras with respect to almost periodic states becomes an irreducible subfactor isomorphic to $L(\mathbb{F}_\infty)$ (in the diffuse factor summand; remark that any non-trivial and non-tracial free product von Neumann algebra is a diffuse factor plus a finite dimensional algebra, see \cite[Theorem 4.1]{Ueda:AdvMath11}). This is nothing less than positive evidence to the question \cite[\S\S5.4]{Ueda:AdvMath11} asking whether or not any free product von Neumann algebra of hyperfinite ones is a free Araki--Woods factor introduced by Shlyakhtenko \cite{Shlyakhtenko:PacificJMath97} (plus a finite dimensional algebra). 

Secondly we point out that all the results mentioned so far and a recent work due to Boutonnet, Houdayer and Raum \cite{BoutonnetHoudayerRaum:Preprint12} enable one to show the lack of Cartan subalgebra in both the centralizer and the discrete core of the (unique) type III factor summand of an arbitrary free product von Neumann algebra with respect to almost periodic states; see Proposition \ref{P6} for details. Related to this we also point out that both the centralizer and the discrete core of the (unique) type III factor summand of an arbitrary free product von Neumann algebra with respect to almost periodic states must be prime; see Proposition \ref{P7}.    

Thirdly we derive, from the above-mentioned description of discrete cores, two general structural results on the centralizers of free product states of particular form. See Proposition \ref{P9} for precise statements. Part of these results enables us to relate \cite[Corollary 6.5 (4)]{IoanaPetersonPopa:Acta08} to \cite[Theorem 5.10]{Houdayer:Crelle09} via discrete decompositions of type III factors. See Remark \ref{R10} for details.  

We follow the notational rule in our previous papers \cite{Ueda:AdvMath11},\cite{Ueda:MRL11} (see the glossary at the end of the introduction of \cite{Ueda:AdvMath11}). In what follows, we say that a von Neumann algebra is non-trivial if it is not $1$ dimensional. In the next \S2 we give and prove the main results. The next \S3 provides some addition to \cite{Dykema:MathProcCambPhilSoc04}, which is necessary in \S\S2.5. The final \S4 is less original, but makes the contents in \S\S2.3 be accessible by any serious reader. In fact, some discussions in \cite{Redelmeier:arXiv:1207.1117} seem sketchy, and the core discussions there and in its source \cite{DykemaRedelmeier:arXiv11} look needlessly involved for the necessary fact here, because those works aimed at stronger assertions. Hence we include a simplified (or `optimized') proof of the fact that we need in \S\S2.3 for the reader's convenience.  

\section{Main Results} 

\subsection{Preliminaries}\label{SS2.1} Throughout this paper, we assume that {\it $M_i$, $i=1,2$, are non-trivial $\sigma$-finite von Neumann algebras with $(\dim(M_1),\dim(M_2)) \neq (2,2)$ and $\varphi_i$, $i=1,2$, are faithful normal states on them, respectively}, unless otherwise specified. Here are the consequences of \cite[Theorem 4.1]{Ueda:AdvMath11} with a useful remark that immediately comes from its proof. The resulting free product $(M,\varphi) = (M_1,\varphi)\star(M_2,\varphi_2)$ has the following structure: $M = M_d\oplus M_c$, where $M_d$ is a finite dimensional algebra (that can be described explicitly) and $M_c$ a factor of type II$_1$ or III$_\lambda$ ($\lambda\neq0$) such that the T-set $T(M_c) = \{t \in \mathbb{R}\,|\, \sigma_t^{\varphi_1} = \mathrm{Id} = \sigma_t^{\varphi_2}\}$ and $M_c' \cap M_c^\omega = \mathbb{C}$, where $M_c^\omega$ denotes the ultraproduct von Neumann algebra of $M_c$ associated with free ultrafilter $\omega$ (see e.g.~\cite[\S\S2.2]{Ueda:AdvMath11}). In particular, $M_c$ is of type II$_1$ if and only if both the $\varphi_i$ are tracial. When the finite dimensional algebra summand $M_d$ is appeared, the proof of the theorem implicitly shows the following (which is probably important in actual applications): there are a unique $i_0 \in \{1,2\}$ and a unique non-zero projection $p \in \mathcal{Z}(M_{i_0})$ so that $M_{i_0}p = \mathbb{C}p$, $1_{M_d} \leq p$ (in $M$), and hence $M_c$ is stably isomorphic to $p^\perp M p^\perp$ with $p^\perp := 1-p$.  Here Dykema's observation (see e.g.~\cite[Lemma 2.1]{Ueda:AdvMath11}) shows that the pair $(p^\perp M p^\perp,\delta\varphi\!\upharpoonright_{p^\perp M p^\perp})$ with $\delta := \varphi(p^\perp)^{-1}$ becomes either 
\begin{equation}\label{Eq1}
(M_1 p^\perp, \delta\varphi_1\!\upharpoonright_{M_1 p^\perp}) \star 
(p^\perp Np^\perp, \delta\varphi\!\upharpoonright_{p^\perp Np^\perp}) \quad \text{or} \quad 
(p^\perp Np^\perp, \delta\varphi\!\upharpoonright_{p^\perp Np^\perp}) \star (M_2 p^\perp, \delta\varphi_1\!\upharpoonright_{M_2 p^\perp}), 
\end{equation}
where the pair $(N,\varphi\!\upharpoonright_N)$ is 
\begin{equation}\label{Eq2}
(\mathbb{C}p\oplus\mathbb{C}p^\perp,\varphi_1\!\upharpoonright_{\mathbb{C}p\oplus\mathbb{C}p^\perp})\star(M_2,\varphi_2) \quad \text{or} \quad  
(M_1,\varphi_1)\star(\mathbb{C}p\oplus\mathbb{C}p^\perp,\varphi_2\!\upharpoonright_{\mathbb{C}p\oplus\mathbb{C}p^\perp}),
\end{equation}
respectively. Consequently, the diffuse factor summand $M_c$ is again a free product von Neumann algebra up to stable isomorphism. Hence the analysis of $M_c$ comes down to that on some free product factor with care of the descriptions Eq.\eqref{Eq1} and Eq.\eqref{Eq2}. Moreover, if a non-zero projection $e \in \mathcal{Z}(M_i)$ ($i=1$ or $2$) satisfies that $M_i e$ is diffuse, then $e \leq 1_{M_c}$, implying that $M_c$ is stably isomorphic to $eMe$, and the pair $(eMe,\varphi(e)^{-1}\varphi\!\upharpoonright_{eMe})$ is the free product of $(M_i,\varphi_i(e)^{-1}\varphi_i\!\upharpoonright_{M_i e})$ and non-trivial something, like Eq.\eqref{Eq1}-\eqref{Eq2}.   

\medskip
In the rest of this paper we further assume that both the $M_i$'s have separable preduals. 

\subsection{Discrete decomposition associated with free products}\label{SS2.2}
In what follows we assume that the diffuse factor summand $M_c$ is of type III, that is, at least one of the $\varphi_i$'s is not tracial as explained in \S\S2.1. The following facts were established in \cite[Theorem 2.1, Corollary 3.1]{Ueda:MRL11}: The diffuse factor summand $M_c$ possesses an almost periodic state if and only if both the $\varphi_i$ are almost periodic. (Recall that a faithful normal positive linear functional is said to be almost periodic if its modular operator is diagonalizable; hence tracial ones and  periodic ones are particular cases.) When this is the case, the restriction $\varphi_c := \varphi\!\upharpoonright_{M_c}$ becomes almost periodic again and $((M_c)_{\varphi_c})'\cap M_c^\omega = \mathbb{C}$; in particular, $\varphi_c$ is `extremal', which means that its centralizer becomes an irreducible subfactor of $M_c$. Hence, if the full type III factor $M_c$ admits discrete decomposition in the sense of Connes \cite{Connes:JFA74}, then the decomposition can be described as follows. In the case, both the given $\varphi_i$ must be almost periodic, and the Sd-invariant $\Gamma := \mathrm{Sd}(M_c)$ of $M_c$  becomes the multiplicative subgroup of $\mathbb{R}_+^\times$ algebraically generated by the point spectra of the modular operators $\Delta_{\varphi_i}$. Denote by $\beta$ the embedding of $\Gamma$ to $\mathbb{R}_+^\times$ and equip $\Gamma$ with the discrete topology. By group duality one gets a continuous group homomorphism $\hat{\beta} : \mathbb{R} \to G := \widehat{\Gamma}$ with dense range so that $\langle \gamma, \hat{\beta}(t) \rangle = \gamma^{\sqrt{-1}t}$ for $\gamma \in \Gamma$ and $t \in \mathbb{R}$, which gives the continuous group homomorphism $g \in G \mapsto \sigma_g^{\psi,\Gamma}$ into the automorphism group of $M$ or $M_i$ with $\psi := \varphi$ or $\varphi_i$, respectively, so that $\sigma_{\hat{\beta}(t)}^{\psi,\Gamma} = \sigma_t^{\psi}$ for every $t \in \mathbb{R}$. Since $\hat{\beta}(\mathbb{R})$ is dense in $G$, $\psi\circ\sigma_g^{\psi,\Gamma} = \psi$ holds for every $g \in G$ and each $\sigma_g^{\psi,\Gamma}$ fixes any element of the center. See e.g.~\cite[Proposition 1.1]{Connes:JFA74}. 
The {\it discrete decomposition} is 
\begin{equation}\label{Eq3}
M_c \cong \big(M_c\rtimes_{\sigma^{\varphi,\Gamma}}G\big)\rtimes_{\theta}\Gamma
\end{equation} 
with $\theta := \widehat{\sigma^{\varphi,\Gamma}}$, the dual action of $\sigma^{\varphi,\Gamma}$, which clearly comes from the cerebrated Takesaki duality (see \cite[Theorem X.2.3]{Takesaki:Book}),   
and the first crossed product $\widehat{M_c} := M_c\rtimes_{\sigma^{\varphi,\Gamma}}G$ is usually called the {\it discrete core} of $M_c$. The decomposition is unique, see \cite[Corollary 4.12]{Connes:JFA74}. (Remark that our formulation is a little bit different from Connes's, see Theorem \ref{T1} (1.4) below, but no essential difference occurs.) It turns out that the centralizer $(M_c)_{\varphi_c}$ is stably isomorphic to the discrete core, that is, 
\begin{equation}\label{Eq4}
(M_c)_{\varphi_c}\,\bar{\otimes}\,B(\ell^2) \cong M_c\rtimes_{\sigma^{\varphi,\Gamma}}G 
\end{equation}
since $\sigma^{\varphi,\Gamma}$ is a minimal action (see e.g.~the proof of Lemma \ref{L5} below). Hence the centralizer $(M_c)_{\varphi_c}$ is captured by the discrete core of $M_c$. 

\medskip
With these preliminary facts we give a general description of $M\rtimes_{\sigma^{\varphi,\Gamma}}G$ instead of $\widehat{M_c}$ itself. It is obviously a discrete decomposition analogue of our old result \cite[Theorem 5.1]{Ueda:PacificJMath99} with extra structural fact (1.3) that follows from \cite[Theorem 4.1]{Ueda:AdvMath11} and \cite[Theorem 2.1]{Ueda:MRL11}.    

\begin{theorem}\label{T1} Under the assumptions above we have 
\begin{equation}\label{Eq5}
(M\rtimes_{\sigma^{\varphi,\Gamma}}G,E_\varphi) = (M_1\rtimes_{\sigma^{\varphi_1,\Gamma}}G,E_{\varphi_1}) \star_{\mathbb{C}\rtimes G}(M_2\rtimes_{\sigma^{\varphi_2,\Gamma}}G,E_{\varphi_2}), 
\end{equation}
where the conditional expectation $E_{\varphi}$ is the restriction of the Fubini map $\varphi\bar{\otimes}\mathrm{Id}$ to the crossed product $M\rtimes_{\sigma^{\varphi,\Gamma}}G$ $(\subseteq M\bar{\otimes}B(L^2(G))$ and the $E_{\varphi_i}$, $i=1,2$, are defined just as the restrictions of $E_\varphi$ to $M_i\rtimes_{\sigma^{\varphi_i,\Gamma}}G$ $(\subseteq M\rtimes_{\sigma^{\varphi,\Gamma}}G)$. Moreover, we have{\rm:} 
\begin{itemize}
\item[(1.1)] $\mathbb{C}\rtimes G = L(G) \cong \ell^\infty(\Gamma)$ by $e_\gamma = \int_G \overline{\langle\gamma,g\rangle}\lambda(g)\,dg \longleftrightarrow \delta_\gamma$ for each $\gamma \in \Gamma$ with the Haar probability measure $dg$, where the $\delta_\gamma$, $\gamma \in \Gamma$, are the functions in $\ell^\infty(\Gamma)$ defined to be $\delta_\gamma(\gamma') := \delta_{\gamma,\gamma'}$, $\gamma' \in \Gamma$.  
\item[(1.2)] $M\rtimes_{\sigma^{\varphi,\Gamma}}G$ is semifinite and possesses a unique faithful normal semifinite tracial weight $\mathrm{Tr}$ that satisfies $\mathrm{Tr}\circ E_\varphi = \mathrm{Tr}$ and $\mathrm{Tr}(e_\gamma) = \gamma^{-1}$ for every $\gamma \in \Gamma$. 
\item[(1.3)] The central decomposition of $M\rtimes_{\sigma^{\varphi,\Gamma}}G$ becomes the direct sum of countably infinite copies of $M_d$ and the discrete core $\widehat{M_c}$. 
\item[(1.4)] The dual action $\theta$ comes from an action on the smallest algebra $\mathbb{C}\rtimes G$, which satisfies that $\theta_\gamma(e_{\gamma'}) = e_{\gamma\gamma'}$ for every $\gamma,\gamma' \in \Gamma$ and $\mathrm{Tr}\circ\theta_\gamma = \gamma^{-1}\mathrm{Tr}$ for every $\gamma \in \Gamma$ .    
\end{itemize}
\end{theorem}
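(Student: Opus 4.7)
The content of the theorem is the amalgamated free product identity \eqref{Eq5}, together with the four auxiliary items; the latter will drop out by routine checks once the former is established. The plan is to imitate the continuous-core argument of our \cite[Theorem 5.1]{Ueda:PacificJMath99}, substituting the compact abelian group $G=\widehat{\Gamma}$ for $\mathbb{R}$. First I would realize $M\rtimes_{\sigma^{\varphi,\Gamma}}G$ inside $M\bar{\otimes}B(L^2(G))$ via the standard covariant pair $(\pi,\lambda)$, with $M_i\rtimes_{\sigma^{\varphi_i,\Gamma}}G$ generated by $\pi(M_i)\cup\lambda(G)$ and sharing the common subalgebra $L(G)=\mathbb{C}\rtimes G$. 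The conditional expectation $E_\varphi$ is given on the dense span by $E_\varphi(\pi(m)\lambda(g))=\varphi(m)\lambda(g)$, so its kernel inside $M_i\rtimes G$ is the weak closure of the span of $\pi(m)\lambda(g)$ with $m\in M_i^\circ:=\ker\varphi_i$ and $g\in G$.

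To verify freeness with amalgamation, I would take an alternating word $x_1\cdots x_n$ with $x_k\in(M_{i_k}\rtimes G)\cap\ker E_{\varphi_{i_k}}$ and $i_k\neq i_{k+1}$, expand each $x_k=\sum_{g_k}\pi(m_{g_k}^{(k)})\lambda(g_k)$ with $m_{g_k}^{(k)}\in M_{i_k}^\circ$, and apply the covariance relation $\lambda(g)\pi(m)=\pi(\sigma_g^{\varphi,\Gamma}(m))\lambda(g)$ repeatedly to rewrite the product as a sum of $\pi(y_{g_1,\ldots,g_n})\lambda(g_1\cdots g_n)$. Since $\sigma^{\varphi,\Gamma}$ restricts to $\sigma^{\varphi_i,\Gamma}$ on $M_i$ and preserves $\varphi_i$, each $y_{g_1,\ldots,g_n}$ is an alternating product of elements of the $M_{i_k}^\circ$; hence freeness of $M_1,M_2$ in $(M,\varphi)$ forces $\varphi(y_{g_1,\ldots,g_n})=0$ and thus $E_\varphi(x_1\cdots x_n)=0$. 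The equality $(M_1\rtimes G)\vee(M_2\rtimes G)=M\rtimes G$ is then immediate.

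Item (1.1) follows from Pontryagin duality: $L(G)\cong\ell^\infty(\Gamma)$ via $\lambda(g)\mapsto\langle\cdot,g\rangle$, whence $e_\gamma\leftrightarrow\delta_\gamma$ by orthogonality of characters. For (1.2), since $\sigma^\varphi_t=\sigma^{\varphi,\Gamma}_{\hat{\beta}(t)}$, the dual weight $\widetilde{\varphi}$ on $M\rtimes G$ has inner modular group (implemented by $\lambda(\hat{\beta}(\cdot))$), so $M\rtimes G$ is semifinite; the trace $\mathrm{Tr}$ with $\mathrm{Tr}\circ E_\varphi=\mathrm{Tr}$ and $\mathrm{Tr}(e_\gamma)=\gamma^{-1}$ is then produced from $\widetilde{\varphi}$ via Takesaki's Radon--Nikodym cocycle. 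For (1.3), $G$-invariance of $M=M_d\oplus M_c$ yields $M\rtimes G=(M_d\rtimes G)\oplus\widehat{M_c}$, with $M_d\rtimes G\cong M_d\bar{\otimes}L(G)\cong\bigoplus_{\gamma\in\Gamma}M_d$ by innerness of $\sigma^\varphi\!\upharpoonright_{M_d}$ on the finite dimensional $M_d$. Item (1.4) is an immediate computation from $\theta_\gamma(\pi(m))=\pi(m)$ and $\theta_\gamma(\lambda(g))=\overline{\langle\gamma,g\rangle}\lambda(g)$, by substitution in the integral defining $e_{\gamma'}$ and by the standard scaling property of the dual action on the canonical trace.

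The hardest part will be \eqref{Eq5} itself: its shape parallels the continuous-core case, but one must verify that the compact $G$-action interacts correctly with freeness. This rests on $\sigma^{\varphi,\Gamma}$ preserving each $M_i$ individually (which follows from the density of $\hat{\beta}(\mathbb{R})$ in $G$ and $\sigma^\varphi|_{M_i}=\sigma^{\varphi_i}$), so that the covariance manipulation remains within alternating centered words in $(M,\varphi)$.
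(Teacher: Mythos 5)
Your proposal is correct and is essentially the paper's own argument: for Eq.\eqref{Eq5} the paper simply quotes \cite[Proposition 1]{Ueda:IMRN00} (free independence of the $M_i\,\bar{\otimes}\,B(L^2(G))$ over $\mathbb{C}\,\bar{\otimes}\,B(L^2(G))$ with respect to the Fubini map $\varphi\,\bar{\otimes}\,\mathrm{Id}$) and restricts to the crossed products, which is exactly a packaged form of the covariance/alternating-word computation you carry out by hand. Items (1.1)--(1.4) are handled in the paper just as you outline: $\sigma_t^{\widetilde{\varphi}} = \mathrm{Ad}\,\lambda(\hat{\beta}(t))$ gives semifiniteness, $\mathrm{Tr} := \widetilde{\varphi}_{H^{-1}}$ with $H = \sum_{\gamma\in\Gamma}\gamma\,e_\gamma$ yields (1.2), innerness of $\sigma^{\varphi,\Gamma}$ on the finite dimensional summand $M_d$ gives (1.3), and the computation $\theta_\gamma(e_{\gamma'}) = e_{\gamma\gamma'}$ together with $\theta_\gamma\!\upharpoonright_M = \mathrm{Id}$ gives (1.4).
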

\begin{proof} Firstly note that $\varphi\circ\sigma^{\varphi,\Gamma}_g = \varphi$ for every $g \in G$ as remarked before. Hence the conditional expectation $E_\varphi : M\rtimes_{\sigma^{\varphi,\Gamma}}G \to \mathbb{C}\rtimes G$ makes sense. Then   Eq.\eqref{Eq5} follows from the almost trivial fact that $M_1\bar{\otimes}B(L^2(G))$ and $M_2\bar{\otimes}B(L^2(G))$ are freely independent with amalgamation over $\mathbb{C}\bar{\otimes}B(L^2(G))$ with respect to the Fubini map $\varphi\bar{\otimes}\mathrm{Id}$, see \cite[Proposition 1]{Ueda:IMRN00}. 

The assertion (1.1) is standard. 

Let $\widetilde{\varphi}$ be the dual weight on $M\rtimes_{\sigma^{\varphi,\Gamma}}G$ constructed out of $\varphi$. See \cite[Definition X.1.16]{Takesaki:Book}. Let us briefly review its construction for the reader's convenience. Let $K(G,M)$ be the involutive algebra of $\sigma$-strong*-continuous functions from $G$ to $M$ endowed with product and involution 
$$
(x*y)(g) = \int_G \sigma_{g'}^{\varphi,\Gamma}(x(gg')) y(g'{}^{-1})\,dg', \quad x^\sharp(g) = \sigma_{g^{-1}}^{\varphi,\Gamma}(x(g^{-1}))^*. 
$$
The mapping $x \in K(G,M) \mapsto \int_G \lambda(g)\, x(g)\,dg \in M\rtimes_{\sigma^{\varphi,\Gamma}}G$ defines a $*$-representation, where the $\lambda(g)$ are the canonical unitary representation of $G$ into $M\rtimes_{\sigma^{\varphi,\Gamma}}G$. By \cite[Theorem X.1.17, Eq.(42)]{Takesaki:Book} together with the usual polarization trick one has
\begin{equation*} 
\widetilde{\varphi}((\sideset{}{_G}\int \lambda(g)\,y(g)\,dg)^*(\sideset{}{_G}\int \lambda(g)\,x(g)\,dg)) = \int_G \varphi(y(g)^* x(g))\,dg, \quad x,y \in K(G,M). 
\end{equation*} 
In particular, $\widetilde{\varphi}(e_\gamma) = \tilde{\varphi}(e_\gamma^* e_\gamma) = \int_G \varphi(1)\,dg = \varphi(1)$, which shows that the restriction of $\widehat{\varphi}$ to $\mathbb{C}\rtimes G$ is semifinite. 

Since $\varphi\circ\sigma^{\varphi,\Gamma}_g = \varphi$ for every $g \in G$, we have, by \cite[Theorem X.1.17]{Takesaki:Book}, $\sigma_t^{\widetilde{\varphi}}(x) = \sigma_t^\varphi(x)$ for every $x \in M \subseteq M\rtimes_{\sigma^{\varphi,\Gamma}}G$ and $\sigma_t^{\widetilde{\varphi}}(\lambda(g)) = \lambda(g)$ for every $g \in G$. In particular, we get $\sigma_t^{\widetilde{\varphi}} = \mathrm{Ad}\lambda(\hat{\beta}(t))$ for every $t \in \mathbb{R}$. This means that the unique non-singular positive self-adjoint operator $H$ affiliated with $\mathbb{C}\rtimes G$ determined by $\lambda(\hat{\beta}(t)) = H^{\sqrt{-1}t}$, $t \in \mathbb{R}$, gives a faithful normal semifinite tracial weight $\mathrm{Tr}(-) := \widetilde{\varphi}_{H^{-1}}(-) = \lim_{\varepsilon\searrow0}\widetilde{\varphi}((H^{-1/2})_\varepsilon\,(-)\,(H^{-1/2})_\varepsilon)$ with $(H^{-1/2})_\varepsilon := H^{-1/2}(1+\varepsilon H^{-1/2})^{-1}$, $\varepsilon > 0$, see \cite[Lemma VIII.2.8]{Takesaki:Book}. The above formula of $\sigma_t^{\widetilde{\varphi}}$ together with the semifiniteness of the restriction $\widetilde{\varphi}\!\upharpoonright_{\mathbb{C}\rtimes G}$ shows the existence of a faithful normal conditional expectation $E: M\rtimes_{\sigma^{\varphi,\Gamma}}G \to \mathbb{C}\rtimes G$. For every $x \in M \subseteq M\rtimes_{\sigma^{\varphi,\Gamma}}G$ and every $\gamma_1, \gamma_2 \in \Gamma$ one has $e_{\gamma_1}^* E(x)e_{\gamma_2} = E(e_{\gamma_1}^* x e_{\gamma_2})$ and $xe_{\gamma_2} = \int_G \lambda(g) (\overline{\langle \gamma_2, g\rangle} \sigma_{g^{-1}}^{\varphi,\Gamma}(x))\,dg$, and hence the usual polarization trick justifies that  
\begin{align*}
\widetilde{\varphi}(e_{\gamma_1}^* E(x) e_{\gamma_2}) 
&= \widetilde{\varphi}\big(\big(
\int_G \lambda(g) \overline{\langle \gamma_1, g\rangle}\,dg\big)^* 
\big(\int_G \lambda(g) (\overline{\langle \gamma_2, g\rangle} \sigma_{g^{-1}}^{\varphi,\Gamma}(x))\,dg\big)\big) \\
&= 
\int_G \langle \gamma_1, g\rangle\,\overline{\langle \gamma_2, g\rangle} \varphi(\sigma_{g^{-1}}^{\varphi,\Gamma}(x))\,dg 
= \varphi(x)\widetilde{\varphi}(e_{\gamma_1}^* e_{\gamma_2}).
\end{align*} 
Once passing to the GNS representation associated with the restriction $\widetilde{\varphi}\!\upharpoonright_{\mathbb{C}\rtimes G}$ one can conclude $E(x) = \varphi(x)1 = E_\varphi(x)$, since the linear span of the $e_\gamma$, $\gamma \in\Gamma$ form a $\sigma$-strong-dense $*$-subalgebra of $\mathbb{C}\rtimes G$ so that it is not hard to see that the linear span becomes a dense subspace of the GNS Hilbert space via the canonical embedding. It immediately follows that $E = E_\varphi$, and hence we see that $E_\varphi = E$ preserves the $\mathrm{Tr}$, since $(H^{-1/2})_\varepsilon \in \mathbb{C}\rtimes G$ for every $\varepsilon>0$. Notice that 
$$
\lambda(g) e_\gamma = \int_G \overline{\langle\gamma,g'\rangle}\lambda(gg')\,dg' = \int_G \overline{\langle\gamma,g^{-1}g'\rangle}\lambda(g')\,dg' = \langle\gamma,g\rangle e_\gamma,
$$
implying $\lambda(g) = \sum_{\gamma\in\Gamma} \langle\gamma,g\rangle e_\gamma$ for every $g \in G$. Thus we have
$$
H^{\sqrt{-1}t} = \lambda(\hat{\beta}(t)) = \sum_{\gamma\in\Gamma}\langle\gamma,\hat{\beta}(t)\rangle e_\gamma = \sum_{\gamma\in\Gamma} \gamma^{\sqrt{-1}t} e_\gamma, 
$$ 
and the spectral decomposition $H = \sum_{\gamma\in\Gamma}\gamma\,e_\gamma$. Since $(H^{-1/2})_\varepsilon\,e_\gamma\,(H^{-1/2})_\varepsilon = \frac{\gamma^{-1}}{(1+\varepsilon\gamma^{-1/2})^2} e_\gamma$ for each $\varepsilon>0$, we get 
\begin{align*}
\mathrm{Tr}(e_\gamma) &= \lim_{\varepsilon\searrow0}\widetilde{\varphi}((H^{-1/2})_\varepsilon\,e_\gamma\,(H^{-1/2})_\varepsilon)
= \lim_{\varepsilon\searrow0}\frac{\gamma^{-1}}{(1+\varepsilon\gamma^{-1/2})^2}\widetilde{\varphi}(e_\gamma) = \gamma^{-1}\varphi(1) = \gamma^{-1}.
\end{align*} 
Obviously the $\mathrm{Tr}$ is uniquely determined by $\mathrm{Tr}\circ E_\varphi = \mathrm{Tr}$ and $\mathrm{Tr}\!\upharpoonright_{\mathbb{C}\rtimes G}$. Hence we have obtained the assertion (1.2). 

The assertion (1.3) is seen as follows. We have 
$$
M\rtimes_{\sigma^{\varphi,\Gamma}}G = (M_d\rtimes_{\sigma^{\varphi,\Gamma}}G)\oplus(M_c\rtimes_{\sigma^{\varphi,\Gamma}}G) \cong (M_d\,\bar{\otimes}\,\ell^\infty(\Gamma))\oplus(M_c\rtimes_{\sigma^{\varphi,\Gamma}}G), 
$$
where the last isomorphism comes from \cite[Theorem X.1.7 (ii)]{Takesaki:Book}, since $\sigma_g^{\varphi,\Gamma}$ fixes any element in $\mathcal{Z}(M)$ and hence its restriction to $M_d$ must be inner for every $g \in G$ (see e.g.~\cite[Exercise V.1.4]{Takesaki:Book}). Then $M_d\,\bar{\otimes}\,\ell^\infty(\Gamma)$ is a direct sum of countably infinite copies of $M_d$, and $M_c\rtimes_{\sigma^{\varphi,\Gamma}}G$ is already known to be the discrete core of the diffuse factor summand $M_c$.  

Lastly we prove the assertion (1.4). The dual action $\theta$ is constructed by $\theta_\gamma(\lambda(g)) = \overline{\langle\gamma, g\rangle}\lambda(g)$ for every $\gamma \in \Gamma$ and every $g \in G = \widehat{\Gamma}$. As above 
$\lambda(g) = \sum_{\gamma' \in \Gamma} \langle \gamma', g\rangle\,e_{\gamma'}$ so that for arbitrary $\gamma,\gamma'\in\Gamma$ one has 
$$
\theta_\gamma(\lambda(g)) = \overline{\langle \gamma,g\rangle} \sum_{\gamma' \in \Gamma} \langle \gamma', g\rangle\,e_{\gamma'} = \sum_{\gamma' \in \Gamma} \langle \gamma^{-1}\gamma', g\rangle\,e_{\gamma'} = \sum_{\gamma' \in \Gamma} \langle\gamma',g\rangle\,e_{\gamma\gamma'}, 
$$  
implying $\theta_\gamma(e_{\gamma'}) = e_{\gamma\gamma'}$ by the uniqueness of spectral decomposition. It then follows that $\mathrm{Tr}\circ\theta_\gamma(e_{\gamma'}) = \mathrm{Tr}(e_{\gamma\gamma'}) = \gamma^{-1}\gamma'^{-1} = \gamma^{-1}\mathrm{Tr}(e_{\gamma'})$. Hence $\mathrm{Tr}\circ\theta_\gamma = \gamma^{-1}\mathrm{Tr}$ for every $\gamma \in \Gamma$, since $\theta_\gamma\!\upharpoonright_M = \mathrm{Id}_M$. 
\end{proof}

\subsection{First precise structural results}\label{SS2.3}  
Theorem \ref{T1} tells us that the analysis of the discrete core $\widehat{M_c}$ or equivalently the centralizer $(M_c)_{\varphi_c}$ essentially comes down to that of certain semifinite tracial amalgamated free products over atomic abelian von Neumann algebras. This is very fruitful; in fact, we will be able to prove the completion of \cite[Theorem 3]{Dykema:FieldsInstituteComm97}. The proof below is completely independent of \cite{Dykema:FieldsInstituteComm97}, but needs a recent attempt due to Redelmeier \cite{Redelmeier:arXiv:1207.1117} to generalize the previous works due to Dykema partly with him \cite{Dykema:DukeMathJ93},\cite{Dykema:AmerJMath95},\cite{Dykema:BLMS11},\cite{DykemaRedelmeier:arXiv11} on finite tracial (amalgamated) free products based on free probability machinery to the semifinite setting. The algorithm to get explicit algebraic structure in \cite{Redelmeier:arXiv:1207.1117} is unfortunately not so clear at all (though he tried to illustrate it by several explicit examples), but what we actually need here is only the next weak version of Redelmeier's assertion, since we have already known some details on the structure of $M\rtimes_{\sigma^{\sigma,\Gamma}}G$.

\begin{proposition}\label{P2} 
The class, denoted by $\mathcal{R}_4$, consisting of all countable direct sums of semifinite hyperfinite von Neumann algebras with separable preduals and {\rm(}at most countably infinite{\rm)} amplifications of interpolated free group factors is closed under taking semifinite tracial amalgamated free products over atomic type I von Neumann subalgebras. 
\end{proposition}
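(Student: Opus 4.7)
The plan is to reduce the amalgamated free product to an ordinary (non-amalgamated) semifinite tracial free product via a standard compression, and then to invoke the weak form of Redelmeier's theorem that is proved from scratch in \S4. The class $\mathcal{R}_4$ is manifestly closed under countable direct sums, so using the canonical decomposition of the amalgamated free product over the atomic center of $D$ (together with the central covers in each of $A$ and $B$ of the units of the blocks of $D$), it suffices to treat two algebras $A, B \in \mathcal{R}_4$ amalgamated over a single type $\mathrm{I}_n$ factor $D \cong M_n(\mathbb{C})$, with $n$ possibly infinite.

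For the main step, fix a minimal projection $e \in D$. The standard Voiculescu--Shlyakhtenko compression principle for amalgamated free products over a matrix algebra gives a canonical isomorphism $e(A *_D B) e \cong eAe \ast eBe$ onto an ordinary semifinite tracial free product. Both $eAe$ and $eBe$ remain in $\mathcal{R}_4$: compression of a semifinite hyperfinite algebra by a projection is semifinite hyperfinite, and compression of an amplification of an interpolated free group factor is again such, via the interpolation formula $L(\mathbb{F}_t)^s \cong L(\mathbb{F}_{1+(t-1)/s^2})$. Applying the result of \S4 then places $eAe \ast eBe$ in $\mathcal{R}_4$. Since $M := A *_D B$ contains $D = M_n(\mathbb{C})$ with $e$ a minimal projection of $D$, we have $M \cong M_n(\mathbb{C}) \otimes eMe$, and closure of $\mathcal{R}_4$ under (countable) matrix amplification yields $M \in \mathcal{R}_4$.

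The main obstacle is the semifinite bookkeeping: in the original amalgamated free product the tracial weight restricted to $D$ may be infinite, with different blocks of $D$ carrying different trace values, so one must track carefully how the various traces on $A$, $B$, their compressions by $e$, and the resulting ordinary free product are normalized, and in particular ensure that the Dykema-type free-dimension computation survives when replacing finite traces with semifinite ones. The author's \S4 is included precisely in order to handle this bookkeeping transparently and to provide the streamlined input needed for the application above; once that input is in hand, the compression-and-amplification argument sketched here is essentially routine.
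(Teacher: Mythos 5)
Your reduction step is where the argument breaks down. You claim that, because $\mathcal{R}_4$ is closed under countable direct sums, one may use ``the canonical decomposition of the amalgamated free product over the atomic center of $D$'' to reduce to amalgamation over a single type I factor block $D\cong M_n(\mathbb{C})$. No such decomposition exists: the minimal central projections of $D$ (and the central covers of the block units in $A$ and in $B$) are in general not central in $A\star_D B$, and the amalgamated free product over a multi-block atomic algebra does not split as a direct sum indexed by the blocks. Already $M_2(\mathbb{C})\star_{\mathbb{C}^2}M_2(\mathbb{C})$, amalgamated over the diagonal, is a single diffuse algebra rather than a sum of two pieces. Worse, the multi-block case is exactly the one the paper needs: in Theorem \ref{T1} the amalgam is $\mathbb{C}\rtimes G\cong\ell^\infty(\Gamma)$, an abelian atomic algebra with infinitely many minimal projections, and compressing by a single minimal projection $e_\gamma$ does \emph{not} produce an ordinary free product $eAe\star eBe$ --- untangling what it does produce is precisely the content of Dykema's work used in \S3 and of the inductive machinery of \S4. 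Your compression formula $e(A\star_D B)e\cong eAe\star eBe$ is fine when $D$ really is a single full matrix block with $e$ a minimal projection of $D$, but that situation almost never obtains here, so the whole argument collapses at the first step.

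There is also a circularity: the ``weak form of Redelmeier's theorem proved from scratch in \S4'' that you invoke to conclude is Proposition \ref{P2} itself (\S4 is titled ``A simplified proof of Proposition \ref{P2}''), so you cannot defer the remaining work to it; and since free products with respect to genuinely semifinite weights are not even defined without amalgamation, there is no independent non-amalgamated semifinite theorem available to cite in its place. (If $\mathrm{Tr}(e)<\infty$ the corners are finite and Dykema's finite tracial results could serve, but only after a correct reduction, which is missing.) For comparison, the paper's actual proof never attempts such a reduction: it first reduces to abelian atomic $D$, then builds compatible finite-dimensional exhaustions of the hyperfinite pieces along $D$ (Proposition \ref{P-a1}), introduces substandard embeddings and proves that $\mathcal{R}_4$ is stable under inductive limits along substandard inclusions (Proposition \ref{P-a5}), and assembles the general case by induction using the step lemmas \ref{L-a7}--\ref{L-a10}. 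To repair your approach you would need to replace the block-by-block splitting with an argument of this inductive type, or else engage directly with Dykema's algorithm for amalgamation over many atoms.
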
 

Only this assertion is not so hard to prove except some type II$_\infty$ problems, see \S4. Here is the main result of this subsection. 
 
\begin{theorem}\label{T3} Let $M_i$, $i=1,2$, be non-trivial von Neumann algebras with separable preduals and $\varphi_i$, $i=1,2$, be faithful normal states on them, respectively. Assume that at least one of the $\varphi_i$'s is not tracial, and further that each $(M_i,\varphi_i)$ is either 
\begin{itemize} 
\item[{\rm(i)}] $M_i$ is hyperfinite and $\varphi_i$ almost periodic{\rm;} 
\item[{\rm(ii)}] $M_i$ is an amplification of an interpolated free group factor and $\varphi_i$ almost periodic{\rm;}  
\item[{\rm(iii)}] $M_i$ is a full type III factor with admitting discrete decomposition whose discrete core is stably isomorphic to $L(\mathbb{F}_\infty)$, and $\varphi_i$ almost periodic{\rm;} or 
\item[{\rm(iv)}] a countable direct sum of pairs from {\rm(i)}--{\rm(iii)}. 
\end{itemize} 
Then the unique diffuse factor summand $M_c$ of the free product $(M,\varphi) = (M_1,\varphi_1)\star(M_2,\varphi_2)$ is of type III and admits discrete decomposition whose discrete core $\widehat{M_c}$ is stably isomorphic to $L(\mathbb{F}_\infty)$. Moreover, the centralizer $(M_c)_{\varphi_c}$ with $\varphi_c := \varphi\!\upharpoonright_{M_c}$ is also isomorphic to $L(\mathbb{F}_\infty)$. 
\end{theorem}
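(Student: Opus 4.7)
The plan is to use Theorem \ref{T1} to convert the question about the discrete core $\widehat{M_c}$ into one about the semifinite tracial amalgamated free product $M\rtimes_{\sigma^{\varphi,\Gamma}}G$ over the atomic abelian base $\mathbb{C}\rtimes G \cong \ell^\infty(\Gamma)$, and then invoke the closure Proposition \ref{P2}. First I would note that each of (i)--(iv) assumes $\varphi_i$ is almost periodic, so (since at least one is non-tracial) the discussion in \S\S\ref{SS2.1}--\ref{SS2.2} ensures that $M_c$ is a full type III factor admitting discrete decomposition with $\varphi_c$ almost periodic, that $((M_c)_{\varphi_c})'\cap M_c^\omega = \mathbb{C}$, and that $(M_c)_{\varphi_c}\bar{\otimes}B(\ell^2) \cong \widehat{M_c}$ by Eq.\eqref{Eq4}. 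This already handles the claims that $M_c$ is type III with discrete decomposition, and reduces the two remaining assertions to identifying $\widehat{M_c}$ up to stable isomorphism.

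Next I would verify that each crossed product $M_i\rtimes_{\sigma^{\varphi_i,\Gamma}}G$ belongs to $\mathcal{R}_4$, treating the cases in turn. In case (i), $M_i$ is hyperfinite and a crossed product of a hyperfinite von Neumann algebra by the compact abelian group $G$ remains hyperfinite; by Theorem \ref{T1} (1.2) it is semifinite. In case (ii), $M_i$ is semifinite, so the almost periodic modular density of $\varphi_i$ has the spectral decomposition $h = \sum_{\gamma\in\mathrm{Sp}_d(\Delta_{\varphi_i})}\gamma\,e_\gamma$, and setting $U_g := \sum_\gamma \langle\gamma,g\rangle e_\gamma$ one sees $\sigma_g^{\varphi_i,\Gamma} = \mathrm{Ad}(U_g)$ is inner for every $g \in G$; hence $M_i\rtimes_{\sigma^{\varphi_i,\Gamma}}G \cong M_i\,\bar{\otimes}\,\ell^\infty(\Gamma)$, a countable direct sum of amplifications of an interpolated free group factor. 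In case (iii), $M_i\rtimes_{\sigma^{\varphi_i,\Gamma}}G$ is by definition the discrete core $\widehat{M_i}$, which is stably isomorphic to $L(\mathbb{F}_\infty)$ by hypothesis. Case (iv) is immediate from the direct sum distributing across the crossed product.

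With these in hand, Theorem \ref{T1} realizes $M\rtimes_{\sigma^{\varphi,\Gamma}}G$ as the amalgamated free product of two algebras in $\mathcal{R}_4$ over the atomic type I base $\mathbb{C}\rtimes G$, so Proposition \ref{P2} places $M\rtimes_{\sigma^{\varphi,\Gamma}}G$ itself in $\mathcal{R}_4$. By Theorem \ref{T1} (1.3), $\widehat{M_c}$ is a central direct summand of this, hence also lies in $\mathcal{R}_4$; being a type II$_\infty$ factor, it must be either the hyperfinite II$_\infty$ factor or an amplification of some interpolated free group factor $L(\mathbb{F}_r)$ with $r \in (1,\infty]$. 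The hyperfinite possibility is excluded by fullness of $(M_c)_{\varphi_c}$: otherwise $(M_c)_{\varphi_c} \cong R$ would be McDuff and its non-trivial central sequences would give non-scalar elements of $((M_c)_{\varphi_c})'\cap M_c^\omega$, contradicting the property recalled above.

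The step I expect to be the main obstacle is pinning down that $r=\infty$, rather than merely $r\in(1,\infty]$. The approach is to extract from the construction underlying Proposition \ref{P2} (and its source material in \S\ref{SS2.3} together with \S4) enough quantitative control to compute the free dimension of the free group factor summand produced: since $\Gamma$ is infinite (it contains the non-trivial point spectra of at least one non-tracial $\varphi_i$), the amalgamation base $\ell^\infty(\Gamma)$ has infinitely many atoms, and Dykema-type free dimension calculations then force the resulting parameter to be $\infty$. Once $\widehat{M_c}$ is known to be stably isomorphic to $L(\mathbb{F}_\infty)$, the second assertion follows immediately from Eq.\eqref{Eq4} and the fact that every amplification of $L(\mathbb{F}_\infty)$ is again $L(\mathbb{F}_\infty)$ by the Dykema--R\u{a}dulescu formula.
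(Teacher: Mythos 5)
Your overall strategy (reduce via Theorem \ref{T1} to an amalgamated free product over $\mathbb{C}\rtimes G$, apply Proposition \ref{P2}, exclude the hyperfinite possibility using $((M_c)_{\varphi_c})'\cap M_c^\omega=\mathbb{C}$) is the same as the paper's, and those parts are fine. But there are two genuine gaps. First, in case (iii) you assert that $M_i\rtimes_{\sigma^{\varphi_i,\Gamma}}G$ ``is by definition the discrete core $\widehat{M_i}$.'' It is not: the discrete core of $M_i$ is the crossed product by the dual of $\Lambda:=\mathrm{Sd}(M_i)$ with respect to a suitable (extremal) almost periodic state, whereas here the crossed product is by $G=\widehat{\Gamma}$ with $\Gamma$ generated by the point spectra of \emph{both} $\Delta_{\varphi_1}$ and $\Delta_{\varphi_2}$ (so typically $\Lambda\subsetneq\Gamma$), and $\varphi_i$ is an arbitrary almost periodic state, not necessarily extremal. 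Showing that this crossed product lies in $\mathcal{R}_4$ is exactly the nontrivial content of the paper's case (iii): one writes $M_i=(L(\mathbb{F}_\infty)\,\bar{\otimes}\,B(\ell^2))\rtimes_\rho\Lambda$, produces an extremal $\Lambda$-almost periodic state $\varphi_{i0}$ whose centralizer is $L(\mathbb{F}_\infty)$, replaces $\varphi_i$ by $\varphi_{i0}$ via \cite[Lemma 4.2]{Connes:JFA74} and \cite[Theorem X.1.7 (ii)]{Takesaki:Book}, and then invokes Lemma \ref{L5} (crossed product by a compact group whose fixed-point algebra is irreducible is a direct sum of amplifications of that fixed-point algebra). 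Your one-line justification skips all of this. (A smaller remark: in case (ii) your unitaries $U_g=\sum_\gamma\langle\gamma,g\rangle e_\gamma$ require the eigenvalues of the density of $\varphi_i$ to lie in $\Gamma$, which is only true after rescaling the density, since $\Gamma$ is generated by \emph{ratios} of those eigenvalues; the paper instead deduces innerness of $\sigma_g^{\varphi_i,\Gamma}$ from fullness. Either route works once stated correctly.)

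Second, your treatment of $r=\infty$ is not a proof: you only express the hope that a Dykema-type free dimension computation, driven by the fact that $\ell^\infty(\Gamma)$ has infinitely many atoms, would force the parameter to be infinite, but you give no argument, and it is not at all automatic that infinitely many atoms in the base force an infinite parameter (the contributions to such computations can be summable). The paper settles this step differently and cheaply: by Theorem \ref{T1} (1.4) the dual action $\theta$ scales the canonical trace of $\widehat{M_c}$ by $\gamma^{-1}$ for every $\gamma\in\Gamma$, so once $\widehat{M_c}$ is known to be stably isomorphic to some $L(\mathbb{F}_r)$, the fundamental group of that $L(\mathbb{F}_r)$ contains $\Gamma\neq\{1\}$, and the Dykema--R\u{a}dulescu dichotomy \cite[Corollary 4.7]{Radulescu:InventMath94} forces $r=\infty$. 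You should replace your free-dimension sketch by this trace-scaling argument (your final sentence, deducing the statement about $(M_c)_{\varphi_c}$ from Eq.\eqref{Eq4} once $\widehat{M_c}\cong L(\mathbb{F}_\infty)\,\bar{\otimes}\,B(\ell^2)$, is fine).
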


Remark again that faithful normal tracial states and periodic ones are particular cases of almost periodic ones by definition. The above theorem and Dykema's previous type II$_1$ results \cite{Dykema:DukeMathJ93} immediately imply the following corollary: 

\begin{corollary}\label{C4} The class of countable direct sums of hyperfinite von Neumann algebras with separable preduals, {\rm(}at most countably infinite{\rm)} amplifications of interpolated free group factors and full type III factors with separable preduals that admit discrete decomposition with discrete core $L(\mathbb{F}_\infty)\,\bar{\otimes}\,B(\ell^2)$ is closed under taking free products with respect to arbitrary almost periodic states. The resulting diffuse factor summand $M_c$ is either $L(\mathbb{F}_r)$ or $(L(\mathbb{F}_\infty)\,\bar{\otimes}\,B(\ell^2))\rtimes\mathrm{Sd}(M_c)$, or other words, the centralizer $(M_c)_{\varphi_c}$ is always $L(\mathbb{F}_r)$ {\rm(}with $r = \infty$ as long as $M_c$ is not finite{\rm)}.    
\end{corollary}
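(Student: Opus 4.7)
The plan is to dispose of the tracial case by invoking Dykema's closure theorem, and to reduce the non-tracial case to Theorem \ref{T3} together with the discrete decomposition Eq.\eqref{Eq3} and the stable isomorphism Eq.\eqref{Eq4}.

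Suppose first that both $\varphi_i$ are tracial. Then by \S\S\ref{SS2.1} the diffuse factor summand $M_c$ is of type II$_1$. Since the stated class contains no type III algebra that can support a tracial state, each $M_i$ is then a direct sum of finite hyperfinite algebras and amplifications of interpolated free group factors, and the closure of this smaller class under tracial free products together with the conclusion $M_c \cong L(\mathbb{F}_r)$ is exactly what Dykema proved in \cite{Dykema:DukeMathJ93} (and its companions). In this subcase the centralizer is $M_c$ itself, which is an interpolated free group factor as claimed.

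Suppose now that at least one $\varphi_i$ is non-tracial. Since both states are almost periodic by hypothesis, the input list (i)--(iv) of Theorem \ref{T3} matches the stated class precisely, once one notes that for a II$_\infty$ factor being stably isomorphic to $L(\mathbb{F}_\infty)$ coincides with being isomorphic to $L(\mathbb{F}_\infty)\bar{\otimes} B(\ell^2)$. Theorem \ref{T3} then yields a full type III factor $M_c$ admitting discrete decomposition with $\widehat{M_c}\cong L(\mathbb{F}_\infty)\bar{\otimes} B(\ell^2)$; plugging this into the Takesaki-dual crossed product Eq.\eqref{Eq3} gives $M_c \cong (L(\mathbb{F}_\infty)\bar{\otimes} B(\ell^2))\rtimes\mathrm{Sd}(M_c)$, exhibiting $M_c$ inside the stated class. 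The stable isomorphism Eq.\eqref{Eq4} then reads $(M_c)_{\varphi_c}\bar{\otimes} B(\ell^2) \cong L(\mathbb{F}_\infty)\bar{\otimes} B(\ell^2)$, so $(M_c)_{\varphi_c}$ is a II$_1$ factor stably isomorphic to $L(\mathbb{F}_\infty)$; invoking R\u{a}dulescu--Voiculescu's amplification invariance $L(\mathbb{F}_\infty)^t\cong L(\mathbb{F}_\infty)$ for every $t>0$, this forces $(M_c)_{\varphi_c}\cong L(\mathbb{F}_\infty)$.

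The corollary is thus essentially a packaging of Theorem \ref{T3} and Dykema's tracial result through the canonical discrete decomposition, so I do not anticipate any genuine obstacle. The only step that warrants a brief mention is the descent from stable isomorphism to isomorphism at the level of the centralizer, which is immediate from the peculiar amplification invariance of $L(\mathbb{F}_\infty)$; all the remaining bookkeeping is of the routine sort handled already in \S\S\ref{SS2.1}--\S\S\ref{SS2.2}.
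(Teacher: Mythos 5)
Your proposal is correct and follows essentially the same route as the paper, which derives Corollary \ref{C4} immediately from Theorem \ref{T3} (via Eq.\eqref{Eq3}--\eqref{Eq4} and the amplification invariance of $L(\mathbb{F}_\infty)$) in the non-tracial case and from Dykema's type II$_1$ results \cite{Dykema:DukeMathJ93} in the tracial case. Your unwinding of these ingredients, including the identification of ``stably isomorphic to $L(\mathbb{F}_\infty)$'' with ``isomorphic to $L(\mathbb{F}_\infty)\,\bar{\otimes}\,B(\ell^2)$'' for the II$_\infty$ discrete core, matches the intended argument.
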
  

It is unclear, at the present moment, whether or not the above class of von Neumann algebras is smallest among such ones including all the hyperfinite von Neumann algebras, because we do not know whether or not any type III factor with discrete core isomorphic to $L(\mathbb{F}_\infty)\,\bar{\otimes}\,B(\ell^2)$ can arise as a free product von Neumann algebra of hyperfinite ones. Perhaps we may replace full type III factors with separable preduals that admit discrete decomposition with discrete core $L(\mathbb{F}_\infty)\,\bar{\otimes}\,B(\ell^2)$ by almost periodic free Araki--Woods factors. Thus this issue is closely related to the question \cite[\S\S5.4]{Ueda:AdvMath11}. One possible and `ideal' way to answering this is probably an in-depth study of trace-scaling actions of countable discrete subgroups of $\mathbb{R}_+^\times$ on $L(\mathbb{F}_\infty)\,\bar{\otimes}\,B(\ell^2)$. Such a study must be hard and apparently needs completely new ideas, though several isomorphism  results on free Araki--Woods factors indirectly suggest its possibility.  

\medskip
The proof of Theorem \ref{T3} needs the next simple lemma. 

\begin{lemma}\label{L5} Let $N$ be a von Neumann algebra and $\alpha$ be a {\rm(}not necessarily faithful{\rm)} action on $N$ of a compact abelian group $G$ with Haar probability measure $dg$. Assume that the fixed-point algebra $N^\alpha$ is irreducible in $N$, that is, $(N^\alpha)'\cap N = \mathbb{C}$. Then the crossed product $N\rtimes_\alpha G$ is a {\rm(}possibly countably infinite{\rm)} direct sum of amplifications of $N^\alpha$. 
\end{lemma}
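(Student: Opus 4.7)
My plan is to reduce the structure of $N\rtimes_\alpha G$ to the corners cut by the minimal projections of $L(G)$ and then assemble these corners into amplifications of $N^\alpha$. I will begin by noting that $N^\alpha$ is automatically a factor, since $\mathcal{Z}(N^\alpha) \subseteq (N^\alpha)' \cap N = \mathbb{C}$. Just as in Theorem~\ref{T1}, the abelian subalgebra $L(G) \subseteq N\rtimes_\alpha G$ has minimal projections $e_\gamma := \int_G \overline{\langle\gamma, g\rangle}\lambda(g)\,dg$ indexed by $\gamma \in \hat G$, summing to $1$ and satisfying $\lambda(g) e_\gamma = \langle\gamma, g\rangle e_\gamma$. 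With $E : N \to N^\alpha$ the Haar-averaging conditional expectation $E(x) := \int_G \alpha_g(x)\,dg$, a direct calculation will give $e_\gamma\,x\lambda(g)\,e_\gamma = \langle\gamma, g\rangle E(x)\,e_\gamma$, so that $e_\gamma (N\rtimes_\alpha G) e_\gamma = N^\alpha e_\gamma$, with canonical $\ast$-isomorphism $y \mapsto y e_\gamma : N^\alpha \to N^\alpha e_\gamma$.

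Next I would pin down the center. Any $z \in \mathcal{Z}(N\rtimes_\alpha G)$ commutes with each $e_\gamma$, giving $z = \sum_\gamma e_\gamma z e_\gamma$; each summand lies in $N^\alpha e_\gamma$ and also commutes with the whole corner (because $z$ commutes with $N$), so it belongs to $\mathcal{Z}(N^\alpha) e_\gamma = \mathbb{C} e_\gamma$. Hence $\mathcal{Z}(N\rtimes_\alpha G) \subseteq L(G) = \ell^\infty(\hat G)$ is a von Neumann subalgebra of an atomic algebra, so it is itself atomic; write $\mathcal{Z}(N\rtimes_\alpha G) = \bigoplus_j \mathbb{C} p_j$ with minimal central projections $p_j = \sum_{\gamma \in S_j} e_\gamma$ for a partition $\hat G = \bigsqcup_j S_j$.

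Finally, for each $j$, $F_j := p_j (N\rtimes_\alpha G)$ is a factor in which every $e_\gamma$ ($\gamma \in S_j$) has corner $\cong N^\alpha$ and central support $1_{F_j}$. I will identify $F_j$ with $N^\alpha\,\bar{\otimes}\,B(\ell^2(S_j))$ by building matrix units $\{v_{\gamma,\gamma'}\}_{\gamma,\gamma' \in S_j}$ in $F_j$ with $v_{\gamma,\gamma'}^* v_{\gamma,\gamma'} = e_{\gamma'}$ and $v_{\gamma,\gamma'} v_{\gamma,\gamma'}^* = e_\gamma$. The main obstacle is the spectral-theoretic input that, under the irreducibility assumption $(N^\alpha)' \cap N = \mathbb{C}$, each nonzero spectral subspace $N_\delta := \{x \in N : \alpha_g(x) = \langle\delta, g\rangle x\}$ contains a unitary $u_\delta$ of $N$ — this is a classical consequence of Connes--Landstad spectral theory for irreducible compact-group actions on factors, reflecting the fact that the $N^\alpha$-$N^\alpha$-bimodules $N_\delta$ are of rank one. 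Granted this, $H := \mathrm{sp}(\alpha)$ is a subgroup of $\hat G$; the observation that $\sum_{\gamma \in S} e_\gamma$ commutes with every $N_\delta$ ($\delta \in H$) iff $HS = S$ shows that the $S_j$ are exactly the $H$-cosets in $\hat G$; and the direct formula $u_\delta \lambda(g) u_\delta^* = \langle\delta^{-1}, g\rangle \lambda(g)$ yields $u_\delta e_\gamma u_\delta^* = e_{\gamma\delta}$, so $v_{\gamma,\gamma'} := u_{\gamma\gamma'^{-1}} e_{\gamma'}$ is the required partial isometry whenever $\gamma, \gamma'$ lie in the same $S_j$. Assembling the central summands produces $N\rtimes_\alpha G \cong \bigoplus_j N^\alpha \,\bar{\otimes}\, B(\ell^2(S_j))$, as claimed.
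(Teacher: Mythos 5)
Your first two paragraphs are correct and essentially reproduce the paper's own argument: the corner computation $e_\gamma(N\rtimes_\alpha G)e_\gamma=N^\alpha e_\gamma$ is exactly the trick (going back to Haga) used in the paper, and your direct verification that $\mathcal{Z}(N\rtimes_\alpha G)\subseteq L(G)\cong\ell^\infty(\widehat{G})$, hence is atomic, is a legitimate replacement for the paper's citations. The genuine gap is in the third paragraph: the asserted ``classical consequence of Connes--Landstad theory'' that every nonzero spectral subspace $N_\delta$ contains a \emph{unitary} of $N$ is false under the hypothesis $(N^\alpha)'\cap N=\mathbb{C}$. The standard maximality argument (using that $N^\alpha$ is a factor, so the left and right supports of $N_\delta$ equal $1$) only produces a partial isometry $v\in N_\delta$ with $v^*v=1$ \emph{or} $vv^*=1$, and in general one cannot do better. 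Indeed, in the very situation where this lemma is applied in the paper (Theorem \ref{T3}, case (iii)), $\alpha$ extends the modular group of an almost periodic \emph{state} $\varphi$ and $N_\delta$ is the eigenspace $\{x\in N:\sigma_t^{\varphi}(x)=\delta^{\sqrt{-1}t}x\}$; a unitary $u\in N_\delta$ would give $\varphi(uu^*)=\delta^{\pm1}\varphi(u^*u)$ by the KMS condition, i.e.\ $1=\delta^{\pm1}$, so $N_\delta$ contains no unitary for $\delta\neq1$ (the Powers factor with its periodic state is a concrete instance). Correspondingly, the matrix units you want cannot exist: by Theorem \ref{T1} (1.2) the projections $e_\gamma$ have trace $\gamma^{-1}$, so distinct $e_\gamma$'s lying under the same minimal central projection are in general \emph{inequivalent}, and the identification of $F_j$ with $N^\alpha\,\bar{\otimes}\,B(\ell^2(S_j))$ with the $e_\gamma$ as a diagonal of mutually equivalent projections, as well as the coset description of the $S_j$ via $u_\delta e_\gamma u_\delta^*=e_{\gamma\delta}$, breaks down.

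Fortunately the lemma needs none of this. After your second paragraph you know that each $F_j:=p_j(N\rtimes_\alpha G)$ is a factor containing a projection $e_\gamma$ (any $\gamma\in S_j$) with $e_\gamma F_j e_\gamma\cong N^\alpha$; since in a factor the central support of $e_\gamma$ is $1_{F_j}$, one covers $1_{F_j}$ by countably many projections each equivalent to a subprojection of $e_\gamma$ ($\sigma$-finiteness), which realizes $F_j$ as a corner of $N^\alpha\,\bar{\otimes}\,B(\ell^2)$, i.e.\ an amplification of $N^\alpha$. This comparison-theory one-liner is precisely how the paper concludes, so the correct proof is obtained by deleting your third paragraph and replacing it with this remark.
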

\begin{proof} By e.g.~\cite[Appendix]{Ueda:IMRN00} we have $\mathcal{Z}(N\rtimes_\alpha G) = \mathbb{C}1_N\,\bar{\otimes}\,C$ with a certain von Neumann subalgebra $C$ of $L(G)$. Since $L(G) \cong \ell^\infty(\widehat{G})$ is  atomic and abelian, so is $C$ too (see e.g.~\cite[Proposition 1.1]{Tomiyama:JFA72}). Choose each minimal (in $C$) projection $z \in C$. Then one can choose a minimal projection $e_\gamma = \int_G \overline{\langle g,\gamma\rangle}\lambda(g)\,dg \in \mathbb{C}\rtimes G = \mathbb{C}1_N\bar{\otimes}L(G)$ labelled by $\gamma \in \widehat{G}$ so that $e_\gamma \leq z$. One has
\begin{align*} 
e_\gamma x e_\gamma 
&= \int_{G\times G} \overline{\langle g_1 g_2,\gamma \rangle} \alpha_{g_1}(x)\lambda(g_1 g_2)\,dg_1 dg_2 \\
&= \int_G \left[\int_G \alpha_{g_1}(x)\,\overline{\langle g_2,\gamma\rangle} \lambda(g_2)\,dg_2\right]\,dg_1 = E_{N^\alpha}(x)e_\gamma
\end{align*} 
for every $x \in N \subseteq N\rtimes_\alpha G$, where $E_{N^\alpha}(-) = \int_G \alpha_g(-)\,dg$ is the canonical faithful normal conditional expectation from $N$ onto $N^\alpha$. Hence $e_\gamma (M\rtimes_\alpha G) e_\gamma = N^\alpha e_\gamma \cong N^\alpha$ is immediate. This argument is borrowed from \cite[Theorem 3.1]{Haga:TohokuMathJ76}. Hence $(N\rtimes_\alpha G)z$ is stably isomorphic to $e_\gamma (N\rtimes_\alpha G) e_\gamma \cong N^\alpha$, since $z$ is the central support of $e_\gamma$ in $N\rtimes_\alpha G$. 
\end{proof}

Here is the proof of Theorem \ref{T3}. 

\begin{proof} {\rm(Theorem \ref{T3})} Let us first prove that both $M_i\rtimes_{\sigma^{\varphi_i,\Gamma}}\Gamma$, $i=1,2$, fall in the class $\mathcal{R}_4$, whose definition was given in Proposition \ref{P2}. 

Firstly, assume that $(M_i,\varphi_i)$ is of the form (i). Since $G$ is abelian, $M_i\rtimes_{\sigma^{\varphi_i,\Gamma}}G$ must be hyperfinite by Connes's theorem (see e.g~\cite[Theorem XV.3.16]{Takesaki:Book}) so that it falls in the class $\mathcal{R}_4$, since we have known that it is semifinite, see Theorem \ref{T1}. 

Secondly, assume that $(M_i,\varphi_i)$ is of the form (ii). Then $\sigma_t^{\varphi}$ is inner for every $t \in \mathbb{R}$, and hence so is $\sigma_g^{\varphi_i,\Gamma}$ for every $g \in G$ since $M_i$ is full, that is, $\mathrm{Int}(M_i)$ is closed. Therefore, by (the proof of) \cite[Lemma 4.2]{Connes:JFA74} and \cite[Theorem X.1.7 (ii)]{Takesaki:Book} one has $M_i\rtimes_{\sigma^{\varphi_i,\Gamma}}G \cong M_i\,\bar{\otimes}\,L(G) \cong M_i\,\bar{\otimes}\,\ell^\infty(\Gamma)$, which falls in the class $\mathcal{R}_4$. 

Thirdly, assume that $(M_i,\varphi_i)$ is of the form (iii). By assumption we may and do assume that $M_i = (L(\mathbb{F}_\infty)\,\bar{\otimes}\,B(\ell^2))\rtimes_\rho\Lambda$ with $\Lambda = \mathrm{Sd}(M_i)$ and $\mathrm{Tr}\circ\rho_\lambda = \lambda^{-1}\mathrm{Tr}$ ($\lambda \in \Lambda$), where $\mathrm{Tr}$ is a faithful normal semifinite tracial weight on $L(\mathbb{F}_\infty)\,\bar{\otimes}\,B(\ell^2)$. We may and do also assume that $e := 1\otimes e_0 \in L(\mathbb{F}_\infty)\,\bar{\otimes}\,B(\ell^2)$ with minimal $e_0 \in B(\ell^2)^p$ takes $\mathrm{Tr}(e) =1$. Let $E : M_i \to L(\mathbb{F}_\infty)\,\bar{\otimes}\,B(\ell^2)$ be the canonical conditional expectation, and set $\psi := \mathrm{Tr}\circ E\!\upharpoonright_{eM_i e}$ with the above $e$. By \cite[Lemma X.1.13, Theorem X.1.17]{Takesaki:Book} one can easily see that $\psi$ is extremal $\Lambda$-almost periodic and $(eM_i e)_\psi = L(\mathbb{F}_\infty)\,\bar{\otimes}\,\mathbb{C}e_0 \cong L(\mathbb{F}_\infty)$. Since $eM_i e \cong M_i$, we get an extremal $\Lambda$-almost periodic state $\varphi_{i0}$ on $M_i$ whose centralizer $(M_i)_{\varphi_{i0}}$ is isomorphic to $L(\mathbb{F}_\infty)$. By definition $\Lambda$ must sit in $\Gamma$, and hence $\varphi_{i0}$ is extremal $\Gamma$-almost periodic too. By \cite[Lemma 4.2]{Connes:JFA74} and \cite[Theorem X.1.7 (ii)]{Takesaki:Book} we get $M_i\rtimes_{\sigma^{\varphi_i,\Gamma}}G \cong M_i\rtimes_{\sigma^{\varphi_{i0},\Gamma}}G$, which is a direct sum of amplifications of $(M_i)^{\sigma^{\varphi_{i0},\Gamma}} = (M_i)_{\varphi_{i0}} \cong L(\mathbb{F}_\infty)$ thanks to Lemma \ref{L5}. Therefore, $M_i\rtimes_{\sigma^{\varphi_i,\Gamma}}G$ falls in the class $\mathcal{R}_4$. 

Finally, assume that $(M_i,\varphi_i)$ is of the form (vi). Clearly $M_i\rtimes_{\sigma^{\varphi_i,\Gamma}}G$ is a direct sum of crossed products by $G$ considered above, since each $\sigma_g^{\varphi_i,\Gamma}$ fixes any element of $\mathcal{Z}(M_i)$. Consequently, $M_i\rtimes_{\sigma^{\varphi_i,\Gamma}}G$ falls in the class $\mathcal{R}_4$.  

By Theorem \ref{T1} Eq.\eqref{Eq5} together with the above discussion we observe that $M\rtimes_{\sigma^{\varphi,\Gamma}}G$ is a semifinite tracial amalgamated free product von Neumann algebra of two algebras from the class $\mathcal{R}_4$ with amalgamation over an atomic abelian von Neumann subalgebra, and hence Proposition \ref{P2} implies that $M\rtimes_{\sigma^{\varphi,\Gamma}}G$ falls again in the class $\mathcal{R}_4$ so that by Theorem \ref{T1} (1.3) the discrete core $\widehat{M_c}$ is stably isomorphic to either the unique hyperfinite type II$_1$ factor or an interpolated free group factor. However, we have already known that $(M_c)_{\varphi_c}$ has no central sequence as remarked in \S\S2.1, and hence by Eq.\eqref{Eq4} $\widehat{M_c}$ must be stably isomorphic to an interpolated free group factor $L(\mathbb{F}_r)$. By means of discrete decomposition the fundamental group of this $L(\mathbb{F}_r)$ must contain $\Gamma \neq \{1\}$, and thus $r=\infty$ by the famous dichotomy due to Dykema and R\u{a}dulescu, see \cite[Corollary 4.7]{Radulescu:InventMath94}. 
\end{proof} 

\subsection{General results on discrete cores associated with free products}\label{SS2.4} The purpose of this subsection is to clarify the present state of our knowledge on arbitrary free product von Neumann algebras. The consequences given below follow from several existing results e.g.~\cite{ChifanHoudayer:DukeMathJ10},\cite{HoudayerVaes:JMathPuresAppl1x},\cite{BoutonnetHoudayerRaum:Preprint12} (this list is not complete at all) in the deformation and rigidity theory with the help of our previous works \cite{Ueda:AdvMath11},\cite{Ueda:MRL11} and the present one. 

A recent work due to Boutonnet, Houdayer and Raum \cite{BoutonnetHoudayerRaum:Preprint12} and the results so far in the present paper altogether imply the next proposition. It is proved by the same argument of \cite[Theorem A]{BoutonnetHoudayerRaum:Preprint12} with replacing the continuous core, \cite[Theorem 5.1]{Ueda:PacificJMath99}, and \cite[Theorem 5.5]{HoudayerRicard:AdvMath11} by the discrete core, Theorem \ref{T1}, and Theorem \ref{T3} with Voiculescu's result \cite{Voiculescu:GAFA96}, respectively. We do give its proof for the sake of completeness. 

\begin{proposition}\label{P6} Let $M_i$, $i=1,2$, be non-trivial von Neumann algebras with separable preduals and $\varphi_i$, $i=1,2$, be almost periodic  states on them, respectively, such that at least one of them is not tracial. Then both the discrete core $\widehat{M}_c$ and the centralizer $(M_c)_{\varphi_c}$ with $\varphi_c := \varphi\!\upharpoonright_{M_c}$ of the resulting unique type III factor summand $M_c$ of the free product $(M,\varphi) = (M_1,\varphi_1)\star(M_2,\varphi_2)$ do never have any Cartan subalgebra.
\end{proposition}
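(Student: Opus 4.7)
The plan is to transplant the argument of \cite[Theorem A]{BoutonnetHoudayerRaum:Preprint12} from the continuous to the discrete setting, feeding in Theorem \ref{T1} in place of \cite[Theorem 5.1]{Ueda:PacificJMath99} and Theorem \ref{T3} together with Voiculescu's theorem \cite{Voiculescu:GAFA96} in place of \cite[Theorem 5.5]{HoudayerRicard:AdvMath11}. Since the stable isomorphism Eq.\eqref{Eq4} identifies Cartan subalgebras of the centralizer with those of the discrete core, it is enough to rule out Cartan subalgebras in $\widehat{M_c}$. The discrete core being semifinite, it is in turn enough to rule them out in every finite-trace corner $p\widehat{M_c}p$.

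First I would invoke Theorem \ref{T1} to present $M\rtimes_{\sigma^{\varphi,\Gamma}}G$ as a tracial amalgamated free product
$$
\big(M_1\rtimes_{\sigma^{\varphi_1,\Gamma}}G\big)\,\star_{\mathbb{C}\rtimes G}\,\big(M_2\rtimes_{\sigma^{\varphi_2,\Gamma}}G\big)
$$
over the atomic abelian algebra $\mathbb{C}\rtimes G \cong \ell^\infty(\Gamma)$, with $\widehat{M_c}$ a central summand by (1.3). Cornering by $p := q\cdot e_\gamma$, where $q$ is the central projection onto $\widehat{M_c}$ and $e_\gamma$ is a minimal projection of $\mathbb{C}\rtimes G$ of finite trace $\gamma^{-1}$, I obtain a II$_1$ factor realized as a corner of this amalgamated free product.

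Next I would apply the amalgamated free product dichotomy of Houdayer--Vaes \cite{HoudayerVaes:JMathPuresAppl1x} (which is what drives BHR's argument) to any would-be Cartan subalgebra $A \subseteq p\widehat{M_c}p$: either (a) $A \prec_{\widehat{M_c}} \mathbb{C}\rtimes G$ in Popa's sense, or (b) $p\widehat{M_c}p$ (being the normalizer algebra) is amenable relative to $M_i\rtimes_{\sigma^{\varphi_i,\Gamma}}G$ inside $M\rtimes_{\sigma^{\varphi,\Gamma}}G$ for some $i \in \{1,2\}$. Case (a) is ruled out instantly: $\ell^\infty(\Gamma)$ is atomic while $A$ is diffuse, so no such intertwining can exist.

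Case (b) is the main obstacle and is where Theorem \ref{T3} combined with Voiculescu's theorem steps in, exactly as \cite[Theorem 5.5]{HoudayerRicard:AdvMath11} did in the BHR argument. The idea is to produce, inside each $M_i$, a diffuse almost periodic hyperfinite subalgebra whose induced almost periodic free product sub-situation has, by Theorem \ref{T3}, discrete core stably isomorphic to $L(\mathbb{F}_\infty)$; then to verify that relative amenability of $p\widehat{M_c}p$ over $M_i\rtimes G$ descends to relative amenability over a hyperfinite subalgebra sitting inside this interpolated free group factor situation. Voiculescu's theorem \cite{Voiculescu:GAFA96} asserts that $L(\mathbb{F}_r)$ admits no Cartan subalgebra, and the Ozawa--Popa type strong solidity machinery that underlies it actually precludes the relative amenability produced in case (b) (the fullness $M_c' \cap M_c^\omega = \mathbb{C}$ recorded in \S\S2.1 is the ingredient ensuring that $\widehat{M_c}$ itself is non-amenable, so the relative amenability cannot collapse trivially). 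This contradiction closes case (b) and finishes the proof; the hardest step is precisely the propagation of relative amenability into the hyperfinite sub-situation, which is where BHR's original argument has to be followed with care in the discrete guise.
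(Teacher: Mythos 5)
Your overall frame is right (pass to the discrete core via Eq.~\eqref{Eq4}, write $M\rtimes_{\sigma^{\varphi,\Gamma}}G$ as an amalgamated free product over $\mathbb{C}\rtimes G\cong\ell^\infty(\Gamma)$ by Theorem \ref{T1}, and kill a would-be Cartan subalgebra by intertwining a diffuse algebra into an atomic one), and your disposal of branch (a) is exactly the paper's final contradiction. But your treatment of branch (b) -- the relative amenability branch -- is where there is a genuine gap, and it is the heart of the matter. Relative amenability of $p\widehat{M}p$ over $M_i\rtimes_{\sigma^{\varphi_i,\Gamma}}G$ does \emph{not} ``descend'' to relative amenability over a smaller (hyperfinite) subalgebra: amenability relative to a subalgebra is a \emph{stronger} condition, so your proposed propagation step goes in the wrong direction. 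Moreover Voiculescu's theorem \cite{Voiculescu:GAFA96} is a free-entropy result giving absence of Cartan subalgebras in $L(\mathbb{F}_r)$; it carries no Ozawa--Popa-type machinery that would preclude relative amenability, and the fullness $M_c'\cap M_c^\omega=\mathbb{C}$ only yields non-amenability of $\widehat{M_c}$, which contradicts branch (b) only when the target $M_i\rtimes G$ is itself amenable. So as written, case (b) is not closed except in the fully amenable situation, where the detour is unnecessary anyway.

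The paper resolves this by a case split you are missing. If both $M_i$ are amenable, Theorem \ref{T3} says the centralizer is literally $L(\mathbb{F}_\infty)$ and Voiculescu finishes immediately -- this is the \emph{only} place Theorem \ref{T3} enters. Otherwise, one first cuts by the unique central projection $z\in\mathcal{Z}(M_1)$ (the Claim plus the reduction trick of \S\S\ref{SS2.1}) to assume $M_1$ has no amenable direct summand, then checks the two hypotheses of \cite[Theorem 5.1]{BoutonnetHoudayerRaum:Preprint12}: (i) $M_1\rtimes_{\sigma^{\varphi_1,\Gamma}}G$ has no amenable direct summand (via the dual action and Takesaki duality), and (ii) $e(M_2\rtimes_{\sigma^{\varphi_2,\Gamma}}G)e\supsetneq(\mathbb{C}\rtimes G)e$ for every non-zero projection $e\in\mathbb{C}\rtimes G$ (via $(M_2)_{\varphi_2}\neq\mathbb{C}$, using almost periodicity). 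Under these hypotheses BHR's theorem outputs the intertwining $vDv^*\preceq\mathbb{C}\rtimes G$ outright -- the relative-amenability branch is eliminated \emph{inside} that theorem by the no-amenable-summand hypothesis, not by anything resembling your step. To repair your proposal you would need to supply this reduction and hypothesis verification (or an independent argument excluding relative amenability of $p\widehat{M}p$ over a non-amenable $M_i\rtimes G$), which is precisely the content you have left unproved.
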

\begin{proof} When both the $M_i$ are amenable (or equivalently hyperfinite), the desired assertion is immediate from  Voiculescu's result \cite{Voiculescu:GAFA96} with the help of Theorem \ref{T3}. Hence we may and do assume that $M_1$ is not amenable. Here is a standard claim. Its proof is left to the reader. 

\medskip\noindent
{\bf Claim:} There exists a unique non-zero projection $z \in \mathcal{Z}(M_1)$ such that $M_1 (1-z)$ is amenable but $M_1 e$ not for every non-zero projection $e \in \mathcal{Z}(M_1)$ with $e \leq z$.   

\medskip
As remarked at the end of \S\S2.1 $z \leq 1_{M_c}$, $M_c \cong zMz$ (since $M_c$ is of type III in this case), and $(zMz,\psi)$ with $\psi :=  \varphi(z)^{-1}\varphi\!\upharpoonright_{zMz}$ is the free product of $(M_1 z, \varphi_1(z)^{-1}\varphi_1\!\upharpoonright_{M_1 z})$ and non-trivial something. Since $z$ falls into $M_\varphi$, it is plain to see that $(zMz)_\psi = z((M_c)_{\varphi_c})z$. Hence, it is easy to confirm that $(M_c)_{\varphi_c}$ had a Cartan subalgebra if and only if so does $(zMz)_\psi = z((M_c)_{\varphi_c})z$. Since $z$ falls into $M_\varphi$ again, it is also easy to see that $\psi$ is almost periodic, and so are both the states in the free product description of $(zMz,\psi)$ by \cite[Corollary 3.1]{Ueda:MRL11}. Consequently, we may and do further assume that $M_1$ has no amenable summand. Note that $M = M_c$ and $\varphi = \varphi_c$ in the case. 

By Theorem \ref{T1} the discrete core $\widehat{M} = M\rtimes_{\sigma^{\varphi,\Gamma}}G$ is the semifinite amalgamated free product of $M_1\rtimes_{\sigma^{\varphi_1,\Gamma}}G$ and $M_2\rtimes_{\sigma^{\varphi_2,\Gamma}}G$ over $\mathbb{C}\rtimes G$. Since $M_1\rtimes_{\sigma^{\varphi_1,\Gamma}}G$ is not amenable due to Connes's result (see e.g.~\cite[Theorem XV.3.16]{Takesaki:Book}), we can apply the above claim to $M_1\rtimes_{\sigma^{\varphi_1,\Gamma}}G$ instead of $M_1$ and get a unique projection $\hat{z} \in \mathcal{Z}(M_1\rtimes_{\sigma^{\varphi_1,\Gamma}}G)$ such that $(M_1\rtimes_{\sigma^{\varphi_1,\Gamma}}G)(1-\hat{z})$ is amenable but $(M_1\rtimes_{\sigma^{\varphi_1,\Gamma}}G)e$ not for every non-zero projection $e \in \mathcal{Z}(M_1\rtimes_{\sigma^{\varphi_1,\Gamma}}G)$ with $e \leq \hat{z}$. Let $\theta^{(1)}_\gamma$, $\gamma \in \Gamma = \widehat{G}$, be the dual action of $\sigma^{\varphi_1,\Gamma}$. The uniqueness of $\hat{z}$ forces $\theta_\gamma^{(1)}(\hat{z}) = \hat{z}$ for all $\gamma \in \Gamma$, and thus $\hat{z}$ falls in the center of $(M_1\rtimes_{\sigma^{\varphi_1,\Gamma}}G)\rtimes_{\theta^{(1)}}\Gamma$. Moreover, $((M_1\rtimes_{\sigma^{\varphi_1,\Gamma}}G)\rtimes_{\theta^{(1)}}\Gamma)(1-\hat{z}) = ((M_1\rtimes_{\sigma^{\varphi_1,\Gamma}}G)(1-\hat{z}))\rtimes_{\theta^{(1)}}\Gamma$ becomes amenable due to Connes's result (see e.g.~\cite[Theorem XV.3.16]{Takesaki:Book}) again. By the Takesaki duality the unique $\hat{z}$ must be $1$, since $M_1$ (and hence $(M_1\rtimes_{\sigma^{\varphi_1,\Gamma}}G)\rtimes_{\theta^{(1)}}\Gamma \cong M_1\bar{\otimes}B(L^2(G))$) has no amenable summand. Consequently, we have shown that $M_1\rtimes_{\sigma^{\varphi_1,\Gamma}}G$ has no amenable summand too. By the construction of $\sigma^{\varphi_2,\Gamma}$ (see \cite[Proposition 1.1]{Connes:JFA74}) one has $\mathbb{C}\rtimes G = \mathbb{C}\,\bar{\otimes}\,L(G) \subseteq (M_2)_{\varphi_2}\,\bar{\otimes}\,L(G)$ sits inside $M_2\rtimes_{\sigma^{\varphi_2,\Gamma}}G$. Since $\varphi_2$ is almost periodic, one observes that $(M_2)_{\varphi_2} \neq \mathbb{C}$ due to e.g.~\cite[Lemma 2.1]{Ueda:MRL11}, and hence every non-zero projection $e = 1\otimes f \in \mathbb{C}\,\bar{\otimes}\,L(G) = \mathbb{C}\rtimes G$ satisfies that $e(M_2\rtimes_{\sigma^{\varphi_2,\Gamma}}G)e \supseteq (M_2)_{\varphi_2}\bar{\otimes}(L(G)f) \neq \mathbb{C}\,\bar{\otimes}\,(L(G)f) = (\mathbb{C}\rtimes G)e$. Consequently, \cite[Theorem 5.1]{BoutonnetHoudayerRaum:Preprint12} can be applied to the discrete core $\widehat{M}$. 

Suppose on the contrary that $M_\varphi$ has a Cartan subalgebra, say $A$, which must be diffuse since we have known that $M_\varphi$ is a type II$_1$ factor. Then $A\,\bar{\otimes}\,\ell^\infty$ becomes a Cartan subalgebra in $M_\varphi\,\bar{\otimes}\,B(\ell^2) \cong \widehat{M}$ by Eq.\eqref{Eq3}.  Thus the discrete core $\widehat{M}$ must have a Cartan subalgebra, say $D \cong A\bar{\otimes}\ell^\infty$. Since $\widehat{M}$ is a type II$_\infty$ factor, one can find a partial isometry $v \in \widehat{M}$ such that $v^* v \in D$, $vv^* \in \mathbb{C}\rtimes G$ and $\mathrm{Tr}(vv^*) < + \infty$ with the canonical trace $\mathrm{Tr}$ on $\widehat{M}$. Then $vDv^*$ becomes a Cartan subalgebra in $vv^*\widehat{M}vv^*$, and hence \cite[Theorem 5.1]{BoutonnetHoudayerRaum:Preprint12} shows that $vDv^* \preceq_{vv^*\widehat{M}vv^*}              (\mathbb{C}\rtimes G)vv^*$, a contradiction since $D$ is diffuse while $(\mathbb{C}\rtimes G)vv^*$ completely atomic. 
\end{proof} 

As pointed out in \cite[Corollary 4.3 (1)]{Ueda:AdvMath11} the diffuse factor summand $M_c$ is known to be always prime due to Chifan and Houdayer \cite[Theorem 5.2]{ChifanHoudayer:DukeMathJ10}; to be precise a supplementary argument due to Houdayer and Vaes \cite[Theorem 5.7]{HoudayerVaes:JMathPuresAppl1x} and a standard fact  e.g.~\cite[Lemma 5.2]{BoutonnetHoudayerRaum:Preprint12} are necessary. Here is one more proposition.  

\begin{proposition}\label{P7} Let $M_i$, $i=1,2$, be non-trivial von Neumann algebras with separable preduals and $\varphi_i$, $i=1,2$, be almost periodic states on them, respectively, such that at least one of them is not tracial. Then both the discrete core $\widehat{M}_c$ and the centralizer $(M_c)_{\varphi_c}$ with $\varphi_c := \varphi\!\upharpoonright_{M_c}$ of the resulting unique type III factor summand $M_c$ of the free product $(M,\varphi) = (M_1,\varphi_1)\star(M_2,\varphi_2)$ must be prime.
\end{proposition}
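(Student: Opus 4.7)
The plan is to first reduce the claim to the primeness of the discrete core. By Eq.\eqref{Eq4} we have the stable isomorphism $(M_c)_{\varphi_c}\,\bar{\otimes}\,B(\ell^2) \cong \widehat{M_c}$, so primeness of the type II$_1$ factor $(M_c)_{\varphi_c}$ is equivalent to primeness of the type II$_\infty$ factor $\widehat{M_c}$. It thus suffices to establish that $\widehat{M_c}$ is prime.

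Next, I would mimic the preliminary reductions used in the proof of Proposition \ref{P6}. When both the $M_i$ are amenable the diffuse factor summand $M_c$ is itself amenable, hence has amenable centralizer, and then primeness of $L(\mathbb{F}_\infty)$ implied by Theorem \ref{T3} yields the conclusion immediately. Otherwise, using the Claim in the proof of Proposition \ref{P6} (existence of a unique maximal amenable central projection in $M_1$) together with the corner description of $M_c$ from \S\ref{SS2.1}, one may assume that $M_1$ has no amenable direct summand and that $M = M_c$, $\varphi = \varphi_c$. Then Theorem \ref{T1} identifies $\widehat{M_c} = M\rtimes_{\sigma^{\varphi,\Gamma}}G$ as the amalgamated free product $(M_1\rtimes_{\sigma^{\varphi_1,\Gamma}}G)\star_{\mathbb{C}\rtimes G}(M_2\rtimes_{\sigma^{\varphi_2,\Gamma}}G)$ over the atomic abelian subalgebra $\mathbb{C}\rtimes G \cong \ell^\infty(\Gamma)$. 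As shown in the proof of Proposition \ref{P6}, $M_1\rtimes_{\sigma^{\varphi_1,\Gamma}}G$ still contains no amenable direct summand, while the right factor $M_2\rtimes_{\sigma^{\varphi_2,\Gamma}}G$ properly contains the amalgam on every non-zero corner (since $(M_2)_{\varphi_2}\neq\mathbb{C}$ by \cite[Lemma 2.1]{Ueda:MRL11}).

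The core argument is then a Chifan--Houdayer/Houdayer--Vaes type primeness theorem for amalgamated free products. Assume, for contradiction, that $p\widehat{M_c}p = P\,\bar{\otimes}\,Q$ for some finite projection $p \in \widehat{M_c}$ and diffuse II$_1$ factors $P, Q$. Since $Q \subseteq P' \cap p\widehat{M_c}p$ is diffuse, Popa's intertwining-by-bimodules techniques adapted to the amalgamated free product setting by \cite{IoanaPetersonPopa:Acta08} and \cite{HoudayerVaes:JMathPuresAppl1x} (and further refined in \cite{BoutonnetHoudayerRaum:Preprint12}) yield that either $P$ (or $Q$) Popa-embeds into one of the corner algebras $M_i\rtimes_{\sigma^{\varphi_i,\Gamma}}G$, or else the relative commutant of $P$ in $p\widehat{M_c}p$ Popa-embeds into the amalgam $\mathbb{C}\rtimes G$. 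The second alternative is impossible because a diffuse subalgebra cannot embed into the atomic $\ell^\infty(\Gamma)$. The first alternative is ruled out exactly as in the original proof of primeness of $M_c$ (cf.~\cite[Corollary 4.3(1)]{Ueda:AdvMath11}), by combining the no-amenable-summand property of $M_1\rtimes_{\sigma^{\varphi_1,\Gamma}}G$ with a Chifan--Houdayer style (AO)/weak-exactness argument applied to the resulting tensor factor — the standard fact \cite[Lemma 5.2]{BoutonnetHoudayerRaum:Preprint12} takes care of the passage between the original free product and its corner.

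The main obstacle will be locating (or unwinding from the existing references) a primeness statement for amalgamated free products over the non-finite atomic amenable algebra $\mathbb{C}\rtimes G$ in a form directly applicable here; the rest is formal. The overall shape of the argument closely parallels that of Proposition \ref{P6}, the Cartan subalgebra obstruction being replaced by a primeness obstruction, both ultimately reducing to the impossibility of a Popa-intertwining of a diffuse algebra into the atomic amalgam $\mathbb{C}\rtimes G$.
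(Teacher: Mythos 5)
Your reduction steps are fine and agree with the paper: by Eq.\eqref{Eq4} primeness of $(M_c)_{\varphi_c}$ and of $\widehat{M}_c$ are equivalent, and the trick of \S\S\ref{SS2.1} lets one assume $M=M_c$, so that Theorem \ref{T1} exhibits $\widehat{M}=\widehat{M}_1\star_{\mathbb{C}\rtimes G}\widehat{M}_2$. (The extra reductions you make are not needed: the amenable case requires no separate treatment, and the paper never reduces to ``$M_1$ has no amenable summand'' here --- it only uses that $\widehat{M}$ is non-amenable, which is automatic from fullness of $M_c$, to choose a non-amenable tensor factor $N_1$ in the putative decomposition $\widehat{M}=N_1\,\bar{\otimes}\,N_2$.)

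The genuine gap is in your endgame. After the intertwining step (in the paper: $P_2=P_1'\cap p\widehat{M}p\preceq_{\widehat{M}}\widehat{M}_i$ by \cite[Theorem 4.2]{ChifanHoudayer:DukeMathJ10}, applied to finite corners $P_1=e_1N_1e_1\otimes\mathbb{C}e_2$, $P_2=\mathbb{C}e_1\otimes e_2N_2e_2$), you claim the alternative ``$P$ or $Q$ embeds into $\widehat{M}_i$'' can be \emph{ruled out} by an (AO)/weak-exactness argument using the no-amenable-summand property. This is not available and not how the argument goes: the $M_i$ are arbitrary von Neumann algebras with almost periodic states, so no (AO) or weak exactness hypothesis is at hand, and the proof of primeness of $M_c$ itself (\cite[Corollary 4.3 (1)]{Ueda:AdvMath11}, via \cite[Theorem 5.2]{ChifanHoudayer:DukeMathJ10} and \cite[Theorem 5.7]{HoudayerVaes:JMathPuresAppl1x}) is a deformation/rigidity argument, not an Ozawa-type one. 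Moreover, in the Bass--Serre rigidity scheme the intertwining of $P_2$ into one side is not a contradiction at all --- it is the expected outcome and the starting point of a spatial analysis. The atomicity of $\mathbb{C}\rtimes G$ enters only through $\rho(P_2)\not\preceq_{\widehat{M}}D^n$ (a II$_1$ factor cannot intertwine into an atomic algebra), which via \cite[Theorem 2.4]{ChifanHoudayer:DukeMathJ10} is used twice to place $v^*Qv$ inside a corner of $\widehat{M}_i^n$; one then follows the later paragraphs of the proof of \cite[Theorem 5.2]{ChifanHoudayer:DukeMathJ10} and the proof of \cite[Theorem 5.7]{HoudayerVaes:JMathPuresAppl1x} to produce a non-trivial projection $r\in\mathbb{C}\rtimes G$ with $r\widehat{M}_{i'}r=(\mathbb{C}\rtimes G)r$ for $i'\neq i$. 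The actual contradiction is with the non-triviality of the centralizer $(M_{i'})_{\varphi_{i'}}$ (so each $\widehat{M}_{i'}$ properly contains the amalgam on every corner, by \cite[Lemma 2.1]{Ueda:MRL11}), not with any amenability obstruction. As written, your proposal is missing this entire upgrading step, and the step you propose in its place would fail.
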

\begin{proof} Since we have known that $(M_c)_{\varphi_c}$ is a type II$_1$ factor and since any elements in the centers of both the $M_i$ falls into the centralizer of the free product state $\varphi$, the trick explained at the end of \S\S\ref{SS2.1} enables us to assume, as in the proof of Proposition \ref{P6} above, that $M = M_c$. It suffices to prove that the discrete core $\widehat{M}$, which must be a type II$_\infty$ factor, is prime. Theorem \ref{T1} says that $\widehat{M} = \widehat{M}_1 \star_D \widehat{M}_2$ equipped with a canonical faithful normal semifinite trace $\mathrm{Tr}$, where $\widehat{M}_i := M_i \rtimes_{\sigma^{\varphi_i,\Gamma}}G \supseteq D := \mathbb{C}\rtimes G$ with $\Gamma := \mathrm{Sd}(M)$ and $G := \widehat{\Gamma}$. We need to modify only the first two paragraphs of the proof of \cite[Theorem 5.2]{ChifanHoudayer:DukeMathJ10}. Suppose on the contrary that $\widehat{M} = N_1\,\bar{\otimes}\,N_2$ with two type II$_\infty$ factors $N_k$, $k=1,2$. By \cite[Theorem 4.1]{Ueda:AdvMath11} together with Eq.\eqref{Eq4} the discrete core $\widehat{M}$ is non-amenable so that we may and do assume that so is $N_1$. Choose finite projections $e_k \in N_k$, $k=1,2$, and set $p := e_1\otimes e_2 \in \widehat{M}$. Clearly one has $\mathrm{Tr}(p) < +\infty$. Set $P_1 := e_1 N_1 e_1\,\bar{\otimes}\,\mathbb{C}e_2$ and $P_2 := \mathbb{C}e_1\,\bar{\otimes}\,e_2 N_2 e_2$, both of which sit inside $p\widehat{M}p$. By \cite[Theorem 4.2]{ChifanHoudayer:DukeMathJ10} one has $P_2 = P_1' \cap p\widehat{M}p \preceq_{\widehat{M}} \widehat{M}_i$ for some $i \in \{1,2\}$. Namely, there exist $n \in \mathbb{N}$, a finite projection $q \in \widehat{M}_i^n$ (the $n$-amplification of $\widehat{M}_i$), a normal $*$-isomorphism $\rho : P_2 \to q\widehat{M}_i^n q$ and a partial isometry $v \in M_{1,n}(p\widehat{M})$ so that $v^* v \leq q$, $vv^* \leq p$ and $xv = v\rho(x)$ for $x \in P_2$. Set $Q := P_1 \vee \{vv^*\}$ inside $p\widehat{M}p$. Then $vv^* \in Q$ and $v^* v \in \rho(P_2)'\cap q\widehat{M}^n q$. Since $\rho(P_2)$ is a type II$_1$ factor, we see that $\rho(P_2) \not\preceq_{\widehat{M}} D^n$ so that $v^* v \in \rho(P_2)'\cap q\widehat{M}^n q \subseteq q\widehat{M}_i^n q$ by \cite[Theorem 2.4]{ChifanHoudayer:DukeMathJ10}. Thus we may and do assume that $v^* v = q$. Note that $v^* P_1 v \not\preceq_{\widehat{M}^n} D^n$ since $P_1$ is a type II$_1$ factor and that $v^* P_2 v = \rho(P_2)v^* v \subseteq q\widehat{M}_i^n q$. Hence by \cite[Theorem 2.4]{ChifanHoudayer:DukeMathJ10} again we get $v^* P_1 v \subseteq (v^* P_2 v)'\cap q\widehat{M}^n q \subseteq q \widehat{M}_i^n q$. Consequently, $v^* Q v = v^* P_1 v \vee \{v^* v\}'' \subseteq q\widehat{M}_i^n q$. Hence, with these $Q$ and $v$ we can follow the third and fourth paragraphs of the proof of \cite[Theorem 5.2]{ChifanHoudayer:DukeMathJ10} (this part is easier to handle than the original argument since our $P_2$ is finite) and then can do the proof of \cite[Theorem 5.7]{HoudayerVaes:JMathPuresAppl1x} without any change; thus we get a non-trivial projection $r \in D$ so that $r\widehat{M}_{i'}r = Dr$ with $i' \neq i$. This is impossible since $(M_{i'})_{\varphi_{i'}}$ is non-trivial thanks to e.g.~\cite[Lemma 2.1]{Ueda:MRL11} (see the proof of Proposition \ref{P6} above).      
\end{proof} 

The same pattern of the above argument (with the help of a standard fact, see e.g.~\cite[Lemma 5.2]{BoutonnetHoudayerRaum:Preprint12}) shows the next remark. The details are left to the reader, since the main targets of the present paper are discrete cores.

\begin{remark}\label{R8} Let $M_i$, $i=1,2$, be non-trivial von Neumann algebras with separable preduals and $\varphi_i$, $i=1,2$, be faithful normal states on them, respectively. If the resulting unique diffuse summand $M_c$ of the free product $(M,\varphi) = (M_1,\varphi_1)\star(M_2,\varphi_2)$ is a type III$_1$ factor, then the continuous core $\widetilde{M}_c = M_c \rtimes_{\sigma^{\varphi_c}}\mathbb{R}$ with $\varphi_c := \varphi\!\upharpoonright_{M_c}$ is a prime type II$_\infty$ factor.
\end{remark} 

The above two assertions tell us that the diffuse factor summand $M_c$ and all its possible `canonical' factors must be prime. Hence the question of primeness of free product von Neumann algebras has been solved completely. Remark that Gao and Junge \cite{GaoJunge:IMRN07} proved, based on Ozawa's method \cite{Ozawa:ActaMath04}, that if both $M_i$, $i=1,2$, are hyperfinite, then $M$ must be `state solid', a stronger property than primeness. However this property cannot hold in general. 

In contrast to the question of primeness, it seems still unsolved whether or not the continuous core $\widetilde{M}_c$ of the diffuse factor summand $M_c$ can possess a Cartan subalgebra when $M_c$ is of type III$_1$ (and $\varphi_c$ not almost periodic). This question seems untouched even when both the $M_i$ are hyperfinite. The same question for free Araki--Woods factors was already settled by Houdayer and Ricard \cite[Theorem D (1)]{HoudayerRicard:AdvMath11}.  

\subsection{Second precise structural results}\label{SS2.5}  
The case when one of the $\varphi_i$, $i=1,2$, is tracial is important in actual applications of type III free product factors. In fact, any free Araki--Woods factor with admitting discrete decomposition is of such form (see \cite{Shlyakhtenko:PacificJMath97}) and also any countable discrete subgroup of $\mathbb{R}_+^\times$ is obtainable as the fundamental group of the centralizer of such a free product factor (see \cite{Houdayer:Crelle09}). Here we prove precise descriptions of the centralizers of such free product states by combining Theorem \ref{T1} and Dykema's devices \cite{Dykema:MathProcCambPhilSoc04} (see \S3). 

\begin{proposition}\label{P9} Let $M_i$, $i=1,2$, be non-trivial von Neumann algebras with separable preduals and $\varphi_i$, $i=1,2$, be faithful normal states on them, respectively. 

{\rm(9.1)} Assume that $M_1$ is a type III factor, $\varphi_1$ almost periodic such that its centralizer $(M_1)_{\varphi_1}$ is a factor {\rm(}i.e., $\varphi_1$ is extremal{\rm)}, $M_2$ either a diffuse hyperfinite von Neumann algebra or a type II$_1$ factor, and finally $\varphi_2$ tracial. Then $M$ is a type III factor that admits discrete decomposition, and the centralizer $M_\varphi$ of the free product state $\varphi$ is isomorphic to 
\begin{equation*} 
\begin{cases}
(M_1)_{\varphi_1}\star L(\mathbb{F}_\infty)  &(\text{when $M_2$ is a diffuse hyperfinite von Neumann algebra}), \\
(M_1)_{\varphi_1}\star\Big(\bigstar_{\gamma\in\Gamma} (M_2)^\gamma\Big) 
&(\text{when $M_2$ is a type II$_1$ factor})
\end{cases}
\end{equation*}
with the Sd-invariant $\Gamma := \mathrm{Sd}(M)$ of $M$, where $(M_2)^\gamma$ denotes the $\gamma$-amplification of the type II$_1$ factor $M_2$. 

{\rm(9.2)} Assume that $M_1$ is of atomic type I, $\varphi_1$ not tracial {\rm(}automatically being almost periodic{\rm)}, and $M_2$ a type II$_1$ factor with tracial $\varphi_2$. Then $M$ is a type III factor that admits discrete decomposition, and the centralizer $M_\varphi$ of the free product state $\varphi$ is isomorphic to $\bigstar_{\gamma\in\Gamma} (M_2)^\gamma$, where $\Gamma$ and $(M_2)^\gamma$ are as in {\rm(1)}. 
\end{proposition}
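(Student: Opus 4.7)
The plan is to identify the centralizer $M_\varphi$ with a corner of the discrete core and then read off the structure by combining Theorem~\ref{T1} with Dykema's amalgamated free product compression techniques \cite{Dykema:MathProcCambPhilSoc04} (supplemented in \S3).

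First I would carry out a preliminary reduction. In both (9.1) and (9.2) the state $\varphi_2$ is tracial and $\varphi_1$ is non-tracial, and both are almost periodic, so \cite[Corollary 3.1]{Ueda:MRL11} applies: the diffuse factor summand $M_c$ is type~III with discrete decomposition, Sd-invariant $\Gamma$, and $(M_c)_{\varphi_c}$ is a type~II$_1$ factor. Using the reduction Eq.~\eqref{Eq1}--\eqref{Eq2} of \S\ref{SS2.1}, I may assume $M = M_c$. Then Eq.~\eqref{Eq4} identifies $M_\varphi$ with the corner $e_1\widehat{M}e_1$ of the discrete core $\widehat{M} := M\rtimes_{\sigma^{\varphi,\Gamma}}G$ at the projection $e_1 \in \mathbb{C}\rtimes G \cong \ell^\infty(\Gamma)$ labelled by $1 \in \Gamma$, which has $\mathrm{Tr}(e_1) = 1$.

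Next I would decompose $\widehat{M}$ using Theorem~\ref{T1}: $\widehat{M} = \widehat{M}_1\star_D \widehat{M}_2$ over $D := \mathbb{C}\rtimes G$, where $\widehat{M}_i := M_i\rtimes_{\sigma^{\varphi_i,\Gamma}}G$. Since $\varphi_2$ is tracial, $\sigma^{\varphi_2,\Gamma}$ is trivial and $\widehat{M}_2 = M_2\bar\otimes\ell^\infty(\Gamma)$, whose $e_\gamma$-compression is the amplification $(M_2)^\gamma$ (with $\mathrm{Tr}(e_\gamma) = \gamma^{-1}$) in the factor case, or an amplified diffuse hyperfinite algebra otherwise. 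For $\widehat{M}_1$ in case (9.1), extremality of $\varphi_1$ gives $((M_1)_{\varphi_1})' \cap M_1 = \mathbb{C}$ so that Lemma~\ref{L5} applies; $\widehat{M}_1$ is a direct sum of amplifications of the factor $(M_1)_{\varphi_1}$ indexed by the atoms of $D$, with $e_1\widehat{M}_1 e_1 \cong (M_1)_{\varphi_1}$. For $\widehat{M}_1$ in case (9.2), $M_1$ atomic type~I makes the $G$-action inner (implemented by the spectral projections of the densities of $\varphi_1$), so $\widehat{M}_1$ is itself atomic with the abelian $(M_1)_{\varphi_1}$ absorbed into $D$ up to amplification.

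Finally I would apply Dykema's compression recipe for amalgamated free products over atomic abelian subalgebras to $\widehat{M}_1 \star_D \widehat{M}_2$ at $e_1$. In case (9.1) with $M_2$ a type~II$_1$ factor, each atom $e_\gamma$ contributes a free copy of $(M_2)^\gamma$ and, together with $(M_1)_{\varphi_1}$ appearing at $e_1$, the pieces assemble into $(M_1)_{\varphi_1}\star \bigstar_{\gamma\in\Gamma}(M_2)^\gamma$. In case (9.1) with $M_2$ diffuse hyperfinite, the diffuse hyperfinite pieces together with the off-diagonal free generators of Dykema's recipe collapse by free absorption into a single $L(\mathbb{F}_\infty)$, yielding $(M_1)_{\varphi_1}\star L(\mathbb{F}_\infty)$. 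In case (9.2) the abelian $(M_1)_{\varphi_1}$ is absorbed into the atomic amalgamating $D$, leaving only $\bigstar_{\gamma\in\Gamma}(M_2)^\gamma$. The hard part is the careful bookkeeping in the compression formula---identifying the amplifications $(M_2)^\gamma$ with their normalised traces $\gamma^{-1}$, collapsing the off-diagonal corners against the hyperfinite or atomic pieces, and handling the (possibly infinite) indexing set $\Gamma$---which is precisely the content of the addendum to \cite{Dykema:MathProcCambPhilSoc04} given in \S3.
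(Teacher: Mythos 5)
Your setup coincides with the paper's: reduce to the discrete core via Theorem \ref{T1} and Eq.~\eqref{Eq4}, identify $\widehat{M}_2 = M_2\,\bar{\otimes}\,\ell^\infty(\Gamma)$, use Lemma \ref{L5} for $\widehat{M}_1$, and then invoke Dykema's compression results from \cite{Dykema:MathProcCambPhilSoc04}. But the decisive steps are asserted rather than proved, and in (9.1) your description of $\widehat{M}_1$ is off in a way that matters. Lemma \ref{L5} gives a direct sum of amplifications of $(M_1)_{\varphi_1}$ indexed by the minimal projections of $\mathcal{Z}(\widehat{M}_1)$, \emph{not} by the atoms of $D=\mathbb{C}\rtimes G$. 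To get a \emph{single} free copy of $(M_1)_{\varphi_1}$ in the final formula one needs to know that all the atoms $e_\gamma$, $\gamma\in\Gamma$, are equivalent inside $\widehat{M}_1$, i.e.\ that $\widehat{M}_1$ is a single II$_\infty$ factor; this is where the paper uses \cite[Lemma 4.8]{Connes:JFA74}: extremality of $\varphi_1$ forces the point spectrum of $\Delta_{\varphi_1}$ to be a group, hence equal to all of $\Gamma$. Without pinning this down (together with $e_1\widehat{M}_1e_1\cong(M_1)_{\varphi_1}$ and $\sum_{\gamma\in\Gamma}\mathrm{Tr}(e_\gamma)=\sum_\gamma\gamma^{-1}=+\infty$), the hypotheses of \cite[Corollary 2.2]{Dykema:MathProcCambPhilSoc04} are not in place, and a literal reading of your ``indexed by the atoms of $D$'' picture would produce a different (wrong) compression formula with many free copies of amplifications of $(M_1)_{\varphi_1}$.

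In (9.2) the gap is larger. First, $(M_1)_{\varphi_1}$ need not be abelian (eigenvalues of the density operator can have multiplicity), though that is a side issue. The real point is that the ``absorption'' of the atomic leg into $D$ is exactly the conclusion of \cite[Corollary 3.2 (B)]{Dykema:MathProcCambPhilSoc04}, and applying it requires verifying its hypotheses: the condition $\mathcal{Z}(\widehat{M}_1)\cap D=\mathbb{C}$ (which the paper extracts from factoriality of $\widehat{M}$ via \cite[Theorem 4.3 (1)]{Ueda:JLMS13}), and one of the conditions (i)--(iii) there. Verifying the latter is the substantive part of the paper's argument: one must compute explicitly how $D$ sits inside $\widehat{M}_1\cong M_1\,\bar{\otimes}\,\ell^\infty(\Gamma)$ (the twisted projections $p_\gamma=\sum_{k,j}f_{kj}\otimes\delta_{\gamma_{kj}^{-1}\gamma}$ and the trace $\mathrm{Tr}_0$), choose $\gamma_*\in\Gamma\cap(0,1)$, and produce, using the decomposition of Lemma \ref{L11} and a suitable total ordering, a sequence $m_k$ with $\gamma_*^{m_k-1}\prec\gamma_*^{m_k}$ so that appropriate minimal subprojections of earlier atoms are equivalent to subprojections of later ones. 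Your proposal defers all of this to \S3, but \S3 only removes the obstruction ($\diamondsuit$) in Dykema's Proposition 3.1 for infinite index sets; it does not perform the application-specific bookkeeping, which is carried out inside the proof of Proposition \ref{P9} itself. As written, the conclusion $M_\varphi\cong\bigstar_{\gamma\in\Gamma}(M_2)^\gamma$ (with no extra interpolated free group factor or atomic summands) is not justified.
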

\begin{proof} 
(9.1) By \cite[Theorem 4.1]{Ueda:AdvMath11} and Theorem \ref{T1} $M\rtimes_{\sigma^{\varphi,\Gamma}}G$ is a type II$_\infty$ factor and also a semifinite tracial amalgamated free product of $M_1\rtimes_{\sigma^{\varphi_1,\Gamma}}G$ and $M_2\rtimes_{\sigma^{\varphi_2,\Gamma}}G$ with amalgamation over the atomic abelian von Neumann subalgebra $\mathbb{C}\rtimes G \cong \ell^\infty(\Gamma)$. In this case $M_2\rtimes_{\sigma^{\varphi_2,\Gamma}}G$ is nothing but a direct sum of infinitely many copies of $M_2$. Moreover, by \cite[Lemma 4.8]{Connes:JFA74} the point spectrum of the modular operator $\Delta_{\varphi_1}$ must be $\Gamma$ (due to the assumption that $(M_1)_{\varphi_1}$ is a factor). Remark also that $e_1 (M_1\rtimes_{\sigma^{\varphi_1,\Gamma}}G)e_1 \cong (M_1)_{\varphi_1}$ due to the proof of Lemma \ref{L5}. Since $\sum_{\gamma\in\Gamma}\mathrm{Tr}(e_\gamma) = \sum_{\gamma\in\Gamma}\gamma^{-1} = +\infty$ by Theorem \ref{T1} (1.1), the desired assertion immediately follows from \cite[Corollary 2.2]{Dykema:MathProcCambPhilSoc04} and its proof (strictly speaking it shows only the latter case, but the proof perfectly works for the former too). 

\medskip
(9.2) As in the proof of Theorem \ref{T3} one can see that $M_1\rtimes_{\sigma^{\varphi_1,\Gamma}}G \cong M_1\,\bar{\otimes}\,L(G) \cong M_1\,\bar{\otimes}\,\ell^\infty(\Gamma)$, being of atomic type I too. Moreover, Theorem \ref{T1} and e.g.~\cite[Theorem 4.3 (1)]{Ueda:JLMS13} show that $\mathcal{Z}(M_1\rtimes_{\sigma^{\varphi_1,\Gamma}}G) \cap (\mathbb{C}\rtimes G) = \mathbb{C}$, since $M\rtimes_{\sigma^{\varphi,\Gamma}}G$ has been known to be a factor. Hence \cite[Corollary 3.2 (B)]{Dykema:MathProcCambPhilSoc04} can be applied to $M\rtimes_{\sigma^{\varphi,\Gamma}}G$. Thus it remains only to confirm one of the conditions (i)--(iii) there. 

To do so one needs to examine $M_1\rtimes_{\sigma^{\varphi_1,\Gamma}}G \supseteq \mathbb{C}\rtimes G$ and the $\mathrm{Tr}$ constructed in Theorem \ref{T1} (1.1). Write $M_1 = \sum_{k\geq1}^\oplus B(\mathcal{H}_k)$, and for each $k\geq1$ one can choose a complete set of minimal projections $f_{k1},f_{k2},\dots \in B(\mathcal{H}_k) \subset M_1$ and a decreasing sequence $\gamma_{k1}=1 \geq \gamma_{k2} \geq \cdots >0$ in $\Gamma$ so that the density operator of $\varphi_1\!\upharpoonright_{B(\mathcal{H}_k)}$ is diagonalized as  $c_k\sum_{j\geq1} \gamma_{kj} f_{kj}$. Then the bijective $*$-homomorphism $M_1\rtimes_{\sigma^{\varphi_1,\Gamma}}G \cong M_1\,\bar{\otimes}\,L(G)$ is given by sending $x \in M_1 \subset M_1\rtimes_{\sigma^{\varphi_1,\Gamma}}G$ and $\lambda(g)$ to $x\otimes1$ and $u(g)\otimes\lambda_g$ with $u(g) = \sum_{k\geq1}\sum_{j\geq1} \langle \gamma_{kj},g\rangle f_{kj}$, where the $\lambda_g$, $g \in G$, denote the (left) regular representation of $G$ on $L^2(G)$. Accordingly, via the isomorphism $L(G) \cong \ell^\infty(\Gamma)$ every `canonical' projection $e_\gamma \in \mathbb{C}\rtimes G \subset M_1\rtimes_{\sigma^{\varphi_1,\Gamma}}G$, $\gamma \in \Gamma$, is transformed to $p_\gamma := \sum_{k\geq1}\sum_{j\geq1} f_{kj}\otimes \delta_{\gamma_{kj}^{-1}\gamma} \in M_1\,\bar{\otimes}\,\ell^\infty(\Gamma)$, and the $\mathrm{Tr}$ to the trace $\mathrm{Tr}_0$ determined by 
$$
\mathrm{Tr}_0(f_{kj} \otimes \delta_\gamma) = \mathrm{Tr}_0(f_{kj}\otimes1\cdot p_{\gamma_{kj}\gamma})\,(= \mathrm{Tr}(E_{\varphi_1}(f_{kj}) e_{\gamma_{kj}\gamma}) = c_k\gamma_{kj}\mathrm{Tr}(e_{\gamma_{kj}\gamma})\,)= c_k/\gamma
$$ 
for every $(k,j)$ and every $\gamma\in\Gamma$, where the $\delta_\gamma$, $\gamma \in \Gamma$, are the canonical minimal projections of $\ell^\infty(\Gamma)$. 

The rest of the proof heavily depends on the discussion in \S3, a modification of the original proof of \cite[Proposition 3.1]{Dykema:MathProcCambPhilSoc04}; hence we advise the reader to consult \S3 before going to the following arguments. One can choose an appropriate pair $k_0,j_0$ in such a way that $\gamma_* := \gamma_{k_0 j_0} \in (0,1)$. Let $\Gamma = \bigcup_{n\in\mathbb{N}_0} I_n$ be as in Lemma \ref{L11} in \S3. Choosing a suitable total ordering on each $I_n$ we may and do assume that $\gamma_*^{m_1} \prec \gamma_*^{m_2}$ whenever $m_1 < m_2$ and $\gamma_*^{m_1}, \gamma_*^{m_2} \in I_n$ for the same $n$, where `$\prec$' denotes the ordering on $\Gamma$ that we are employing. Here is a claim. 

\medskip\noindent
{\bf Claim:} There is a strictly increasing sequence $\{m_k\}$ of natural numbers such that $\gamma_*^{m_k -1} \prec \gamma_*^{m_k}$ holds for every $k = 1,2,\dots$. 

\noindent($\because$) Set $m_1 := 1$, and then $\gamma_*^{m_1 - 1} = 1 \in I_0$ and $\gamma_*^{m_1} = \gamma_* = \gamma_{k_0 j_0} \in I_1$; hence $\gamma_*^{m_1 - 1} \prec \gamma_*^{m_1}$ holds. Assume that we have already chosen $m_1 < \cdots < m_k$ with the desired property. The next $m_{k+1}$ can be chosen as follows. Let $n$ be the unique natural number so that $\gamma_*^{m_k} \in I_n$. Since our ordering is total, only two possibilities exists; namely either $\gamma_*^{m_k} \prec \gamma_*^{m_k + 1}$ or $\gamma_*^{m_k + 1} \prec \gamma_*^{m_k}$. It suffices to define $m_{k+1} := m_k + 1$ in the former case. Thus we consider the latter case. Let $n'$ be the unique natural number such that $\gamma_*^{m_k + 1} \in I_{n'}$. Our ordering forces $n'$ to be strictly smaller than $n$. There are only two possibilities; the unique $n''$ with $\gamma_*^{m_k + 2} \in I_{n''}$ satisfies either $n' \leq n''$ or $n'' < n'$. In the former case $\gamma_*^{m_k + 1} \prec \gamma_*^{m_k + 2}$ holds; hence it suffices to define $m_{k+1} := m_k + 2$. In the latter case we repeat the same argument for $\gamma_*^{m_k + 3}$, and this procedure will certainly stop after finitely many steps since $n$ is finite. Consequently, one can choose $l \geq 1$ in such a way that $\gamma_*^{m_k + l -1} \prec \gamma_*^{m_k + l}$, and the desired $m_{k+1}$ can be chosen to be $m_k + l$. Hence the claim is proved by induction. \qed

\medskip 
Notice that $m_k \leq m_{k+1} - 1 \lneqq m_{k+1}$ and $\gamma_*^{m_k - 1} \prec \gamma_*^{m_k}$ for every $k$. Moreover, we have $f_{k_0 1}\otimes\delta_{\gamma_*^{m_k -1}} \leq p_{\gamma_*^{m_k -1}}$, $f_{k_0 j_0}\otimes\delta_{\gamma_*^{m_k -1}} \leq p_{\gamma_*^{m_k}}$ and $\mathrm{Tr}_0(f_{k_0 1}\otimes\delta_{\gamma_*^{m_k -1}}) = c_{k_0}/\gamma_*^{m_k -1} \to +\infty$ as $k\to\infty$. We can choose $\mathrm{Tr}_0(f_{k_0 1}\otimes \delta_{\gamma_*^{m_k -1}})$ as `$\gamma(i)$' with $i=\gamma_*^{m_k}$ and $I=\Gamma$ in \S3. Hence the condition (ii) of \cite[Corollary 3.2 (B)]{Dykema:MathProcCambPhilSoc04} trivially holds under these choices of ordering and minimal subprojections. Here the reader should notice the comment in the last sentence of \S3.  
\end{proof} 

The next simple application of the above proposition seems interesting. 

\begin{remark}\label{R10}{\rm 
Let $\Gamma$ be an arbitrary countable subgroup of $\mathbb{R}_+^*$, and enumerate $\{\gamma_1,\gamma_2,\dots\}=\Gamma\cap(0,1)$. Consider $M_1 := \sum_{k\geq1}^\oplus M_2(\mathbb{C})$ and the state $\varphi_1$ is given by the density matrix: 
$$
\sideset{}{^\oplus}\sum_{k\geq1} \frac{1}{2^k(1+\gamma_k)}(e_{11}^{(k)}+\gamma_k e_{22}^{(k)}) 
$$
with standard matrix units $e_{ij}^{(k)}$ in $M_2(\mathbb{C})$. The pair $(M_1,\varphi_1)$ fulfills the requirement of the above assertion (2), and thus the centralizer of the resulting free product state is $\bigstar_{\gamma\in\Gamma}(M_2)^\gamma$. This relates \cite[Corollary 6.5 (4)]{IoanaPetersonPopa:Acta08} to \cite[Theorem 5.10]{Houdayer:Crelle09}.}
\end{remark} 

\section{Addition to \cite{Dykema:MathProcCambPhilSoc04}} 

We used \cite[Corollary 3.2]{Dykema:MathProcCambPhilSoc04}, a corollary of \cite[Proposition 3.1]{Dykema:MathProcCambPhilSoc04}, in the proof of Proposition \ref{P6} (2) above, but \cite[Proposition 3.1]{Dykema:MathProcCambPhilSoc04} needs the following ($\diamondsuit$): `For every $m \in I\setminus\{1\}$, there exists a minimal projection $e_m$ in $A$ such that $e_m \leq p_1 + \cdots + p_{m-1}$ and $e_m$ is equivalent to a subprojection of $p_m$' with the notations there. Such $e_m$ does not exist for an arbitrary enumeration (or ordering by natural numbers) on $\{p_i\}_{i\in I}$, and it seems not so easy in general to choose a special enumeration on $\{p_i\}_{i\in I}$ for which ($\diamondsuit$) holds, when $I$ is infinite. Since some part of our proof of Proposition \ref{P9} depends on the careful choice of such $e_m$, we will give a way of avoiding ($\diamondsuit$) for the reader's convenience, though the potential reader can probably modify the original proof. We follow the notations in \cite[\S3]{Dykema:MathProcCambPhilSoc04} in principle.  

\begin{lemma}\label{L11} Fix a distinguished element $o \in I$. Define $I_0 := \{o\}$, $I_1 := \{ i \in I\setminus I_0\,|\,p(0)Ap_i \neq \{0\}\}$ with $p(0) := p_o$ and inductively $I_{n+1} := \{ i \in I \setminus \bigcup_{s=0}^n I_s\,|\, p(n)Ap_i \neq \{0\}\}$ with $p(n) := \sum_{s=0}^n \sum_{i \in I_s} p_i$. Then $I = \bigcup_{n \in \mathbb{N}_0} I_n$, a disjoint union, with $\mathbb{N}_0 := \{0\}\sqcup\mathbb{N}$. 
\end{lemma}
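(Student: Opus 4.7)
My plan has two parts. Disjointness of the family $\{I_n\}_{n\in\mathbb{N}_0}$ is built into the recursive definition: $I_{n+1}\subseteq I\setminus\bigcup_{s=0}^{n}I_s$ by construction, so $I_n\cap I_{n'}=\emptyset$ for $n\ne n'$ by immediate induction.

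For the equality $I=\bigcup_n I_n$, I would form the SOT-limit $p(\infty):=\sup_n p(n)\in A$ of the increasing sequence of projections, set $J:=I\setminus\bigcup_n I_n$, and argue by contradiction assuming $J\ne\emptyset$. For each $j\in J$ and each $n$, the condition $j\notin I_{n+1}$ forces $p(n)Ap_j=\{0\}$ directly from the defining recursion. SOT-continuity of left multiplication on norm-bounded subsets then passes this to the limit, yielding $p(\infty)Ap_j=\{0\}$; taking adjoints gives $p_j Ap(\infty)=\{0\}$ as well. Setting $p_J:=\sum_{j\in J}p_j$, which equals $1-p(\infty)$ since in the Dykema \S3 setup the $\{p_i\}_{i\in I}$ partition the identity, the two relations $p(\infty)A(1-p(\infty))=0$ and $(1-p(\infty))Ap(\infty)=0$ together show that $p(\infty)$ commutes with every element of $A$, i.e., $p(\infty)\in\mathcal{Z}(A)$.

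The main obstacle is to close the argument by deducing $p(\infty)=1$, and here factoriality of $A$ inherited from the Dykema setup is crucial. In our intended use of this lemma through Proposition \ref{P9}, after the identifications in the proof the ambient algebra $A$ becomes the amalgamated-free-product factor $M\rtimes_{\sigma^{\varphi,\Gamma}}G$, so $\mathcal{Z}(A)=\mathbb{C}\cdot 1$. Hence $p(\infty)\in\{0,1\}$, and $p_o\le p(\infty)$ with $p_o\ne 0$ forces $p(\infty)=1$; consequently $p_J=0$ and $J=\emptyset$, completing the argument.
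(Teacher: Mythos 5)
Your argument up to the centrality of $p(\infty)$ is sound and is essentially the paper's own: the paper sets $p:=p(\infty)$, $q:=1-p$, deduces $pAq=\{0\}$ exactly as you do, and phrases the commutation through central supports ($c_A(p)c_A(q)=0$, hence $p=c_A(p)$ and $q=c_A(q)$ are central in $A$). The genuine gap is in your closing step. In the notation of \cite[\S3]{Dykema:MathProcCambPhilSoc04}, which Lemma \ref{L11} adopts, $A$ is \emph{not} the ambient amalgamated free product: it is the atomic (multimatrix) free-product component containing the abelian algebra $B=\sum_{i\in I}^\oplus\mathbb{C}p_i$ over which the amalgamation is performed. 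In the application inside Proposition \ref{P9} this $A$ is $M_1\rtimes_{\sigma^{\varphi_1,\Gamma}}G\cong M_1\,\bar{\otimes}\,\ell^\infty(\Gamma)$, whose center is a large atomic algebra, certainly not $\mathbb{C}$; so the factoriality you invoke is not available. Had $A$ really been the factor $M\rtimes_{\sigma^{\varphi,\Gamma}}G$, the lemma would be trivially true but useless: in a factor $p_oAp_i\neq\{0\}$ for every $i$ (all central supports are $1$), so $I=I_0\cup I_1$ at the first step, whereas the whole point of the decomposition $I=\bigcup_n I_n$ is to record connectivity of the atoms $p_i$ \emph{inside the small algebra} $A$, which is what produces the substitute for the condition ($\diamondsuit$) in \S3.

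The correct way to finish is to observe that $p(\infty)$, being a sum of the atoms $p_i$, lies in $B$, and then to invoke the standing hypothesis of that setting, $B\cap\mathcal{Z}(A)=\mathbb{C}$; this is exactly the condition verified in the proof of Proposition \ref{P9} before applying \cite[Corollary 3.2 (B)]{Dykema:MathProcCambPhilSoc04}, namely $\mathcal{Z}(M_1\rtimes_{\sigma^{\varphi_1,\Gamma}}G)\cap(\mathbb{C}\rtimes G)=\mathbb{C}$ -- and that is the only place where factoriality of $M\rtimes_{\sigma^{\varphi,\Gamma}}G$ enters, indirectly. With this, both $p(\infty)$ and $1-p(\infty)$ lie in $B\cap\mathcal{Z}(A)=\mathbb{C}$, so $p(\infty)\in\{0,1\}$, and $p(\infty)\geq p_o\neq0$ forces $p(\infty)=1$, i.e., $J=\emptyset$. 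Note that some such connectedness hypothesis is indispensable: for $A=B=\mathbb{C}^2$ with $o=1$ one has $I_1=\emptyset$ and the conclusion fails, so a proof quoting $\mathcal{Z}(A)=\mathbb{C}$ is resting on the wrong (and here false) assumption rather than on the one the setting actually provides.
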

\begin{proof} Set $I_\infty := \bigcup_{n \in \mathbb{N}_0} I_n$, $p := \sum_{i\in I_\infty} p_i$ and $q := \sum_{i \in I\setminus I_\infty} p_i$. Clearly $p \neq 0$, and by the construction of the $I_n$'s and $p(n)$'s we have $p(n) A p_i = \{0\}$ for every $i \in I\setminus I_\infty$. Since $p(n) \nearrow p$ as $n\to\infty$, we conclude that $pAp_i = \{0\}$ for every $i \in I\setminus I_\infty$. Similarly we have $pAq = \{0\}$, implying $c_p^A c_q^A = 0$. Since $p+q = 1$, $p \leq c_A(p)$, $q \leq c_A(q)$ and $c_A(p) c_A(q) = 0$, we observe that $p = c_A(p), q = c_A(q) \in B \cap \mathcal{Z}(A) = \mathbb{C}$, and hence $q = 0$, implying $I = I_\infty$.  
\end{proof}  

The decomposition $I = \bigcup_{n\in\mathbb{N}_0} I_n$ is used, and the new well-ordering on $I$ is made in such a way that every $I_n$ is $\{(n,1),(n,2),\dots\}$ and that $(n_1,m_1) \prec (n_2,m_2)$ is defined by $n_1 < n_2$, or $n_1=n_2$ and $m_1 < m_2$. (Note that if all $I_n$'s are finite sets, our ordering can become a desired choice of enumeration on $I$ with ($\diamondsuit$), and thus the proof of \cite[Proposition 3.1]{Dykema:MathProcCambPhilSoc04} works just as it is.) With this new ordering and replacing $p_1$ by $p_{(0,1)} = p(0) = p_o$ the same argument as in \cite[Proposition 3.1]{Dykema:MathProcCambPhilSoc04} still essentially works with reading the `$m$' and `$m+1$' there as $(n,m), (n,m+1) \in I_n$, respectively, since the same assertion ($\diamondsuit$) holds for our new ordering. To be more precise the following slight modifications are necessary: For each $n=1,2,\dots$ we consider the new element $(n,0)$ strictly smaller than every element of $I_n$ and strictly larger than every element of $\bigcup_{s \leq n-1} I_s$, and set $\mathcal{P}((n,0)) := p(n-1) Q \vee p(n-1) A p(n-1)$ ({\it n.b.}~$p(n-1) := \sum_{s=0}^{n-1}\sum_{i \in I_s} p_i$ is nothing but $\sum_{i \prec (n,1)} p_i$). With $o = (0,1)$ (playing the r\^{o}le of `$1$' there) and identifying $m = (n,m)$, $m+1=(n,m+1)$ we follow the proof of \cite[Proposition 3.1]{Dykema:MathProcCambPhilSoc04} for a while. For each $k \in K_{(n,m)}^{(1)} = \{k(0),\dots,k(l),\dots\}$ (which is not empty thanks to our new ordering) one can find $\langle k \rangle \in I$ with $\langle k\rangle \prec (n,m+1)$ in such a way that a minimal (in $A$) subprojection, say $e_k$, of $p_{\langle k \rangle}$ is equivalent (in $A$) to a subprojection of $p_{(n,m+1)}$. By the argument there, we get \cite[Eq.(11)]{Dykema:MathProcCambPhilSoc04} with replacing $p_{[1,m]}$, $\mathcal{P}(m)$ and $\beta[1,m]$ there by $p_{\{o,\langle k(0)\rangle\}} := p_o + p_{\langle k(0)\rangle}$, $p_{\{o,\langle k(0)\rangle\}}\mathcal{P}(n,m)p_{\{o,\langle k(0)\rangle\}}$ and $\beta(o)+\beta(\langle k(0)\rangle)$, respectively, and hence
\begin{align*} 
p_o\mathcal{R}(0)p_o &\overset{\sim}{\longrightarrow} 
p_o\mathcal{P}(n,m) p_o \star \Big[\frac{\gamma(n,m+1)}{\beta(o)},\mathcal{N}(n,m+1)_{\frac{\gamma(n,m+1)}{\beta(n,m+1)}}\Big] 
\\
&\overset{\sim}{\longrightarrow} 
p_o\mathcal{P}(n,m) p_o \star \Big[\frac{\gamma(n,m+1)}{\beta(o)},Q(n,m+1)_{\frac{\gamma(n,m+1)}{\beta(n,m+1)}}\Big]\star L\big(\mathbb{F}_{(\frac{\beta(n,m+1)}{\beta(o)})^2 s(n,m+1)}\big)
\end{align*} 
with $\gamma(m+1) := \mathrm{Tr}_A(e_{k(0)})$ and $s(n,m+1) := \mathrm{fdim}(p_{(n,m+1)}Ap_{(n,m+1)})$ (Dykema's free dimension or the free entropy dimension which is well-defined in this case) in the normalized trace obtained from $\mathrm{Tr}_A$, where \cite[Lemma 4.7, Proposition 4.8, Theorem 3.9]{Dykema:JFA02} are used. Similarly, one has \cite[Eq.(12)]{Dykema:MathProcCambPhilSoc04} with replacing $p_{[1,m]}$ and $\beta[1,m]$ by $p_o + p_{\langle k(l)\rangle}$ and $\beta(o)+\beta(\langle k(l)\rangle)$, respectively, and thus
\begin{align*} 
p_o\mathcal{R}(l)p_o \overset{\sim}{\longrightarrow} 
p_o\mathcal{R}(l-1) p_o \star L\big(\mathbb{F}_{\frac{\alpha(k(l))}{\beta(o))^2}}\big).  
\end{align*} 
These two isomorphisms immediately give \cite[Eq.(14)]{Dykema:MathProcCambPhilSoc04} with our notations. In this way, one can avoid the use of `$\beta[o,m]$' which may become $+\infty$ in our ordering. By taking the inductive limit as $m\to\infty$ on $I_n$ we get, with our notations,  
\begin{align*}
p_o\mathcal{P}((n+1,0))p_o \overset{\sim}{\longrightarrow} 
p_o\mathcal{P}((n,0)) p_o\star L(\mathbb{F}_{r_n}) \star \Big(\bigstar_{i\in I_n} \Big[\frac{\gamma(i)}{\beta(o)},Q(i)_{\frac{\gamma(i)}{\beta(i)}}\Big]\Big)
\end{align*}
with $r_n := \frac{1}{\beta(o)^2}\sum_{i \in I_n}(\beta(i)^2 - \gamma(i)^2 - \sum_{j \in J_i} \alpha(j)^2)$, where $J_o := \{ j \in J\,|\, q_j p_o \neq 0\}$, $J_i := \{ j \in J\,|\, q_j p_i \neq 0,\, q_j p_{i'} = 0\,(i' \prec i)\}$ ($i \in I\setminus\{o\}$), and $\alpha(j) := \mathrm{Tr}_A(f)$ with a (any) minimal projection $f \in Aq_j$. Since the $\mathcal{P}((n,0))$, $n=1,2,\dots$, generate the whole $\mathcal{P}$ as a von Neumann algebra, taking the inductive limit as $n\to\infty$ again one gets the desired formula:  
\begin{equation*}
p_o \mathcal{P}p_o \cong 
Q(o) \star L(\mathbb{F}_r) \star \Big(\bigstar_{i \in I\setminus\{o\}}\Big[\frac{\gamma(i)}{\beta(o)},Q(i)_{\frac{\gamma(i)}{\beta(i)}}\Big]\Big)
\end{equation*}
with $r = \frac{1}{\beta(o)^2}\big((\beta(o)^2 - \sum_{j \in J_o}\alpha(j)^2) + \sum_{i\in I\setminus\{o\}}(\beta(i)^2 - \gamma(i)^2 - \sum_{j \in J_i} \alpha(j)^2)\big)$. In this way, the argument of \cite[Proposition 3.1]{Dykema:MathProcCambPhilSoc04} still essentially works without ($\diamondsuit$). Finally, we remark that {\it $\gamma(i)$ can be chosen to be $\mathrm{Tr}_A(e)$ of any minimal subprojection $e$ of some $p_{i'}$ with $i' \prec i$, which is equivalent {\rm(}in $A${\rm)} to a subprojection of $p_i$}. 

\section{A simplified proof of Proposition \ref{P2}}

We will prove Proposition \ref{P2} in a simplified way, by utilizing only \cite{Dykema:DukeMathJ93} and \cite{Dykema:AmerJMath95}. Our main concerns here are delicate type II$_\infty$ problems, since the treatments on such problems in \cite{Redelmeier:arXiv:1207.1117} are sketchy. Our proof is indeed simplified but itself does not work for re-proving the full assertions in \cite{Dykema:AmerJMath95},\cite{Dykema:BLMS11},\cite{DykemaRedelmeier:arXiv11}. Assume that the von Neumann algebras in question have separable preduals. 

\subsection{Hyperfiniteness along atomic abelian subalgebras}
 
Let $M$ be a hyperfinite, semifinite von Neumann algebra equipped with a faithful normal (semifinite) trace $\mathrm{tr}$, and $D = \sum_{i\geq 1}^\oplus \mathbb{C}p_i$ be its unital, commutative, atomic von Neumann subalgebra. Let $\mathrm{ctr}$ be a generalized center-valued trace so that $\mathrm{tr} = \varphi\circ\mathrm{ctr}$ with a fixed faithful normal state $\varphi$ on $\mathcal{Z}(M)$ (see \cite[p.329--332]{Takesaki:Book},\cite[Theorem 2.7 and its remark]{Haagerup-JFA79-1}). Assume that all the $\mathrm{tr}(p_i)$ are finite. Although the next proposition looks easy to prove, its proof is unexpectedly involved. The proof below seems new, and giving it is one of the main aims of this section. 

\begin{proposition}\label{P-a1} There is a sequence of finite dimensional $*$-subalgebras $A_1 \subseteq A_2 \subseteq A_3 \subseteq \cdots$ of $M$ with the following properties{\rm:}
\begin{itemize} 
\item $q_n : = \sum_{i \leq n} p_i \nearrow 1 = 1_M$ as $n\to\infty$ and $q_n = 1_{A_n}$. 
\item $q_n \in D$ and $Dq_n \subseteq A_n$ for every $n \geq 1$. 
\item all the $A_n$ generate the whole $M$ as von Neumann algebra. 
\end{itemize} 
If $M$ is of type II, then the first subalgebras $A_1$ can be chosen to be non-trivial. 
\end{proposition}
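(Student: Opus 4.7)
The plan is to build the $A_n$ inductively inside the finite corners $q_n M q_n$, combining the hyperfiniteness of $M$ with a Christensen-type perturbation argument. Since $\mathrm{tr}(q_n) < \infty$, each $q_n M q_n$ is a finite hyperfinite von Neumann algebra with separable predual, carrying the finite trace $\tau_n := \mathrm{tr}\!\upharpoonright_{q_n M q_n}$. I would fix a $\sigma$-strong*-dense sequence $(x_k) \subseteq M$; since $q_n x q_n \to x$ $\sigma$-strongly* for every $x \in M$, the third bulleted condition follows once one arranges that at each stage $n+1$ every $q_{n+1} x_k q_{n+1}$ with $k \leq n+1$ lies within $1/(n+1)$ of $A_{n+1}$ in $\|\cdot\|_{\tau_{n+1}}$.

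The main technical step, and the principal obstacle in the argument, will be the following \emph{extension lemma}: if $N$ is a finite hyperfinite von Neumann algebra with separable predual and faithful normal trace $\tau$, $B \subseteq N$ a finite dimensional $*$-subalgebra with $1_B = 1_N$, $F \subseteq N$ finite and $\varepsilon > 0$, then there exists a finite dimensional $*$-subalgebra $A$ with $B \subseteq A \subseteq N$, $1_A = 1_N$, satisfying $\|x - E_A(x)\|_\tau < \varepsilon$ for every $x \in F$, where $E_A$ is the $\tau$-preserving conditional expectation. Granting this lemma, the induction is immediate: given $A_n \subseteq q_n M q_n$, the sum $B_{n+1} := A_n \oplus \mathbb{C} p_{n+1}$ (direct because $p_{n+1} \perp q_n$) would be a finite dimensional $*$-subalgebra of $q_{n+1} M q_{n+1}$ with unit $q_{n+1}$ containing both $A_n$ and $Dq_{n+1}$, and applying the lemma in $q_{n+1} M q_{n+1}$ to $B_{n+1}$ with $F = \{q_{n+1} x_k q_{n+1} : k \leq n+1\}$ and $\varepsilon = 1/(n+1)$ would produce the desired $A_{n+1}$.

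To establish the extension lemma itself, I would fix an increasing chain $(D_k)$ of finite dimensional $*$-subalgebras of $N$ with $1_{D_k} = 1_N$ and $\sigma$-strong*-dense union (available by hyperfiniteness of $N$), and for any prescribed $\delta > 0$ pick $k$ so large that every generator of $B$ and every element of $F$ lies within $\delta$ of $D_k$ in $\|\cdot\|_\tau$. Christensen's perturbation theorem for finite dimensional subalgebras of finite von Neumann algebras (Christensen, Math.\ Ann.\ 243 (1979)) then produces a unitary $u \in U(N)$ with $\|u - 1\|_\tau$ controlled by $\delta$ and the dimension of $B$, satisfying $u B u^* \subseteq D_k$. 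Setting $A := u^* D_k u$, one has $B \subseteq A$ finite dimensional with $1_A = 1_N$, and since $E_A = \mathrm{Ad}(u^*)\circ E_{D_k}\circ\mathrm{Ad}(u)$, a routine triangle-inequality computation yields $\|x - E_A(x)\|_\tau \leq C\|x\|\,\|u - 1\|_\tau + \|x - E_{D_k}(x)\|_\tau$ for a universal constant $C$; taking $\delta$ small enough and $k$ large enough then achieves $\|x - E_A(x)\|_\tau < \varepsilon$ for every $x \in F$.

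For the type II addendum, if $M$ is of type II then $p_1$ cannot be a minimal projection in $M$, so $p_1 M p_1 \supsetneq \mathbb{C} p_1$ contains some non-trivial subprojection $0 < e < p_1$, and one would simply start the induction at $A_1 := \mathbb{C} e \oplus \mathbb{C}(p_1 - e) \supsetneq Dp_1$; the inductive step above then produces $A_n$'s that contain this non-trivial $A_1$ throughout.
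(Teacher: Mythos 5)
Your outer scaffolding (inducting over the finite corners $q_nMq_n$, enlarging $A_n\oplus\mathbb{C}p_{n+1}$, the density bookkeeping for generation, and the type II remark about $A_1$) is reasonable and close in spirit to what the paper does. The problem is the step you yourself flag as the main one: the ``extension lemma'' as you prove it. Christensen-type perturbation does \emph{not} give, from $\Vert\cdot\Vert_\tau$-near containment of $B$ in a member $D_k$ of a fixed AFD filtration, a unitary $u\in N$ with $uBu^*\subseteq D_k$. There is a trace obstruction, and in the present setting a center-valued trace obstruction. Concretely: in $N=L^\infty[0,1]$ take $p=\chi_{[0,1/3]}$, $B=\mathbb{C}p+\mathbb{C}p^\perp$ and $D_k$ the level-$k$ dyadic step functions; then every element of the unit ball of $B$ is within $2\cdot2^{-k/2}$ of $D_k$ in $\Vert\cdot\Vert_2$, yet no unitary of the abelian algebra $N$ can move $B$ into $D_k$, since conjugation is trivial and $p\notin D_k$. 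The same obstruction occurs in the hyperfinite II$_1$ factor: a projection of trace $1/3$ can be $\Vert\cdot\Vert_2$-close to a unital copy of $M_{2^k}(\mathbb{C})$, but cannot be unitarily conjugated into it because that copy contains no projection of trace $1/3$. What near-inclusion theorems give in this generality is a $*$-homomorphism of $B$ into $D_k$ close to the inclusion (it may move traces slightly), and that is useless here: you need $B\subseteq A$ \emph{exactly}, because the $p_i$ must lie in $A_n$ on the nose and the $A_n$ must be honestly nested.

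This is precisely why the paper calls the proposition ``unexpectedly involved'': the prescribed projections $p_i$ have arbitrary, generally non-scalar, center-valued traces ($\mathrm{ctr}(p_i)\notin\mathcal{Z}(M)$ in general), so a pre-chosen filtration $(D_k)$ cannot be expected to contain projections matching them, and no inner perturbation can repair that. The fix has to go the other way: one must \emph{build} the finite dimensional approximants around the given projections, which requires first cutting the $p_i$ into pieces whose center-valued traces are dyadic multiples of central projections (the paper's Lemmas \ref{L-a2} and \ref{L-a3}), then producing matrix unit systems through these pieces by comparison theory ([Takesaki, Proposition V.1.35]) and absorbing a dense set using the local structure $fMf\cong R\,\bar{\otimes}\,\mathcal{Z}(fMf)=\varinjlim M_{2^d}(\mathbb{C})\otimes\mathcal{Z}_d$ (Lemma \ref{L-a4}), with a separate treatment of the type I summand. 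Choosing the filtration ``adapted to the $p_i$'' from the start is not available to you either, since that is exactly the statement being proved. So while the extension lemma you postulate is in essence true (it is more or less equivalent to the proposition), your proposed proof of it does not work, and this is where the real content of the proposition lies.
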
 

Up until the end of the proof of Lemma \ref{L-a4} we assume that $M$ is of type II. Remark that \emph{$\mathrm{ctr}(p_i) \in \mathcal{Z}(M)$ does not hold in general}, and hence we need the next lemma. 

\begin{lemma}\label{L-a2} For every $i \in I$ there exists a sequence of projections $p_{i,l} \in M$ so that $p_i = \sum_{l\geq 1} p_{i,l}$ and $\mathrm{ctr}(p_{i,l}) \leq 1$ for every $l \geq 1$. 
\end{lemma}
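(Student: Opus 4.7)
The plan is to decompose $p_i$ by a spectral cut-off of the central element $\mathrm{ctr}(p_i)$, and then exploit the division property of type II von Neumann algebras on each piece. The first observation is that, although $\mathrm{ctr}(p_i)$ belongs only to the extended positive part of $\mathcal{Z}(M)$ a priori, the hypothesis $\mathrm{tr}(p_i) = \varphi(\mathrm{ctr}(p_i)) < \infty$ combined with the faithfulness of $\varphi$ forces the spectral mass of $\mathrm{ctr}(p_i)$ at $\{+\infty\}$ to vanish; hence the spectral projections $z_n := \chi_{[n-1,n)}(\mathrm{ctr}(p_i)) \in \mathcal{Z}(M)$, $n \geq 1$, form a partition of $1$.

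Next I set $r_n := z_n p_i$, which is a projection since $z_n$ is central and commutes with $p_i$. The $\mathcal{Z}(M)$-linearity of $\mathrm{ctr}$ gives $\mathrm{ctr}(r_n) = z_n\,\mathrm{ctr}(p_i) \leq n\,z_n \in \mathcal{Z}(M)^+$, so $r_n$ is a finite projection in the type II semifinite algebra $z_n M$. By the standard division property, I can split $r_n$ as $r_{n,1} + \cdots + r_{n,n}$ with the $r_{n,k}$ pairwise orthogonal and mutually equivalent in $z_n M$; equivalence then yields $\mathrm{ctr}(r_{n,k}) = \tfrac{1}{n}\mathrm{ctr}(r_n) \leq z_n \leq 1$. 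Relabelling the countable double family $\{r_{n,k}\}_{n \geq 1,\,1 \leq k \leq n}$ as a single sequence $(p_{i,l})_{l \geq 1}$ gives $\sum_l p_{i,l} = \sum_n z_n p_i = p_i$, as required.

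The only step needing genuine care is the first one, namely making sense of $\mathrm{ctr}(p_i)$ as a central object admitting a functional calculus whose spectral projections exhaust $1$. This is precisely where the finiteness $\mathrm{tr}(p_i) < \infty$ plays its sole role; once the $z_n$ are in hand, the subsequent partition of each $r_n$ is routine type II manipulation, and the assumption that $M$ has a semifinite trace is exactly what allows the center-valued trace to be computed centrally on each $z_n M$ without any pathology.
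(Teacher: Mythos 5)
Your argument is correct, and it takes a genuinely more direct route than the paper's. The paper argues locally: below an arbitrary nonzero subprojection $p\leq p_i$ it produces (via a spectral cut-off $e([0,2m])$ of $\mathrm{ctr}(p)$ and a division of $pe$ into $2m$ equivalent pieces, Takesaki's Proposition V.1.35) one nonzero subprojection with $\mathrm{ctr}\leq 1$, and then exhausts $p_i$ by a maximal orthogonal family, invoking $\sigma$-finiteness (separable predual) to make that family a sequence. You instead make one global central partition $1=\sum_n z_n$ out of the spectral projections $\chi_{[n-1,n)}(\mathrm{ctr}(p_i))$ — legitimate because $\varphi(\mathrm{ctr}(p_i))=\mathrm{tr}(p_i)<\infty$ and $\varphi$ is faithful, so $\mathrm{ctr}(p_i)$ has no infinite part and is affiliated with $\mathcal{Z}(M)$ — and then divide each $z_np_i$ into $n$ equivalent orthogonal pieces, which is available since $z_nM$ is type II (the standing hypothesis in force here), exactly the same V.1.35 the paper uses. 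The remaining ingredients you rely on, namely the bimodule property $\mathrm{ctr}(z_nx)=z_n\mathrm{ctr}(x)$ and the tracial property $\mathrm{ctr}(v^*v)=\mathrm{ctr}(vv^*)$ giving equal center-valued traces for equivalent projections, are also used (implicitly) in the paper's proof, so nothing new is being assumed. What your version buys is the elimination of the maximality/exhaustion step and of the appeal to $\sigma$-finiteness: the family $\{r_{n,k}\}$ is countable by construction. (Minor remarks: the finiteness of $r_n$ that you assert is true but not needed, since the $n$-fold division requires only that $z_nM$ be type II; and it is worth saying explicitly that $\chi_{[0,1)}$ absorbs the kernel of $\mathrm{ctr}(p_i)$, so the $z_n$ really do sum to $1$.)
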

\begin{proof} Let $p \in \mathcal{Z}(M)^p$ satisfy $p \leq p_i$. In view of that $\mathrm{ctr}$ is an operator-valued weight it follows from $\mathrm{tr}(p_i) < +\infty$ that $\mathrm{ctr}(p)$ must be a (possibly unbounded) positive self-adjoint operator affiliated with $\mathcal{Z}(M)$. Hence we have the spectral decomposition $\mathrm{ctr}(p) = \int_0^{+\infty}\lambda\,e(d\lambda)$. Then $q := pe \neq 0$ for some $e := e([0,2m]) \in \mathcal{Z}(M)^p$ with $m > 0$. By \cite[Proposition V.1.35]{Takesaki:Book} there exist $2m$ equivalent projections $f_1,\dots,f_{2m}$ whose sum is $q$, and hence $f_1 \leq p$ and $\mathrm{ctr}(f_1) \leq e \leq 1$ hold. Choose a maximal orthogonal family $\{ e_\lambda\,|\,\lambda \in \Lambda\}$ of projections in $M$ such that $e_\lambda \leq p_i$ and $\mathrm{ctr}(e_\lambda) \leq 1$ hold for every $\lambda$. If $p := p_i - \sum_\lambda e_\lambda \neq 0$, then what we proved above causes a contradiction to the maximality. Hence we have $p_i = \sum_\lambda e_\lambda$, and the desired assertion follows since our $M$ has separable predual (hence it is $\sigma$-finite).     
\end{proof} 

\begin{lemma}\label{L-a3} For every $i,l \geq 1$ there exist two sequences of projections $p_{i,l,k} \in M $, $k \geq 1$, and $c_{i,l,k} \in \mathcal{Z}(M)$, $k \geq 1$, so that $p_{i,l} = \sum_{k\geq 1} p_{i,l,k}$ and $\mathrm{ctr}(p_{i,l,k}) = 2^{-k} c_{i,l,k}$ for every $k \geq 1$. 
\end{lemma}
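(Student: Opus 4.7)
The plan is to expand $a := \mathrm{ctr}(p_{i,l})$, which by Lemma \ref{L-a2} is an element of the positive cone of $\mathcal{Z}(M)$ bounded by $1$, as a ``binary series'' in $\mathcal{Z}(M)$, and then realize this expansion at the projection level inside $p_{i,l}$ using the type II structure of $M$.

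For the first step, observe that the scalar function $\lambda \mapsto \lambda$ on $[0,1]$ decomposes as $\lambda = \sum_{k\geq1} 2^{-k}\chi_{E_k}(\lambda)$, where $E_k\subset[0,1]$ is the Borel set on which the $k$-th binary digit of $\lambda$ equals $1$; the convergence is uniform on $[0,1]$. Applying the Borel functional calculus in the abelian von Neumann algebra $\mathcal{Z}(M)$ to $a$, I set $c_k := \chi_{E_k}(a) \in \mathcal{Z}(M)^p$ and obtain $a = \sum_{k\geq1} 2^{-k}c_k$ with convergence in operator norm, where each $c_k$ is a central projection (not necessarily mutually orthogonal).

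For the second step, I invoke the standard realization fact: in a type II von Neumann algebra with $\sigma$-finite center, for every projection $p$ with $\mathrm{ctr}(p)<\infty$ and every $b \in \mathcal{Z}(M)^+$ with $0\leq b\leq\mathrm{ctr}(p)$, there exists a subprojection $q\leq p$ with $\mathrm{ctr}(q)=b$. This follows from the halving lemma together with a Zorn/exhaustion argument, see e.g.\ \cite[Chapter V]{Takesaki:Book}. Using this inductively, I choose $p_{i,l,1}\leq p_{i,l}$ with $\mathrm{ctr}(p_{i,l,1})=2^{-1}c_1$, then $p_{i,l,2}\leq p_{i,l}-p_{i,l,1}$ with $\mathrm{ctr}(p_{i,l,2})=2^{-2}c_2$ (possible because $\mathrm{ctr}(p_{i,l}-p_{i,l,1})=\sum_{k\geq2}2^{-k}c_k \geq 2^{-2}c_2$), and so on, producing pairwise orthogonal subprojections $p_{i,l,k}\leq p_{i,l}$ with $\mathrm{ctr}(p_{i,l,k})=2^{-k}c_k$.

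For the final step, let $q_\infty := p_{i,l}-\sum_{k\geq1}p_{i,l,k}$. Normality of $\mathrm{ctr}$ on monotone sums gives $\mathrm{ctr}(\sum_k p_{i,l,k})=\sum_k 2^{-k}c_k = a = \mathrm{ctr}(p_{i,l})$, hence $\mathrm{ctr}(q_\infty)=0$, so $\mathrm{tr}(q_\infty)=\varphi\circ\mathrm{ctr}(q_\infty)=0$, and $q_\infty=0$ by faithfulness of $\mathrm{tr}$. Setting $c_{i,l,k}:=c_k$ then yields the required sequences. The only non-formal point, and thus the main obstacle, is the realization lemma in the second paragraph: although well known, its correct form for (non-factor) type II von Neumann algebras needs the halving lemma together with a maximality argument, and this is where the type II hypothesis enters essentially; all other manipulations are formal consequences of functional calculus in $\mathcal{Z}(M)$ and the faithfulness of $\mathrm{tr}$.
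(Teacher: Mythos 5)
Your proposal is correct, and its skeleton is the same as the paper's: expand $\mathrm{ctr}(p_{i,l})\le 1$ dyadically into central projections via functional calculus, realize each term $2^{-k}c_k$ as $\mathrm{ctr}$ of a subprojection of $p_{i,l}$ (orthogonally, by induction), and kill the remainder by faithfulness and normality. The only genuine difference is how the realization step is justified. The paper has no quotable statement for the unnormalized generalized center-valued trace $\mathrm{ctr}$, so it cuts the support of $\mathrm{ctr}(p_{i,l})$ into central pieces $s_m$ on which $\mathrm{ctr}(p_{i,l})\ge m^{-1}s_m$, inverts there to pass to the honest normalized center-valued trace $\tau_m$ of the finite corner $p_{i,l}s_m M p_{i,l}s_m$, applies \cite[Corollary 3.14]{Kadison:AmerJMath84} piecewise, and reassembles $p_{i,l,k}:=\sum_m e_k^{(m)}$. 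You instead invoke a realization lemma phrased directly for $\mathrm{ctr}$: every central $b$ with $0\le b\le\mathrm{ctr}(p)$ equals $\mathrm{ctr}(q)$ for some $q\le p$. That statement is true, and your sketched route does work: take a maximal $q_0\le p$ with $\mathrm{ctr}(q_0)\le b$ (chains have suprema by normality); if $\mathrm{ctr}(q_0)\neq b$, pick a spectral projection $z$ of $b-\mathrm{ctr}(q_0)$ with $b-\mathrm{ctr}(q_0)\ge\varepsilon z$, note $(p-q_0)z\neq 0$, and adjoin a repeatedly halved subprojection of $(p-q_0)z$ whose $\mathrm{ctr}$ is at most $\varepsilon z$, contradicting maximality. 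This buys a cleaner statement and hides the division by $\mathrm{ctr}(p_{i,l})$ — which is exactly what forces the paper's $s_m$-decomposition — inside the exhaustion argument, at the cost that the lemma is not literally in the literature in the form you cite: \cite[Chapter V]{Takesaki:Book} and \cite[Corollary 3.14]{Kadison:AmerJMath84} concern the normalized center-valued trace of a finite algebra. So either write out the halving/maximality argument above, or localize and normalize as the paper does; with that made explicit, your proof is complete.
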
 
\begin{proof} Since $0 \leq \mathrm{ctr}(p_{i,l}) \leq 1$, its spectral decomposition gives $\mathrm{ctr}(p_{i,l}) = \sum_{k\geq1} 2^{-k} c_{i,l,k}$ with $c_{i,l,k} \in \mathcal{Z}(M)^p$. Let $s$ be the support projection of $\mathrm{ctr}(p_{i,l})$, and decompose $s = \sum_{m\geq 2} s_m$ in $\mathcal{Z}(M)^p$ so that $\mathrm{ctr}(p_{i,l}s_m) = \mathrm{ctr}(p_{i,l})s_m \geq m^{-1}s_m$ for every $m \geq 2$. For each $m$ there exists a positive $z_m \in \mathcal{Z}(M)$ so that $z_m\mathrm{ctr}(p_i s_m) = s_m$. Consider the normalized center-valued trace $\tau_m : x \in p_{i,l}s_m M p_{i,l}s_m \mapsto \mathrm{ctr}(x)z_m p_i s_m$. Using \cite[Corollary 3.14]{Kadison:AmerJMath84} inductively one can find an orthogonal sequence of projections $e_k^{(m)} \in p_{i,l}s_m M p_{i,l}s_m$, $k \geq 1$, so that $\tau_m(e_k^{(m)}) = 2^{-k} c_{i,l,k} z_m p_{i,l} s_m$ for every $k \geq 1$. Then, $\mathrm{ctr}(e_k^{(m)})p_{i,l} = \mathrm{ctr}(p_{i,l}s_m)\tau_m(e_k^{(m)}) = 2^{-k} c_{i,l,k}s_m p_{i,l}$ so that $\mathrm{ctr}(e_k^{(m)}) = 2^{-k} c_{i,l,k}s_m$. Letting $p_{i,l,k} := \sum_{m \geq 2} e_k^{(m)}$ one has $p_{i,l,k} \leq p_{i,l}$, $p_{i,l,k}p_{i,l,k'} = 0$ if $k\neq k'$, and $\mathrm{ctr}(p_{i,l,k}) = 2^{-k} c_{i,l,k}$. One has $\mathrm{ctr}(p_{i,l} -\sum_{k\geq1}p_{i,l,k}) = \mathrm{ctr}(p_{i,l}) - \sum_{k\geq1} \mathrm{ctr}(p_{i,l,k}) = \mathrm{ctr}(p_{i,l}) - \sum_{k\geq1} 2^{-k} c_{i,l,k} = 0$, and hence $p_{i,l} = \sum_{k\geq1} p_{i,l,k}$. 
\end{proof}

Remark that $c_M(p_{i,l,k}) = c_{i,l,k}$; this can easily be checked. 

\begin{lemma}\label{L-a4} Let $B_0$ be a finite dimensional $*$-subalgebra of $M$ with a matrix unit system $\{e_0^{(s)}(i,j)\}_{(i,j),s}$,  $e_1,\dots,e_n$ be an orthogonal, finite family of projections in $M$ and $x_1,\dots,x_{n'}$ be a finite family of elements in $M$ such that 
\begin{itemize} 
\item[(a)] $\mathrm{ctr}(e_0^{(s)}(i,j)) = \delta_{ij}\,2^{-k_0^{(s)}} c_0^{(s)}$ with some $k_0^{(s)} \in \mathbb{N}$ and $c_0^{(s)} \in \mathcal{Z}(M)^p$ for every $(i,j), s${\rm;} 
\item[(b)] $e_i \leq 1-1_{B_0}$, and $\mathrm{ctr}(e_i) = 2^{-k_i}c_i$ with some $k_i \in \mathbb{N}$ and $c_i \in \mathcal{Z}(M)^p$ for every $1 \leq i \leq n${\rm;}    
\item[(c)] $x_j = (1_{B_0}+\sum_{i=1}^n e_i)x_j(1_{B_0}+\sum_{i=1}^n e_i)$ for every $1\leq j\leq n'$. 
\end{itemize}
Then, for each $\varepsilon>0$ there exists a finite dimensional $*$-subalgebra $B_1$ of $M$ containing $B_0$ such that 
\begin{itemize} 
\item[(i)] $e_1,\dots,e_n \in B_1${\rm;} 
\item[(ii)] $1_{B_1} = 1_{B_0} + \sum_{i=1}^n e_i${\rm;}   
\item[(iii)] a matrix unit system $\{e_1^{(t)}(i,j)\}_{(i,j),t}$ of $B_1$ satisfies that $\mathrm{ctr}(e_1^{(t)}(i,j)) = \delta_{ij}\,2^{-k_1^{(t)}}c_1^{(t)}$ with some $k_1^{(t)} \in \mathbb{N}$ and $c_1^{(t)} \in \mathcal{Z}(M)^p$ for every $(i,j), t${\rm;} 
\item[(iv)] the distance $\mathrm{dist}_{\Vert-\Vert_\mathrm{tr}}(x_j,B_1)$ in the norm $\Vert-\Vert_\mathrm{tr}$ is less than $\varepsilon$ for every $1\leq j\leq n'$.   
\end{itemize} 
\end{lemma}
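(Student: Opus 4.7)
\smallskip\noindent\emph{Proof proposal.} The plan is to reduce everything to a finite compression of $M$, then apply hyperfiniteness there, and finally enforce the dyadic form of center-valued traces using the splitting technique already developed in Lemma \ref{L-a3}. Set $p := 1_{B_0}+\sum_{i=1}^n e_i$. From $\mathrm{tr} = \varphi\circ\mathrm{ctr}$ together with (a), (b) one has $\mathrm{tr}(p)<+\infty$, so $pMp$ is a finite hyperfinite von Neumann algebra.

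First I would normalize the center-valued traces of the given data. Choose $K\ge\max\bigl(\{k_0^{(s)}\}_s\cup\{k_i\}_{i=1}^n\bigr)$, and inside each relevant type II slice $Mc_0^{(s)}$ or $Mc_i$, iterate \cite[Corollary 3.14]{Kadison:AmerJMath84} as in the proof of Lemma \ref{L-a3} to split each diagonal matrix unit $e_0^{(s)}(i,i)$ and each $e_i$ into $2^{K-k}$ equivalent pieces of center-valued trace $2^{-K}$ times the appropriate central projection. Propagating each splitting via the partial isometries $e_0^{(s)}(i,1)$ preserves the matrix unit structure of $B_0$; the outcome is an orthogonal refinement $\{f_r\}_r$ of the diagonal of $B_0$ together with the $e_i$'s, each $f_r$ with center-valued trace of the form $2^{-K}c$.

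Next, hyperfiniteness of $pMp$ provides a finite dimensional unital $*$-subalgebra $C\subseteq pMp$ containing the refined matrix units and all the $f_r$ with $\mathrm{dist}_{\Vert-\Vert_{\mathrm{tr}}}(x_j,C) < \varepsilon/2$ for every $j$: take a finite dimensional approximant of each $x_j$ and then the unital $*$-algebra generated by these approximants together with the fixed finite data, cut down by the existing block projections. The final step is to upgrade $C$ to the desired $B_1$, which amounts to arranging that a minimal projection in each block of $B_1$ have center-valued trace of the form $2^{-k}c$ with $c\in\mathcal{Z}(M)^p$. Within each block of $C$ a minimal projection $e$ has $\mathrm{ctr}(e)\in\mathcal{Z}(M)^+$; applying the spectral expansion $\mathrm{ctr}(e)=\sum_{k\ge 1}2^{-k}c_k$ and the dyadic splitting of Lemma \ref{L-a3} subdivides $e$ into countably many pieces with the required dyadic form. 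Truncating at a sufficiently large $N$ and absorbing the residual into a supplementary dyadic-scale matrix summand by one further application of Lemma \ref{L-a3}, one obtains a finite dimensional $B_1\supseteq C$ satisfying (i), (ii), (iii); property (iv) is inherited from $C$ since $B_1\supseteq C$.

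The principal technical obstacle is the truncation step: the spectral expansion of $\mathrm{ctr}(e)$ is generically infinite, so one must cut off at some $N$ while simultaneously respecting (iii) (the residual must itself fit into a dyadic matrix block with center-valued trace $2^{-k}c$) and (iv) (no extra $\Vert-\Vert_{\mathrm{tr}}$-error above $\varepsilon$). Choosing $N$ large enough that the total residual trace is less than $\varepsilon/2$ and handling this residual by one additional halving yields both simultaneously. With this bookkeeping the remaining verification of (i)--(iv) is routine.
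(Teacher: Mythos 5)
Your overall strategy (compress to $pMp$, invoke hyperfiniteness, and enforce the dyadic form of the center-valued traces by the splitting device of Lemma \ref{L-a3}) is in the right spirit, but two of its key steps have genuine gaps. First, the assertion that hyperfiniteness of $pMp$ yields a finite dimensional $*$-subalgebra $C$ that both \emph{contains} the prescribed matrix units and projections and approximates the $x_j$ in $\Vert-\Vert_{\mathrm{tr}}$ is essentially the nontrivial content of the lemma (minus (iii)), and your justification does not establish it: the unital $*$-algebra generated by finite dimensional approximants of the $x_j$ together with the fixed data $B_0,e_1,\dots,e_n$ is in general infinite dimensional, and cutting down by block projections does not remedy this. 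What is needed is a compression trick: after cutting by the finite central partition generated by $\{c_0^{(s)}\}_s\cup\{c_i\}_i$ (so that on each central piece $c_2^{(r)}$ all the data have dyadic center-valued trace against the \emph{same} central projection), one can tie $B_0c_2^{(r)}$ and the $e_ic_2^{(r)}$ into a single $2^{k_2^{(r)}}\times2^{k_2^{(r)}}$ matrix unit system $\{f_{ij}^{(r)}\}$ whose diagonal sums to $(1_{B_0}+\sum_i e_i)c_2^{(r)}$, compress the $x_j$ into the single corner $f_{11}^{(r)}Mf_{11}^{(r)}$, approximate there, and then the algebra generated by the $f_{ij}^{(r)}$ and the corner approximant is automatically finite dimensional. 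Your normalization to a common dyadic level $2^{-K}$ does not substitute for this: pieces with center-valued traces $2^{-K}c$ and $2^{-K}c'$ for different central projections $c\neq c'$ are not equivalent, so they cannot be assembled into one matrix unit system without the preliminary central partition, and without that assembly the approximation step has no mechanism producing a finite dimensional algebra.

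Second, the post-hoc repair of (iii) does not work. If $e$ is a minimal projection of a generic finite dimensional approximant $C$, then after truncating the expansion $\mathrm{ctr}(e)=\sum_{k\geq1}2^{-k}c_k$ at level $N$ the residual subprojection has center-valued trace $\sum_{k>N}2^{-k}c_k$, which is not of the form $2^{-k}c$ with $c\in\mathcal{Z}(M)^p$, and no finite number of further halvings changes this; discarding the residual is not allowed either, since (ii) forces $1_{B_1}=1_{B_0}+\sum_{i=1}^n e_i$ exactly, so the residual would have to appear as a diagonal projection of $B_1$ and would violate (iii). This is precisely why the argument should not start from an arbitrary approximant: inside the corner $f_{11}^{(r)}Mf_{11}^{(r)}\cong R\,\bar{\otimes}\,\mathcal{Z}(f_{11}^{(r)}Mf_{11}^{(r)})$ (by \cite[Theorem XVI.1.5]{Takesaki:Book}) one chooses the approximant of the special form $M_{2^d}(\mathbb{C})\otimes\mathcal{Z}_d^{(r)}$ taken from the inductive limit decomposition, whose matrix units have center-valued trace exactly $\delta_{ij}\,2^{-(k_2^{(r)}+d)}c_2^{(r,u)}$ with $c_2^{(r,u)}\in\mathcal{Z}(M)^p$, so that (iii) holds by construction rather than by truncation. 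As it stands, your proposal leaves both the finite dimensionality of the approximating algebra and condition (iii) unproven.
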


\begin{proof} Let $c_2^{(r)}$, $r\in\mathfrak{R}$,  be the (finite) partition of $(\bigvee_s c_0^{(s)})\vee(\bigvee_i c_i)$ obtained from the finite commutative family $\{c_0^{(s)}\}_s \cup \{c_i\}_i$. Define $B_0^{(r)} := B_0 c_2^{(r)}$ with matrix units $e_0^{(s,r)}(i,j) := e_0^{(s)}(i,j)c_2^{(r)}$, and also define $e_i^{(r)} := e_i c_2^{(r)}$ and $x_j^{(r)} := x_j c_2^{(r)}$. Clearly, one has $\mathrm{ctr}(e_0^{(s,r)}(i,j)) = \delta_{ij}\,2^{-k_0^{(s)}}c_2^{(r)}$ (if $e_0^{(s,r)}(i,j)\neq0$) and $\mathrm{ctr}(e_i^{(r)}) = \delta_{ij}\,2^{-k_i}c_2^{(r)}$ (if $e_i^{(r)}\neq0$). For each $r$ one can choose, by e.g.~\cite[Proposition V.1.35]{Takesaki:Book}, a sufficiently large $k_2^{(r)} \in \mathbb{N}$ and a $2^{k_2^{(r)}}\times2^{k_2^{(r)}}$ matrix unit system $f_{ij}^{(r)}$, $1 \leq i,j \leq 2^d$, in such a way that  
\begin{itemize} 
\item $\mathrm{ctr}(f_{ij}^{(r)}) = \delta_{ij}\,2^{-k_2^{(r)}}\,c_2^{(r)}$, 
\item $B_0^{(r)}\cup\{e_i^{(r)}\,|\,1\leq i \leq n\}$ sits inside the $*$-subalgebra generated by $\{f_{ij}^{(r)}\}$,  
\item $\sum_{i=1}^{2^{k_2^{(r)}}}f_{ii}^{(r)} = 1_{B_0^{(r)}} + e_1^{(r)} + \cdots + e_n^{(r)}$.   
\end{itemize}
Let us consider the$f_{1i}^{(r)}x_j^{(r)}f_{i'1}^{(r)}$, $1 \leq i,i' \leq 2^{k_2^{(r)}}$, $1 \leq j \leq n'$, inside $f_{11}^{(r)}M f_{11}^{(r)} = f_{11}^{(r)}(M c_2^{(r)})f_{11}^{(r)}$. By \cite[Theorem XVI.1.5]{Takesaki:Book} 
$f_{11}^{(r)}M f_{11}^{(r)} \cong 
R\,\bar{\otimes}\,\mathcal{Z}(f_{11}^{(r)}M f_{11}^{(r)}) = \varinjlim\,M_{2^d}(\mathbb{C})\otimes\mathcal{Z}^{(r)}_d
$ ($d\to\infty$),  
where $R$ is the hyperfinite type II$_1$ factor,  and the $\mathcal{Z}_d^{(r)}$, $d \geq 1$, form an increasing sequence of finite dimensional unital $*$-subalgebras of $\mathcal{Z}(f_{11}^{(r)}M f_{11}^{(r)})$ which generates $\mathcal{Z}(f_{11}^{(r)}M f_{11}^{(r)})$. Therefore, we can choose a unital $*$-subalgebra $C^{(r)}$ of $f_{11}^{(r)}M f_{11}^{(r)}$ such that it is isomorphic to $M_{2^d}(\mathbb{C})\,\bar{\otimes}\,\mathcal{Z}^{(r)}_d$ with some $d \in \mathbb{N}$ and that $\mathrm{dist}_{\Vert-\Vert_\mathrm{tr}}(f_{1i}^{(r)}x_j^{(r)}f_{i'1}^{(r)},C^{(r)}) < \varepsilon/(2^{2k_2^{(r)}}|\mathfrak{R}|)$ for every $1 \leq i,i' \leq 2^{k_2^{(r)}}$ and $1 \leq j \leq n'$. Let $B_1^{(r)}$ be the $*$-subalgebra generated by the $f_{ij}^{(r)}$ and $C^{(r)}$, and set $B_1 := \sum_{r\in\mathfrak{R}}^\oplus B_1^{(r)}$. Remark that $2^{k_2^{(r)}}(\mathrm{ctr}\!\upharpoonright_{f_{11}^{(r)}Mf_{11}^{(r)}})f_{11}^{(r)}$ is the unique center-valued trace on $f_{11}^{(r)}Mf_{11}^{(r)}$, and hence it agrees, via the isomorphism, with the center-valued trace $\tau_R\,\bar{\otimes}\,\mathrm{Id}$ on $R\,\bar{\otimes}\,\mathcal{Z}(f_{11}^{(r)}M f_{11}^{(r)})$. Let $g_{ij}^{(r,u)}$, $1 \leq i,j \leq 2^d$, $1 \leq u \leq \dim(\mathcal{Z}_d^{(r)})$, be a matrix unit system obtained from a standard one in $M_{2^d}(\mathbb{C})\,\bar{\otimes}\,\mathcal{Z}_d^{(r)}$ via the isomorphism. Then $2^{k_2^{(r)}}\mathrm{ctr}(g_{ij}^{(r,u)})f_{11}^{(r)} = \delta_{ij}\,2^{-d}z_u^{(r)}$ so that $z_u^{(r)} \in \mathcal{Z}(f_{11}^{(r)}M f_{11}^{(r)})^p$ with $\sum_u z_u^{(r)} = f_{11}^{(r)}$. Since $c_2^{(r)}$ is the central support of $f_{11}^{(r)}$, the mapping $x \in \mathcal{Z}(M)c_2^{(r)} \mapsto xf_{11}^{(r)} \in \mathcal{Z}(M)f_{11}^{(r)} = \mathcal{Z}(f_{11}^{(r)}M f_{11}^{(r)})$ is a bijective $*$-homomorphism, and hence there exist unique, mutually orthogonal  $c_2^{(r,u)} \in \mathcal{Z}(M)^p$, $1 \leq u \leq \dim(\mathcal{Z}_d^{(r)})$, such that $z_u^{(r)} =  c_2^{(r,u)}f_{11}^{(r)}$ for $1 \leq u \leq \dim(\mathcal{Z}_d^{(r)})$. Hence $\mathrm{ctr}(g_{ij}^{(r,u)}) = \delta_{ij}\,2^{-(k_2^{(r)}+d)}c_2^{(r,u)}$ for $1 \leq i,j \leq 2^d$ and $1 \leq u \leq \dim(\mathcal{Z}_d^{(r)})$. The matrix units $f_{ij}^{(r)}$ and $g_{ij}^{(r,u)}$ give the desired matrix unit system $e_1^{(t)}(i,j)$ of $B_1$; in fact, it is easy to see that $B_1$ satisfies the first three desired conditions. Moreover, 
$\mathrm{dist}_{\Vert-\Vert_{\mathrm{tr}}}(x_j,B_1) 
\leq 
\sum_{r \in \mathfrak{R}}\sum_{i,i'=1}^{2^{k_2^{(r)}}} \mathrm{dist}_{\Vert-\Vert_{\mathrm{tr}}}(f_{1i}^{(r)}x_jf_{i'1}^{(r)},C^{(r)}) < \varepsilon$.       
\end{proof}
 
\begin{proof} (Proposition \ref{P-a1}) Firstly assume that $M$ is of type II. Choose a countable dense subset $\{x_j\,|\,j \geq 1\}$ of the closed unit ball of $M$ equipped with the $\sigma$-strong operator topology, and we may and do assume $x_1 = 1$. Choose the smallest $k_0$ so that $p_{1,1,k_0} \neq 0$, and set $q'_n := \sum_{i\leq n} \sum_{l\leq n}\sum_{k\leq n+k_0 -1} p_{i,l,k} \nearrow \sum_{i \in I}\sum_{l\geq1}\sum_{k\geq 1} p_{i,l,k} = 1$ as $n\to\infty$. By \cite[Proposition V.1.35]{Takesaki:Book} one can find a unital copy of $M_2(\mathbb{C})$ in $q'_1 M q'_1 = p_{1,1,k_0}Mp_{1,1,k_0}$, and let $C_1$ be such a copy of $M_2(\mathbb{C})$. Assume that we have already construct $C_1 \subseteq C_2 \subseteq \cdots \subseteq C_n$ in such a way that 
\begin{itemize} 
\item[(i)] $C_{n'}$ has a matrix unit system satisfying (a) in Lemma \ref{L-a4}, 
\item[(ii)] $1_{C_{n'}} = q'_{n'}$,  
\item[(iii)] the $p_{i,l,k}$, $i,l,k \leq n'$, are in $C_{n'}$, 
\item[(iv)] $\mathrm{dist}_{\Vert-\Vert_\mathrm{tr}}(q'_{n'} x_j q'_{n'},C_{n'}) < 1/n'$, $1 \leq j \leq n'$,  
\end{itemize}
for every $1 \leq n' \leq n$. Applying Lemma \ref{L-a4} to $C_n$ ($=B_0$ in Lemma \ref{L-a4}), the family of projections $p_{i,l,k}$ with $1\leq i \leq n+1$, $0 \leq l \leq n+1$, $1 \leq k \leq n+k_0$ such that at least one of $i = n+1$, $l = n+1$, $k=n+k_0$ holds (which plays a r\^ole of $\{e_i\}$ in Lemma \ref{L-a4}), $\{q'_{n+1}x_j q'_{n+1}\,|\,1 \leq j \leq n+1\}$ ($= \{x_j\}$ in Lemma \ref{L-a4}) and $\varepsilon := 1/(n+1)$, we get a finite dimensional $*$-subalgebra $C_{n+1}$ of $M$ that is larger than $C_n$ and satisfies the properties (i)--(iv) with $n'=n+1$. By induction we have obtained an increasing sequence $C_n$ such that the properties (i)--(iv) above hold with $n'=n$ for every $n$. 

Fix an arbitrary $n \geq 1$, and choose an arbitrary $x$ from the closed unit ball of $q'_n M q'_n$. For any $\varepsilon>0$ there exists $x_{j_0}$ so that $\Vert q'_n x_{j_0} q'_n - x \Vert_\mathrm{tr} < \varepsilon/2$. For a sufficiently large $n' \in \mathbb{N}$ with $1/n' < \varepsilon/2$, $n,j_0 \leq n'$ there exists $y \in C_{n'}$ such that $\Vert q'_{n'} x_{j_0} q'_{n'} - y\Vert_\mathrm{tr} < \varepsilon/2$.  Thus $\Vert q'_n x_{j_0} q'_n - q'_n y q'_n\Vert_\mathrm{tr} < \varepsilon/2$ so that $\Vert q'_n y q'_n - x\Vert_\mathrm{tr} < \varepsilon$. Consequently, $q'_n M q'_n$ is generated by all the $q'_n C_{n'} q'_n$, ${n'} \geq n$. Since $q'_n \nearrow 1$ as $n\to\infty$, we conclude that $M$ is generated by the $C_n$.        

Let $q_n := \sum_{i\leq n} p_i$, and then $q_n - q'_n = \sum_{i\leq n} \sum_{l \geq n+1\,\text{or}\atop k \geq n+1} p_{i,l,k}$. Set $p'_i := \sum_{l \geq n+1\,\text{or}\atop k \geq n+1}p_{i,l,k}$, $1 \leq i \leq n$, and define $A_n := C_n \oplus \sum_{i\leq n}^\oplus \mathbb{C}p'_i$, which is a desired one.      

\medskip
Secondly assume that $M$ is globally of type I$_m$ with $m \in \mathbb{N}\cup\{\infty\}$. Let $C$ be a MASA in $M$ that contains $D$. By \cite[Lemma 3.7, Lemma 3.9]{Kadison:AmerJMath84} $C$ is generated by an orthogonal (finite/infinite) sequence $e_k$ of abelian projections in $M$ such that $c_M(e_k)=1$ for every $k\geq1$ and $\sum_{k\geq1} e_k = 1$. By the proof of \cite[Proposition V.1.22]{Takesaki:Book} we may and do assume that $M = B(\mathcal{H})\,\bar{\otimes}\,\mathcal{Z} \supseteq C = \Delta\,\bar{\otimes}\,\mathcal{Z}$ with a commutative von Neumann algebra $\mathcal{Z}$ and $e_k = \delta_k\otimes1$, where $\dim(\mathcal{H}) = m$, $\Delta$ is the algebra of all diagonal bounded operators with respect to a fixed basis of $\mathcal{H}$ and the $\delta_k$'s are all the minimal projections in $\Delta$. Then $p_{i,k} = e_k p_i = \delta_k\otimes c_{i,k}$ with $c_{i,k} \in \mathcal{Z}^p$. Choose an increasing sequence $\mathcal{Z}_{0,n}$ of finite dimensional unital $*$-subalgebras of $\mathcal{Z}$ which generate $\mathcal{Z}$. Define $\mathcal{Z}_n$ to be the $*$-subalgebra of $\mathcal{Z}$ generated by $\mathcal{Z}_{0,n}$ and $\{c_{i,k}\,|\,i,k \leq n\}$. Then $\mathcal{Z}_n$  and $C_n := f_n B(\mathcal{H})f_n\,\bar{\otimes}\,\mathcal{Z}_n$ with $f_n := \sum_{k\leq n}\delta_k$ are finite dimensional. Note that $q'_n := \sum_{i\leq n}\sum_{k \leq n} p_{i,k} \in C_n$. The $q'_n C_n q'_n \oplus \sum_{i \leq n}^\oplus \mathbb{C}(\sum_{k > n} p_{i,k})$, $n\geq1$, give the desired $A_n$.  

\medskip
Finally the general case is quite easy to prove now by what we have proved so far thanks to \cite[Theorem V.1.19, Theorem V.1.27]{Takesaki:Book}. 
\end{proof} 
 
\subsection{Substandard embeddings and the class $\mathcal{R}_4$}

The next definition was introduced by Redelmeier \cite{Redelmeier:arXiv:1207.1117}. The class $\mathcal{R}_4$ was already appeared in Proposition \ref{P2}. 

\begin{definition}\label{D-a1}  Let $N$ and $M$ be in the class $\mathcal{R}_4$, i.e., $N = N_h \oplus \sum_{i \in I}^\oplus N_i$ and $M = M_h \oplus \sum_{j \in J}^\oplus M_i$, where $N_h, M_h$ are hyperfinite and the $N_i, M_j$ amplifications of interpolated free group factors. An injective normal $*$-homomorphism $\pi$ from $N$ to a corner of $M$ is said to be a substandard embedding if for each $i \in I$ there exist $j \in J$ and a non-zero finite projection $p \in N_i = N1_{N_i}$ such that $\pi(1_{N_i}) \leq 1_{M_j}$, $\pi(p)$ is finite in $M$, and $\pi\!\upharpoonright_{pNp} : pNp = p N_i p \to \pi(p)M\pi(p) = \pi(p)M_j\pi(p)$ is a standard embedding in the sense of Dykema \cite[Definition 4.1]{Dykema:DukeMathJ93} {\rm(}allowed to be an identity mapping{\rm)}. \end{definition} 

Remark that $j$ is determined uniquely from a given $i$ in the above definition. Thus $\pi$ gives a well-defined mapping $i \mapsto \pi_*(i)$ from $I$ into $J$. Note that $1_{N_i} \leq 1_{M_{\pi_*(i)}}$ for every $i \in I$, and hence $\sum_{i\in \pi_*^{-1}(\{j\})} 1_{N_i} \leq 1_{M_j}$ for every $j \in J$. In particular, $z_f^N := \sum_{i \in I} 1_{N_i} \leq z_f^M := \sum_{j \in J} 1_{M_j}$. The next proposition and its proof have never been given at least explicitly before. 

\begin{proposition}\label{P-a5} Let $M_n$, $n \geq 1$, be an increasing sequence of semifinite von Neumann subalgebras of a semifinite von Neumann algebra $M$ equipped with a faithful normal semifinite trace $\mathrm{tr}$ such that 
\begin{itemize} 
\item[(a)] $\mathrm{tr}\!\upharpoonright_{M_n}$ is semifinite for every $n$, 
\item[(b)] The $M_n$ generate $M$ as von Neumann algebra, 
\item[(c)] every $M_n$ is in the class $\mathcal{R}_4$, that is, $M_n = M_{n,h} \oplus \sum_{i \in I_n}^\oplus M_{n,i}$ as in Definition \ref{D-a1}, 
\item[(d)] every inclusion $M_n \subseteq M_{n+1}$ is a substandard embedding. 
\end{itemize}  
Then $M$ must fall in the class $\mathcal{R}_4$.  
\end{proposition}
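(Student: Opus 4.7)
The plan is to read off the central decomposition of $M$ directly from the substandard-embedding data, so that each summand can be identified either as hyperfinite (by the ascending-union argument) or as an amplification of an interpolated free group factor (via Dykema's inductive-limit theorem applied to a carefully chosen finite-projection corner). Write $\pi_{n} : M_n \hookrightarrow M_{n+1}$ for the inclusion and $(\pi_n)_* : I_n \to I_{n+1}$ for the induced map on free-factor indices, which is well-defined by orthogonality of the $1_{M_{n+1,j}}$'s. Set $J := \varinjlim_n I_n$, and for each $j \in J$ represented by a tail $(i_n)_{n \ge n_0}$ with $(\pi_n)_*(i_n) = i_{n+1}$ define
$$
p_j \;:=\; \bigvee_{n \ge n_0} 1_{M_{n,i_n}} \in M,
$$
which is an increasing supremum by substandardness, and put $p_h := 1_M - \sum_{j \in J} p_j$. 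The $p_j$'s together with $p_h$ form a family of pairwise orthogonal central projections summing to $1_M$: orthogonality follows from $1_{M_{n,i}} \perp 1_{M_{n,i'}}$ for $i \ne i'$, and centrality from the fact that any $x \in M_m$ commutes with every $1_{M_{n,i_n}}$ for $n \ge m$ (these are central in $M_n$), together with the fact that the $M_n$ generate $M$. A short dichotomy argument --- any central subprojection $z \le p_j$ satisfies $z\,1_{M_{n,i_n}} \in \mathbb{C}\,1_{M_{n,i_n}}$ since $M_{n,i_n}$ is a factor, and the two values $0$ or $1_{M_{n,i_n}}$ propagate coherently up the chain --- forces $z \in \{0,p_j\}$, so each $p_j M p_j$ is a factor.

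For the hyperfinite part I observe that $p_h \cdot 1_{M_{n,i}} \le p_h \cdot p_{j_i} = 0$ for every $i \in I_n$, whence $p_h M_n p_h = p_h\, 1_{M_{n,h}}\, M_{n,h}\, 1_{M_{n,h}}\, p_h$ is a corner of the hyperfinite algebra $M_{n,h}$ and therefore hyperfinite. Since these corners increase in $n$ and together generate $p_h M p_h$, the latter is an ascending union of hyperfinite subalgebras and is itself hyperfinite. Proposition~\ref{P-a1} is available here, providing finite-dimensional approximants compatible with any atomic abelian subalgebra such as $\mathcal{Z}(M_{n,h})$, should it be needed for the generating system.

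The bulk of the work is to identify each $p_j M p_j$ as an amplification of an interpolated free group factor. For a representative $(i_n)_{n \ge n_0}$ of $j$, I would extract from the substandardness hypothesis and the hereditary properties of standard embeddings in \cite[\S4]{Dykema:DukeMathJ93} a coherent ladder of nonzero finite projections $q_{n_0} \le q_{n_0+1} \le \cdots$ with $q_n \in M_{n,i_n}$ such that every inclusion $q_n M_{n,i_n} q_n \subseteq q_{n+1} M_{n+1,i_{n+1}} q_{n+1}$ is a standard embedding of interpolated free group factors. Setting $q := q_{n_0}$ and noting $q \le 1_{M_{n,i_n}}$ for all $n \ge n_0$, one has $q \perp 1_{M_{n,h}}$ and hence $qM_n q = q M_{n,i_n} q$; the WOT-closure of the ascending union of these corners is then $qMq$. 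By the inductive-limit theorem for standard embeddings (\cite{Dykema:DukeMathJ93}, \cite{Radulescu:InventMath94}) the limit $qMq$ is an interpolated free group factor $L(\mathbb{F}_{r_j})$, and since $p_j M p_j$ is a factor with $q$ a finite projection, it is the appropriate amplification of $L(\mathbb{F}_{r_j})$. Assembling $M = p_h M p_h \oplus \bigoplus_{j \in J} p_j M p_j$ then places $M$ in $\mathcal{R}_4$.

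The main obstacle is building the coherent ladder $\{q_n\}$ in the type II$_\infty$ setting. Substandardness provides, for each single inclusion $M_n \subseteq M_{n+1}$ and each $i \in I_n$, only a \emph{single} finite corner on which the restriction is standard, with no a priori compatibility across different $n$'s, so one must shrink and combine these corners to obtain a tower that is increasing and remains standard at every link. Moreover $p_j \cdot 1_{M_{n,h}}$ can be nonzero at every finite stage --- the hyperfinite summand $M_{n,h}$ may be partially absorbed into the free-group-factor summand $M_{m,i_m}$ at some later $m$, with $p_j 1_{M_{n,h}} = p_j - 1_{M_{n,i_n}} \searrow 0$ only in the limit --- so the interplay between hyperfinite and free parts cannot be avoided even when analysing $p_j M p_j$ alone, and this is precisely the semifinite feature absent from the classical type II$_1$ arguments. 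Navigating this absorption, together with the atomic-abelian-center compatibility supplied by Proposition~\ref{P-a1}, is what \S4 is set up to handle; once the ladder is in place the rest of the argument is a direct application of Dykema's inductive-limit theorem and standard amplification theory.
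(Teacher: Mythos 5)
Your overall architecture coincides with the paper's: split off a hyperfinite central summand, and for each tail of indices identify the corresponding central summand as an amplification of an interpolated free group factor by cutting to a finite corner, applying Dykema's inductive-limit theorem, and amplifying back. But your proposal is incomplete exactly at its central point: you never construct the tower of corners along which standardness holds, and you yourself flag ``building the coherent ladder $\{q_n\}$'' as an unresolved obstacle, saying only that you \emph{would extract} it. This is a genuine gap, and it is worth noting that the paper's proof does not build any increasing ladder at all. For the path $i=i(n)\to i(n+1)\to\cdots$, substandardness supplies at each link some finite projection $p_k\in M_{k,i(k)}$ on which $p_kM_kp_k\subseteq p_kM_{k+1,i(k+1)}p_k$ is standard; by hypothesis (a) all these projections have finite trace (the restriction of $\mathrm{tr}$ to each factor summand is semifinite), and the hereditary properties of standard embeddings in \cite[Proposition 4.2]{Dykema:DukeMathJ93} (cut-downs by projections and compositions remain standard, within finite-trace corners) transport the standardness of every link down to the \emph{single} fixed projection $p_n$ at the bottom of the path. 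One then has a chain of standard embeddings $p_nM_np_n\subseteq p_nM_{n+1}p_n\subseteq\cdots$ of interpolated free group factors whose union generates $p_nMp_n$, so \cite[Proposition 4.3 (ii)]{Dykema:DukeMathJ93} identifies $p_nMp_n$; since $c_M(p_n)=c_M(1_{M_{n,i}})$ and $1_{M_{k,i(k)}}\nearrow c_M(1_{M_{n,i}})$ (your $p_j$), the summand $Mc_M(1_{M_{n,i}})$ is the desired amplification. Thus the step you present as needing new \S4 machinery is a direct application of results already in \cite{Dykema:DukeMathJ93}; as written, your argument does not carry it out, and without it nothing is established about the summands $p_jMp_j$. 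Your second worry, the partial absorption of hyperfinite summands into free summands, is harmless on this route: it only affects how fast $1_{M_{k,i(k)}}$ increases to the central support, not the identification of the fixed corner.

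A smaller but real flaw is your factoriality ``dichotomy'': for a central projection $z\le p_j$ of $M$ one only knows $z1_{M_{n,i_n}}\in (M_{n,i_n})'\cap 1_{M_{n,i_n}}M1_{M_{n,i_n}}$, and factoriality of $M_{n,i_n}$ says nothing about this relative commutant, so the claim $z\,1_{M_{n,i_n}}\in\mathbb{C}\,1_{M_{n,i_n}}$ is unjustified. It is also unnecessary: once the corner $p_nMp_n$ (your $qMq$) is known to be a factor with $c_M(p_n)=p_j$, factoriality of $Mp_j$ follows. The hyperfinite half of your argument is essentially correct and agrees with the paper's treatment of $z_h$, modulo the slip that $M_np_h=M_{n,h}p_h$ is not a corner of $M_{n,h}$ but its cut-down by a projection commuting with it --- still hyperfinite, as needed, and the increasing union is then hyperfinite.
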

\begin{proof} Set $N_n := M_n + \mathbb{C}(1 - 1_{M_n})$ for every $n$. As remarked just before this proposition we have, by the assumptions (c),(d), $z_f^{(n)} := \sum_{i \in I_n} 1_{M_{n,i}} \leq z_f^{(n+1)} := \sum_{i \in I_{n+1}} 1_{M_{n+1,i}}$ for every $n$. Hence the central support projection $z_h^{(n)}$ of the hyperfinite part $M_{n,h} \oplus \mathbb{C}(1 - 1_{M_n})$ of $N_n$ must satisfy that $z_h^{(n)} = 1 - z_f^{(n)} \geq 1 - z_f^{(n+1)} = z_h^{(n+1)}$. Consequently, $z_h := \lim_{n\to\infty}z_h^{(n)}$ exists and falls in $\mathcal{Z}(M)$, since the $N_n$ are not decreasing and generate $M$ by the assumption (b). Every $N_n z_h = N_n z_h^{(n)} z_h$ is hyperfinite; hence $Mz_h$  must be hyperfinite. Note that $\bigvee_n \bigvee_{i \in I_n} c_M(1_{M_{n,i}}) \geq \bigvee_n \bigvee_{i \in I_n} 1_{M_{n,i}} = \bigvee_n z_f^{(n)} = 1-z_h$, a central projection of $M$. For every $n$ and $i \in I_n$ one has $1_{M_{n,i}} \leq z_f^{(n)} \nearrow 1 - z_h$, implying $c_M(1_{M_{n,i}}) \leq 1 - z_h$. Consequently, $\bigvee_n \bigvee_{i \in I_n} c_M(1_{M_{n,i}}) = 1 - z_h$. Let us choose arbitrary $n$ and $i \in I_n$. By the assumption (d) there exist a unique path $i=i(n)\to i(n+1)\to\cdots$ with $i(k) \in I_k$ for every $k \geq n$ and a sequence of finite projections $p_k \in M_{k,i(k)} = M1_{M_{k,i(k)}}$, $k \geq n$, such that the $1_{M_{k,i(k)}}$ are not decreasing along $k \geq n$, every $p_k$ is finite in $M_{k+1,i(k+1)} = M1_{M_{k+1,i(k+1)}}$, and moreover every inclusion $p_k M_k p_k \hookrightarrow p_k M_{k+1,i(k+1)} p_k$ is a standard embeddings. By the assumption (a) all the $\mathrm{tr}(p_k)$ are finite, and thus the inductive use of \cite[Proposition 4.2]{Dykema:DukeMathJ93} shows that all the inclusions $p_n M_{n,i} p_n \subseteq p_n M_{n+1,i(n+1)}p_n \subseteq p_n M_{n+2,i(n+1)}p_n \subseteq \cdots$ are standard embeddings. Consequently, \cite[Proposition 4.3 (ii)]{Dykema:DukeMathJ93} shows that $p_n Mp_n$ (which is generated by the $p_n M_{k,i(k)} p_n = p_n M_k p_n$, $k \geq n$) must be an interpolated free group factor so that $Mc_M(1_{M_{n,i}})$ is an amplification of that interpolated free group factor. Remark that all the $1_{M_{k,i(k)}}$, $k \geq n$, are in $Mc_M(1_{M_{n,i}})$, and thus $c_M(1_{M_{k,i(k)}})$, $k \geq n$, must coincide, since $Mc_M(z_{M_{n,i}})$ is a factor. Remark also that $1_{M_{k,i(k)}}$ increasingly converges to a central projection, say $c(n,i)$, of $M$, since $1_{M_{k,i(k)}} \in (N_k)'\cap M$ and the $N_k$, $k \geq n$, are not decreasing and generate $M$. This immediately implies that $1_{M_{k,i(k)}} \nearrow c(n,i) = c_M(1_{M_{n,i}})$ as $k\to\infty$ along $k \geq n$. Assume that $c_M(1_{M_{n,i}}) c_M(1_{M_{n',i'}}) \neq 0$. Since $c_M(1_{M_{n,i}}) c_M(1_{M_{n',i'}}) = \lim_{k,l\to\infty} 1_{M_{k,i(k)}} 1_{M_{l,i'(l)}}$ as remarked just before, one has $1_{M_{k,i(k)}} 1_{M_{l,i'(l)}} \neq 0$ for some $k,l$. Then $1_{M_{k\vee l,i(k\vee l)}} 1_{M_{k\vee l,i'(k\vee l)}} \geq 1_{M_{k,i(k)}} 1_{M_{l,i'(l)}} \neq 0$, which means that the unique paths $i=i(n)\to i(n+1)\to\cdots$ and $i'=i'(n')\to i'(n'+1)\to\dots$ intersect at $k\vee l$ so that $i(m) = i'(m)$ for every $m \geq k\vee l$. Therefore, $c_M(1_{M_{n,i}}) = \lim_{m\to\infty} 1_{M_{m,i(m)}} = \lim_{m\to\infty} 1_{M_{m,i'(m)}} = c_M(1_{M_{n',i'}})$ along $m \geq k\vee l$. Hence we can choose an orthogonal family $c_M(1_{M_{n_j,i_j}})$, $j\geq 1$, so that $\sum_j c_M(1_{M_{n_j,i_j}}) = 1 - z_h$.  
\end{proof} 

The next lemma is elementary so that we leave its proof to the reader. 

\begin{lemma}\label{L-a6} Let $M$ be a von Neumann algebra and $e \in M^p$ be such that $c_M(e) = 1$. If $eMe$ falls into the class $\mathcal{R}_4$, then so does $M$ itself. Moreover, each amplification of interpolated free group factor in $eMe$ is a corner of one in $M$.  
\end{lemma}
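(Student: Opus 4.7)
The plan is to lift the given $\mathcal{R}_4$ decomposition of $eMe$ to one of $M$ via the natural identification $\mathcal{Z}(M) \overset{\sim}{\longrightarrow} \mathcal{Z}(eMe)$, $z \mapsto ze$, which is a normal $*$-isomorphism precisely because $c_M(e) = 1$ (injectivity is immediate from $c_M(e) = 1$, and surjectivity follows since every central projection of $eMe$ is of the form $c_M(f)\cdot e$ for a suitable projection $f \leq e$). Write the given decomposition as
\[
eMe = N_h \oplus \bigoplus_{k\geq 1} N_k,
\]
with $N_h$ a countable direct sum of semifinite hyperfinite von Neumann algebras and each $N_k$ an amplification of an interpolated free group factor $L(\mathbb{F}_{r_k})$. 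Let $\tilde{z}_h, \tilde{z}_k \in \mathcal{Z}(M)^p$ be the preimages under $z \mapsto ze$ of the corresponding central projections of $eMe$. Then $\tilde{z}_h + \sum_k \tilde{z}_k = 1$ and $M = M\tilde{z}_h \oplus \bigoplus_{k\geq 1} M\tilde{z}_k$ centrally, so it suffices to check that each central summand of $M$ belongs to $\mathcal{R}_4$.

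For every such central projection $\tilde{z}$, the projection $e\tilde{z}$ retains full central support in $M\tilde{z}$ (since $c_{M\tilde{z}}(e\tilde{z}) = c_M(e)\tilde{z} = \tilde{z}$), and the corner $e\tilde{z}(M\tilde{z})e\tilde{z}$ equals the corresponding summand of $eMe$. For $\tilde{z} = \tilde{z}_h$, both semifiniteness and hyperfiniteness (the latter in the form of injectivity, by Connes) pass from a corner with full central support to the ambient algebra via Morita equivalence, so $M\tilde{z}_h$ is again a semifinite hyperfinite algebra. For each $\tilde{z}_k$, the corner $e\tilde{z}_k (M\tilde{z}_k)e\tilde{z}_k = N_k$ is a factor, and since $\mathcal{Z}(M\tilde{z}_k)e\tilde{z}_k \subseteq \mathcal{Z}(N_k) = \mathbb{C}e\tilde{z}_k$, $M\tilde{z}_k$ itself must be a factor. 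Because amplifications compose, $M\tilde{z}_k$ is then itself an amplification of $L(\mathbb{F}_{r_k})$ with $N_k$ appearing as its $e\tilde{z}_k$-corner, yielding the moreover clause along the way.

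The only point that requires an external named result is the transfer of hyperfiniteness (equivalently injectivity) from $N_h$ to $M\tilde{z}_h$, which follows from Connes's characterization of injectivity together with its Morita invariance; everything else is routine bookkeeping built on the $\mathcal{Z}(M) \cong \mathcal{Z}(eMe)$ identification, which is why the author was comfortable leaving the argument to the reader.
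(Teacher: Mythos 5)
Your argument is correct, and it is essentially the routine argument the paper has in mind when it leaves this lemma to the reader: reduce via the identification $\mathcal{Z}(eMe)=\mathcal{Z}(M)e$ (an isomorphism because $c_M(e)=1$), then note that semifiniteness and injectivity/hyperfiniteness pass from a corner with full central support to the ambient algebra, and that a factor with separable predual having a corner isomorphic to an amplification of $L(\mathbb{F}_r)$ is itself such an amplification (treating the cases of a finite and a properly infinite corner separately), which also yields the ``moreover'' clause. The only implicit inputs --- separability of preduals (the standing assumption of \S4, needed both for Connes's theorem and for the $\sigma$-finiteness used in the type II$_\infty$ case) and Connes's injectivity theorem --- are exactly what the paper allows, so there is no gap.
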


\subsection{A proof of Proposition \ref{P2}} 

Let us begin with the next user-friendly lemma that reorganize several arguments in \cite{Dykema:AmerJMath95} as a single statement.     

\begin{lemma}\label{L-a7} Let $B \supseteq A \supseteq D \subseteq C$ be {\rm(}unital{\rm)} finite von Neumann algebras equipped with faithful normal tracial states $\tau_B,\tau_C$ such that $\tau_B\!\upharpoonright_D = 
\tau_C\!\upharpoonright_D$. Let $z \in \mathcal{Z}(A)\cap\mathcal{Z}(B)$ be a projection such that $Bz$ is finite dimensional and $Bz^\perp = Az^\perp$. Consider two amalgamated free products $N := A \star_D C \subseteq M := B\star_D C$ with respect to the conditional expectations determined by $\tau_B, \tau_C$. If $N$ falls in the class $\mathcal{R}_4$, then so does $M$ and the embedding $N \hookrightarrow M$ is substandard. 
\end{lemma}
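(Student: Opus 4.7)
My plan is to isolate the finite-dimensional extension $Bz\supseteq Az$ and reduce to situations directly handled by Dykema's \cite{Dykema:DukeMathJ93,Dykema:AmerJMath95}, combined with the inductive-limit machinery of Propositions \ref{P-a1} and \ref{P-a5}. First, using the associativity of amalgamated free products along the chain $D\subseteq A\subseteq B$, rewrite
\[
M\;=\;B\star_D C\;=\;B\star_A(A\star_D C)\;=\;B\star_A N,
\]
amalgamated over $A$ via the trace-preserving conditional expectations. Since $Bz^\perp=Az^\perp$ already sits inside $A\subseteq N$, the only discrepancy between $B$ and $A$ lives in the finite-dimensional corner $Bz\supseteq Az$.

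Second, because $z$ is a central projection of $A$ and of $B$, the standard corner formula for amalgamated free products (see e.g.~\cite{Ueda:PacificJMath99}) yields
\[
zMz\;=\;Bz\star_{Az}(zNz),\qquad z^\perp Mz^\perp\;=\;(Bz^\perp)\star_{Az^\perp}(z^\perp Nz^\perp)\;=\;z^\perp Nz^\perp,
\]
the latter collapsing since the two factors coincide with the amalgam $Az^\perp=Bz^\perp$. Thus the $z^\perp$-corner of $M$ literally equals that of $N$, and the entire new content of $M$ over $N$ is concentrated in the $z$-corner as a finite-tracial amalgamated free product of the finite-dimensional algebra $Bz$ with the corner $zNz\in\mathcal{R}_4$ (closure under reduction is immediate) over the finite-dimensional subalgebra $Az$.

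Third, analyze $zMz=Bz\star_{Az}(zNz)$ summand-by-summand. For each amplification-of-interpolated-free-group-factor summand of $zNz$, the amalgamated free product with the finite-dimensional $Bz$ over the finite-dimensional $Az$ is handled directly by Dykema's \cite{Dykema:DukeMathJ93,Dykema:AmerJMath95}, yielding another amplification of an interpolated free group factor together with a standard embedding on a suitable finite corner. For each semifinite hyperfinite summand, apply Proposition \ref{P-a1} to approximate by an increasing sequence of finite-dimensional subalgebras compatible with $Az$; each finite-dimensional amalgamated free product with $Bz$ over $Az$ is again handled by Dykema and lies in $\mathcal{R}_4$, and Proposition \ref{P-a5} promotes the inductive limit to $zMz\in\mathcal{R}_4$ with $zNz\hookrightarrow zMz$ substandard.

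Finally I glue the corners together. By Lemma \ref{L-a6} applied inside $Mc_M(z)$, the central summand $Mc_M(z)$ of $M$ lies in $\mathcal{R}_4$; the complementary summand $M(1-c_M(z))$ sits inside $z^\perp Mz^\perp=z^\perp Nz^\perp$ (since $1-c_M(z)\leq z^\perp$) and is therefore a central summand of an algebra in $\mathcal{R}_4$, hence itself in $\mathcal{R}_4$. Thus $M\in\mathcal{R}_4$. The substandard character of $N\hookrightarrow M$ follows by combining the substandard embedding on the $z$-corner with the identity embedding on the $z^\perp$-corner. The main obstacle will be the third step: tracking the standard-embedding data summand-by-summand through Dykema's free-dimension bookkeeping (Section 3 above) to guarantee substandardness at each level of the inductive construction, especially in reconciling the finite-dimensional approximations used for the hyperfinite summands with the direct Dykema calculations applied to the free-group-factor summands.
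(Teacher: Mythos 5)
Your reduction to the corner $zMz$ has a genuine gap at its core: the step you rely on most, namely that $zMz\cong Bz\star_{Az}(zNz)$ can be analyzed ``summand-by-summand'' and that each piece is ``handled directly by Dykema,'' is not available. The central projections of $zNz$ do not lie in the amalgam $Az$, so you cannot cut the amalgamated free product along the central decomposition of $zNz$; adjoining one summand at a time requires exactly the localization and inductive-limit bookkeeping (Dykema's trick, central supports, Proposition \ref{P-a5}) that the paper develops elsewhere, and Proposition \ref{P-a5} needs every consecutive inclusion to be a \emph{substandard} embedding. That substandardness is precisely the content of Lemma \ref{L-a7} being proved: neither \cite{Dykema:DukeMathJ93} (free products, no amalgamation) nor \cite{Dykema:AmerJMath95} (multi-matrix algebras on both sides) gives off the shelf that enlarging one side by a finite-dimensional extension over a finite-dimensional amalgam induces a standard embedding on a finite corner when the other side is an amplified interpolated free group factor or a hyperfinite algebra, in the generality of the class $\mathcal{R}_4$ and Redelmeier's substandard notion. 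So your plan is circular where it matters. The paper avoids this by decomposing $Az\subseteq Bz$ into elementary Bratteli moves --- tensoring one matrix summand by $\mathbb{C}^m$, or fusing two matrix summands into one --- and for each move identifying a corner of $N\subseteq M$ with $eNe\subseteq eNe\star\mathbb{C}^m$, respectively $qNq\subseteq qNq\star_{\mathbb{C}^2}\bigl(M_2(\mathbb{C})\supseteq\Delta_2\bigr)$ via a free-independence computation with the extra matrix unit $v$; only then do the specific results \cite[Lemma 4.2]{Dykema:BLMS11} and \cite[Theorem 3.2, Lemma 3.7, Lemma 4.2]{Dykema:AmerJMath95} apply, delivering both membership in $\mathcal{R}_4$ and the standard-embedding conclusion, after which Lemma \ref{L-a6} globalizes from the corner.

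Two further points. Your opening identity $B\star_D C=B\star_A(A\star_D C)$ is asserted as ``associativity'' without proof or reference; it is true, but it is not among the tools established in the paper and requires an argument (a Fock-space or moment computation showing that $B$ and $W^{*}(A\cup C)$ are free with amalgamation over $A$ with respect to the trace-preserving expectation $E_A$), whereas the compression by the central projection $z\in A$ afterwards is standard. Finally, in the gluing step the verification of Definition \ref{D-a1} is not purely corner-wise: a free-group-factor summand $N_i$ of $N$ may have $z1_{N_i}\neq0\neq z^{\perp}1_{N_i}$, so you must still exhibit the target summand of $M$ containing $1_{N_i}$ and a finite projection of $N_i$ on which the embedding is standard; this needs the central structure of $M$, which in your scheme is exactly what has not yet been established.
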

\begin{proof}
Examining the Bratteli diagram of $Az \subseteq Bz$ we can divide $A \subseteq B$ into $A = A_0 \subseteq A_1 \subseteq \cdots \subseteq A_l = B$ (with finite $l$) such that each $A_{k-1} \subseteq A_k$ is conjugate to either 
\begin{itemize} 
\item[(a)] $M_n(\mathbb{C})\,\oplus\,Q \hookrightarrow \big[M_n(\mathbb{C})\,\otimes\,\mathbb{C}^m\big]\,\oplus\,Q$ by $(x_1,y) \mapsto (x_1\otimes1,y)$ or, 
\item[(b)] $\big[M_{n_1}(\mathbb{C})\oplus M_{n_2}(\mathbb{C})\big]\,\oplus\,Q \hookrightarrow M_{n_1+n_2}(\mathbb{C})\,\oplus\,Q$ by 
$(x_1,x_2,y) \mapsto \big(\mathrm{Diag}(x_1,x_2),y)$. 
\end{itemize}
By definition it immediately follows from \cite[Proposition 4.2, Proposition 4.3 (i)]{Dykema:DukeMathJ93} that the composition of given substandard embeddings is substandard. Hence we may and do assume that $A \subseteq B$ is either the above (a) or (b) with letting $p := 1_Q^\perp$ in what follows.    
 
Case (a): Choose a minimal projection $e$ in $Ap$. By \cite[Lemma 4.2]{Dykema:BLMS11} (a reformulation of \cite[Lemma 4.4]{Dykema:AmerJMath95}) one has $c := c_M(e) = c_N(e) \geq p$ and $eNe \subseteq eMe$ is conjugate to $eNe \subseteq eNe \star \mathbb{C}^m$. Hence \cite[Theorem 3.2, Lemma 3.7]{Dykema:AmerJMath95} shows that $eMe$ falls in the class $\mathcal{R}_4$ and the embedding $eNe \hookrightarrow eMe$ is substandard. Then $M = Nc^\perp\oplus Mc$ and Lemma \ref{L-a6} enable us to show that $M$ falls in the class $\mathcal{R}_4$ and the embedding $N \hookrightarrow M$ is substandard. 

Case (b). Let $v$ be a matrix unit in $M_{n_1+n_2}(\mathbb{C}) = Bp$  such that $e:=v^*v \in M_{n_1}(\mathbb{C})\oplus\{0\}$ and $f:=vv^* \in \{0\}\oplus M_{n_2}(\mathbb{C})$; hence $e,f \in A$, $e \perp f$, $q:=e+f \leq p$ and both $e,f$ are minimal in $B$. Clearly $A$ and $v$ generate $B$ so that $M = N\vee\{v\}''$. We can prove, by free etymology technique (see the first paragraph of \cite[p.158]{Dykema:AmerJMath95}), that $qNq$ and the linear span of $\{e,f,v,v^*\}$ ($\cong M_2(\mathbb{C})$) are freely independent in the $\mathbb{C}e\oplus\mathbb{C}f$-valued probability space $(qMq, E_A^B\circ E_B\!\upharpoonright_{qMq})$ and generate $qMq$, where $E_A^B : B \to A$ and $E_B : M \to B$ are the unique conditional expectations determined by the tracial states that we are employing. Therefore, $qNq \hookrightarrow qMq$ is conjugate to $qNq \hookrightarrow (qNq \supseteq \mathbb{C}e\oplus\mathbb{C}f) \star_{\mathbb{C}^2} (M_2(\mathbb{C}) \supseteq \Delta_2)$, where $\Delta_2$ is the diagonals in $M_2(\mathbb{C})$. If either $e$ or $f$ is minimal and central in $qNq$, then $fMf = fNf$ or $eMe = eNe$ holds respectively, and hence by Lemma \ref{L-a6} $qMq$ falls in the class $\mathcal{R}_4$ and the embedding $qNq \hookrightarrow qMq$ is substandard. If neither $e$ nor $f$ is minimal and central, then \cite[Lemma 4.2]{Dykema:AmerJMath95} guarantees that the same assertions hold ture. Since $M = N\vee Bp$, $c_M(q) = c_M(p)$ and $c_N(q) = c_N(p) \geq p$, one has $c_M(q) = c_N(q)$. Hence the desired assertion follows as in the case (a).  
\end{proof}

\begin{lemma}\label{L-a8} Any semifinite tracial amalgamated free product of two hyperfinite von Neumann algebras over an atomic type I von Neumann subalgebra falls in the class $\mathcal{R}_4$.
\end{lemma}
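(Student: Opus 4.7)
The plan is to realise $M=M_1\star_D M_2$ as an inductive limit of finite amalgamated free products of finite-dimensional algebras, and then transport the $\mathcal{R}_4$ property via Proposition~\ref{P-a5} and Lemma~\ref{L-a7}.

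First I would reduce $D$ from atomic type I to atomic abelian. Choose an atomic MASA $D_0\subseteq D$; a matricial corner argument, using that amplifications of interpolated free group factors are stable under corners with full central support (Lemma~\ref{L-a6}) and that a corner of $M_1\star_D M_2$ by a minimal projection of $D_0$ is itself an amalgamated free product (with base cut by the same projection), reduces amalgamation over $D$ to amalgamation over $D_0$. Next, by splitting each atom of $D_0$ into countably many subprojections of finite $\mathrm{tr}$-mass (a trivial abelian analogue of Lemmas~\ref{L-a2}--\ref{L-a3}), I may assume that every atom $p_i$ of $D_0=\sum_{i\geq 1}\mathbb{C}p_i$ has finite trace in both $M_1$ and $M_2$.

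Put $q_n:=p_1+\cdots+p_n\nearrow 1$. The standard compression identity for amalgamated free products over a projection in the base gives
\[
q_n M q_n \;=\; (q_n M_1 q_n)\star_{Dq_n}(q_n M_2 q_n),
\]
so it suffices to analyse these finite-trace corners. Each $q_n M_j q_n$ is a finite hyperfinite von Neumann algebra containing the finite-dimensional atomic abelian $Dq_n$, so by Proposition~\ref{P-a1} I obtain an increasing sequence of finite-dimensional $*$-subalgebras $A_{n,k}^{(j)}\subseteq q_n M_j q_n$ ($j=1,2$) with $1_{A_{n,k}^{(j)}}=q_n$, each containing $Dq_n$, and with weakly dense union. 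Set $N_{n,k}:=A_{n,k}^{(1)}\star_{Dq_n}A_{n,k}^{(2)}$. Every such finite-dimensional amalgamated free product over a finite-dimensional atomic abelian algebra falls in $\mathcal{R}_4$ by Dykema's classical decomposition of finite-dimensional free products into finite-dimensional summands plus interpolated free group factors~\cite{Dykema:DukeMathJ93}. Decomposing each Bratteli edge $A_{n,k}^{(j)}\hookrightarrow A_{n,k+1}^{(j)}$ into the two elementary types treated in Lemma~\ref{L-a7} and applying that lemma alternately on the two sides with fixed base $Dq_n$, every inclusion $N_{n,k}\hookrightarrow N_{n,k+1}$ is substandard; Proposition~\ref{P-a5} then yields $q_n M q_n\in\mathcal{R}_4$.

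It remains to ascend from the chain $\{q_n M q_n\}$ (whose union generates $M$) to $M$ itself. The main obstacle is that Lemma~\ref{L-a7} enlarges one side while keeping the amalgamation base fixed, whereas the inclusion $q_n M q_n\hookrightarrow q_{n+1}M q_{n+1}$ grows the base from $Dq_n$ to $Dq_{n+1}$. I would overcome this by factoring the inclusion through an intermediate amalgamated free product over $Dq_{n+1}$ in which the new atom $p_{n+1}$ is adjoined as an orthogonal central direct summand on each side successively---each such adjunction fits the hypothesis of Lemma~\ref{L-a7} with fixed base $Dq_{n+1}$, since the finite-dimensional approximants of $(q_{n+1}-q_n)M_j(q_{n+1}-q_n)$ attach to the previous data through the central direct sum coming from $q_n\perp p_{n+1}$. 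The overall inclusion thus decomposes as a composition of substandard embeddings, hence is itself substandard. Once this is checked, Proposition~\ref{P-a5} applied to the chain $\{q_n M q_n\}\subseteq M$ delivers $M\in\mathcal{R}_4$ and completes the proof.
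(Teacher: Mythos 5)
Your overall strategy (reduce to an atomic abelian base, approximate by finite-dimensional subalgebras via Proposition \ref{P-a1}, and propagate membership in $\mathcal{R}_4$ through Lemma \ref{L-a7} and Proposition \ref{P-a5}) is the right one, and your analysis inside a fixed corner $q_nMq_n$ is essentially sound. The genuine gap is the ascent from the chain $\{q_nMq_n\}$ to $M$. The inclusion $q_nMq_n\hookrightarrow q_{n+1}Mq_{n+1}$ is an inclusion of amalgamated free products in which \emph{both} free factors are enlarged from $q_nM_jq_n\oplus\mathbb{C}p_{n+1}$ to $q_{n+1}M_jq_{n+1}$, and this enlargement is infinite-dimensional: it must supply the off-diagonal corners $q_nM_jp_{n+1}$ as well as $p_{n+1}M_jp_{n+1}$. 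Lemma \ref{L-a7} only covers a one-sided enlargement $A\subseteq B$ with $Bz$ finite dimensional and $Bz^\perp=Az^\perp$; adjoining $p_{n+1}$ ``as an orthogonal central direct summand on each side'' only produces the proper subalgebra $(q_nM_1q_n\oplus\mathbb{C}p_{n+1})\star_{Dq_{n+1}}(q_nM_2q_n\oplus\mathbb{C}p_{n+1})$ of $q_{n+1}Mq_{n+1}$, and none of your tools shows that the remaining infinite-dimensional part is absorbed by a substandard embedding; Proposition \ref{P-a5} cannot help, since substandardness of these inclusions is precisely its hypothesis. So your second application of Proposition \ref{P-a5} is unjustified, and proving substandardness of $q_nMq_n\hookrightarrow q_{n+1}Mq_{n+1}$ directly would amount to redoing the entire analysis.

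The repair is to use one diagonal chain instead of completing the limit in each corner first: apply Proposition \ref{P-a1} once to each $M_j$ to get finite-dimensional $A_{j,n}$ with $1_{A_{j,n}}=q_n$ and $Dq_n\subseteq A_{j,n}$, and set $P_n:=A_{1,n}\vee A_{2,n}\cong A_{1,n}\star_{Dq_n}A_{2,n}$, an increasing chain generating $M$. The inclusion $P_n\hookrightarrow P_{n+1}$ then factors through $B_{1,n+1}\star_{Dq_{n+1}}B_{2,n+1}$ and $A_{1,n+1}\star_{Dq_{n+1}}B_{2,n+1}$ with $B_{j,n+1}:=A_{j,n}\oplus\mathbb{C}p_{n+1}$, so each step is a one-sided \emph{finite-dimensional} enlargement over the fixed base $Dq_{n+1}$, i.e.\ exactly Lemma \ref{L-a7}, and a single application of Proposition \ref{P-a5} finishes. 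Two smaller points: the splitting of atoms of the base into finite-trace subprojections is both illegitimate (refining $D$ destroys the amalgamated free product structure, since the subprojections lie in only one of the $M_j$) and unnecessary, because semifiniteness of the trace on the atomic abelian base already forces every atom to have finite trace; likewise the reduction to an abelian base should cut by an abelian projection of $D$ with central support $1$ in $D$ (as in \cite[Lemma 4.6]{Ueda:JLMS13}), not by a single minimal projection. Finally, for $n\geq2$ the base case $A_{1,n}\star_{Dq_n}A_{2,n}\in\mathcal{R}_4$ is not covered by \cite{Dykema:DukeMathJ93} (there is no amalgamation there); either invoke Dykema's finite-dimensional amalgamated results \cite{Dykema:AmerJMath95} or build it up from $Dq_n$ by Lemma \ref{L-a7}.
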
  
\begin{proof} Let $M_1, M_2$ be two hyperfinite von Neumann algebras and $D$ be a common atomic type I von Neumann subalgebra. Assume that there exist two faithful normal conditional expectations $E_k : M_k \to D$, $k=1,2$, such that $\psi\circ E_k$, $k=1,2$, give faithful normal (semifinite) traces $\mathrm{tr}_k$ on $M_k$ for a common faithful normal (semifinite) trace $\psi$ on $D$. Let $(M,E) = (M_1,E_1)\star_D (M_2,E_2)$ be the amalgamated free product, and by \cite[Theorem 2.6]{Ueda:PacificJMath99} the composition $\psi\circ E$ gives a faithful normal (semifinite) trace $\mathrm{tr}$ on $M$. By Lemma \ref{L-a6} and by e.g.~\cite[Lemma 4.6]{Ueda:JLMS13} we may and do assume that $D$ is commutative, say $D = \sum^\oplus_{i\geq1} \mathbb{C}p_i$. (This observation originates in \cite[Lemma 2.2]{Radulescu:InventMath94}.) By Proposition \ref{P-a1} with $q_n := \sum_{i\leq n} p_i \in D$ we have two sequences of finite dimensional $*$-subalgebras $A_{k,1} \subseteq A_{k,2} \subseteq \cdots$ of $M_k$, $k=1,2$, such that $D q_n \subseteq A_{k,n}$ and $q_n = 1_{A_{k,n}}$, $k=1,2$, for every $n$ and all $A_{k,n}$, $n\geq1$, generate $M_k$, $k=1,2$. Set $P_n := A_{1,n}\vee A_{2,n}$, $n \geq 1$, inside $q_n Mq_n$. The $P_n$, $n\geq1$, form an increasing sequence and generate $M$. We will prove by induction that all the $P_n$ fall in the class $\mathcal{R}_4$ and all the embeddings $P_n \hookrightarrow P_{n+1}$ are substandard. If once these were established, then the desired assertion would immediately follow by Proposition \ref{P-a5}. Since $P_1 \cong A_{1,1}\star A_{2,1}$, it must fall in the class $\mathcal{R}_4$ by \cite[Theorem 3.6]{Dykema:DukeMathJ93}. Assume that we have proved that all the $P_k$, $k \leq n$ fall in the class $\mathcal{R}_4$ and that all the embeddings $P_k \hookrightarrow P_{k+1}$, $k \leq n-1$, are substandard. Set $B_{k,n+1} := A_{k,n} \oplus\,\mathbb{C}p_{n+1}$, a unital $*$-subalgebra of $A_{k,n+1}$ for $k=1,2$. Consider $Q := B_{1,n+1}\vee B_{2,n+1} \subseteq R := A_{1,n+1}\vee B_{2,n+1}$. Then $Q = P_n\oplus\mathbb{C}p_{n+1}$, and $Q \subseteq R \subseteq P_{n+1}$ is conjugate to $B_{1,n+1}\star_{Dq_{n+1}} B_{2,n+1} \subseteq A_{1,n+1}\star_{Dq_{n+1}} B_{2,n+1} \subseteq A_{1,n+1}\star_{Dq_{n+1}} A_{2,n+1}$. Hence using Lemma \ref{L-a7} twice we see that $Q,R,P_{n+1}$ fall in the class $\mathcal{R}_4$ and the embeddings $P_n \hookrightarrow Q \hookrightarrow R \hookrightarrow P_{n+1}$ are substandard. As remarked in the proof of Lemma \ref{L-a7} the composition $P_n \hookrightarrow P_{n+1}$ becomes a substandard embedding. 
\end{proof} 

The next semifinite variant of \cite[Proposition 2.2]{Dykema:PacificJMath94} may be known among specialists, but we could not find its reference; hence we include it for providing its clear statement.    

\begin{lemma}\label{L-a9} Let $F$ be an amplification of interpolated free group factor and $p_i$, $i \geq 1$, be a partition of unity of projections in $F$. Assume that a faithful normal {\rm(}semifinite{\rm)} trace $\mathrm{tr}_F$ on $F$ satisfies that $\mathrm{tr}_F(p_i) < +\infty$ for every $i \geq 1$. Then there exist a semifinite von Neumann algebra $M$ with a faithful normal {\rm(}semifinite{\rm)} trace $\mathrm{tr}_M$, a copy of hyperfinite type II$_1$ or II$_\infty$ factor $R$ in $M$, a partition of unity $q_i$, $i \geq 1$, of projections in $R$, a semicircular family $\{x_t\,|\,t \in T\}$ in $(q_1 M q_1,\mathrm{tr}_M(q_1)^{-1}\mathrm{tr}_M\!\upharpoonright_{q_1 M q_1})$ and projections $\{e_t\,|\,t \in T\}$ in $q_1 R q_1$ such that $\mathrm{tr}_M(q_i) = \mathrm{tr}_F(p_i)$ for every $i \geq 1$, that $q_1 R q_1$ and $\{x_t\,|\, t \in T\}$ are freely independent in $(q_1 M q_1,\mathrm{tr}_M(q_1)^{-1}\mathrm{tr}_M\!\upharpoonright_{q_1 M q_1})$, and that $F$ is isomorphic to $R \vee \{e_t x_t e_t\,|\, t \in T\}''$ with sending $p_i$ to $q_i$ for every $i \geq 1$,
\end{lemma}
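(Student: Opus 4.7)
The plan is to lift Dykema's finite version of the statement (Proposition 2.2 of \cite{Dykema:PacificJMath94}) to the semifinite setting via a compression-then-amplification argument. First I would compress: set $\lambda := \mathrm{tr}_F(p_1) \in (0,\infty)$ and pass to $F_0 := p_1 F p_1$ with the normalized trace $\tau_0 := \lambda^{-1}\mathrm{tr}_F\!\upharpoonright_{F_0}$, which is a (finite) interpolated free group factor because $F$ is an amplification of one. Applying Dykema's finite Proposition 2.2 to $(F_0,\tau_0)$ yields a finite tracial von Neumann algebra $(N_0,\tau_{N_0})$ containing $F_0$ unitally, a hyperfinite II$_1$ subfactor $R_0 \subseteq N_0$ with $1_{R_0} = 1_{N_0} = p_1$, a $*$-free semicircular family $\{x_t\}_{t \in T} \subseteq N_0$ free from $R_0$, and projections $\{e_t\}_{t \in T} \subseteq R_0$ such that $F_0 = R_0 \vee \{e_t x_t e_t : t \in T\}''$.

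Next I would amplify. Set $\tilde M := N_0 \bar\otimes B(\ell^2)$ with product trace $\tilde{\mathrm{tr}} := (\lambda \tau_{N_0}) \otimes \mathrm{Tr}$ (normalized so $\mathrm{Tr}(e_{11}) = 1$), $\tilde R := R_0 \bar\otimes B(\ell^2)$, and $\tilde q_1 := 1_{N_0}\otimes e_{11}$; then $\tilde q_1 \tilde M \tilde q_1$ identifies with $N_0$ and $\tilde q_1 \tilde R \tilde q_1$ with $R_0$, with $\tilde{\mathrm{tr}}(\tilde q_1) = \lambda = \mathrm{tr}_F(p_1)$. In the type II$_\infty$ case take $(M,\mathrm{tr}_M,R,q_1) := (\tilde M,\tilde{\mathrm{tr}},\tilde R,\tilde q_1)$; in the type II$_1$ case, choose $Q \in \tilde R$ with $Q \geq \tilde q_1$ and $\tilde{\mathrm{tr}}(Q) = \mathrm{tr}_F(1_F)$, and cut down to $M := Q\tilde M Q$, $R := Q\tilde R Q$, $q_1 := \tilde q_1$. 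Complete the partition by choosing mutually orthogonal projections $q_2,q_3,\dots \in R$ orthogonal to $q_1$, summing to $1_R - q_1$, with $\mathrm{tr}_M(q_i) = \mathrm{tr}_F(p_i)$; this is possible since $R$ is a semifinite factor and $\sum_i \mathrm{tr}_F(p_i) = \mathrm{tr}_F(1_F) = \mathrm{tr}_M(1_R)$. The semicircular family $\{x_t\}$ and the projections $\{e_t\}$ now sit inside the corners $q_1 M q_1$ and $q_1 R q_1$ respectively, with their original $*$-freeness preserved and with the normalized trace on $q_1 M q_1$ coinciding with $\tau_{N_0}$.

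Finally, the algebra $\tilde F := R \vee \{e_t x_t e_t : t \in T\}'' \subseteq M$ equals $F_0 \bar\otimes B(\ell^2)$ (resp.\ $Q(F_0\bar\otimes B(\ell^2))Q$), because the matrix units in $1\otimes B(\ell^2) \subseteq R$ allow one to recover $(e_t x_t e_t)\otimes 1$ from $(e_t x_t e_t)\otimes e_{11}$; hence $\tilde F$ is the amplification of $F_0$ by ratio $\mathrm{tr}_M(1_R)/\mathrm{tr}_M(q_1) = \mathrm{tr}_F(1_F)/\lambda$, which is precisely the ratio yielding $F$ from $F_0$, so $\tilde F \cong F$. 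Because two partitions of unity in a II factor with matching trace multi-sets are conjugate via inner automorphism, this isomorphism can then be adjusted to send $q_i \mapsto p_i$ for every $i \geq 1$.

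The main obstacle is the last transport step: arranging a single isomorphism $\tilde F \cong F$ that simultaneously sends the entire partition $\{q_i\}$ to $\{p_i\}$. It reduces to the standard classification of partitions of unity of projections in a II factor (both finite and semifinite) by their sequence of trace values, which has to be invoked with care. A secondary technical point is the scaling of the trace on $B(\ell^2)$ so that $\mathrm{tr}_M(q_1)$ matches $\mathrm{tr}_F(p_1)$ exactly; this is handled by the multiplicative factor $\lambda$ above.
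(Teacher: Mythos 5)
Your argument is essentially correct, but it takes a genuinely different route from the paper. You compress to the finite corner $F_0=p_1Fp_1$, invoke Dykema's finite realization (\cite[Proposition 2.2]{Dykema:PacificJMath94}, i.e.\ the very definition of interpolated free group factors) to get $F_0=R_0\vee\{e_tx_te_t\}''$ inside a finite tracial $(N_0,\tau_{N_0})$, and then amplify by tensoring with $B(\ell^2)$ (cutting by a projection $Q\in\tilde R$ in the finite case), finishing with a trace-matching isomorphism onto $F$ and a unitary conjugation aligning the two partitions. The paper instead builds the ambient pair in one stroke as the semifinite amalgamated free product $M=R\star_D\bigl[L(\mathbb{F}_\infty)\oplus Dq_1^\perp\bigr]$ over the atomic algebra $D=\sum_i^\oplus\mathbb{C}q_i$ generated by a prescribed partition of unity in the hyperfinite factor $R$; Dykema's compression trick then gives $q_1Mq_1\cong q_1Rq_1\star L(\mathbb{F}_\infty)$, so the freeness and the semicircular family come for free and the partition sits inside $R$ from the outset, after which the corner identification $q_1Nq_1\cong p_1Fp_1$, \cite[Proposition V.1.40]{Takesaki:Book}, and a unitary perturbation conclude exactly as in your last step. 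What your route buys is the avoidance of amalgamated free product machinery (only the finite-case definition plus a tensor amplification are used); what the paper's route buys is uniformity with the rest of \S4 and automatic bookkeeping of the trace and of the partition. Two points in your write-up need tightening, both fixable: (i) in the cut-down (type II$_1$) case the matrix units $1\otimes e_{ij}$ need not lie in $R=Q\tilde RQ$, so the identification $R\vee\{e_tx_te_t\}''=Q(F_0\,\bar{\otimes}\,B(\ell^2))Q$ should be argued via partial isometries of $R$ with initial projections under $q_1$ and final projections summing to $Q$ (using that $q_1$ has central support $Q$ in $R$); (ii) in the II$_\infty$ case an abstract isomorphism $\tilde F\cong F$ may rescale the trace, and a unitary conjugation cannot repair a scaling discrepancy, so you must produce a \emph{trace-preserving} isomorphism before matching the partitions --- this follows from the corner isomorphism $q_1\tilde Fq_1\cong p_1Fp_1$ together with $\mathrm{tr}_M(q_1)=\mathrm{tr}_F(p_1)$ and the full central supports, which is precisely the extension step the paper delegates to \cite[Proposition V.1.40]{Takesaki:Book}.
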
 
\begin{proof} By assumption one has $r > 1$ so that $p_1 F p_1 \cong L(\mathbb{F}_r)$. Let $R$ be the unique hyperfinite type II$_1$ or II$_\infty$ factor with a faithful normal (semifinite) trace $\mathrm{tr}$. One can find a partition of unity $q_i$, $i \geq 1$, of projections in $R$ so that $\mathrm{tr}(q_i) = \mathrm{tr}_F(p_i)$ for every $i \geq 1$. Set $D := \sum_{i\geq1}^\oplus\mathbb{C}q_i \subseteq R$. Consider the amalgamated free product von Neumann algebra
$M = R \star_D \big[\overset{q_1}{L(\mathbb{F}_\infty)}\oplus Dq_1^\perp\big]$
with respect to the conditional expectations determined by $\mathrm{tr}$ and the tracial state $\tau_{\mathbb{F}_\infty}$ on $L(\mathbb{F}_\infty)$. Then $M$ has a faithful normal (semifinite) trace $\mathrm{tr}_M$ with $\mathrm{tr}_M\!\upharpoonright_{R} = \mathrm{tr}$. (See the proof of Lemma \ref{L-a8}.) Choose projections $\{e_t\,|\,t \in T\}$ in $q_1 Rq_1$ in such a way that $\sum_{t \in T} (\mathrm{tr}(e_t)/\mathrm{tr}(q_1))^2 = r-1$, and the copy of $L(\mathbb{F}_\infty)$ in $M$ has a semicircular family $\{x_t\,|\,t \in T\}$ with respect to $\tau_{\mathbb{F}_\infty}$. Set $N := R\vee\{e_t x_t e_t\,|\,t \in T\}''$, which has a faithful normal (semifinite) trace $\mathrm{tr}_N := \mathrm{tr}_M\!\upharpoonright_N$. Then $q_1 N q_1 = q_1 R q_1 \vee \{e_t x_t e_t\,|\,t \in T\}'' \cong p_1 F p_1$ by Dykema's definition of interpolated free group factors, and $c_N(q_1) = 1_N$. By \cite[Proposition V.1.40]{Takesaki:Book} we conclude $N \cong F$ that preserves the traces $\mathrm{tr}_N, \mathrm{tr}_F$. Perturbing the $*$-isomorphism by a unitary in $F$ one gets a $*$-isomorphism $\pi : N \to F$ so that $\mathrm{tr}_F\circ\pi = \mathrm{tr}_N$ and $\pi(q_i) = p_i$ for every $i \geq 1$. 
\end{proof}  

The next lemma is related to \cite[\S4]{Redelmeier:arXiv:1207.1117} that has a problem at the present moment. The proof below does not yet work for fixing it completely.     

\begin{lemma}\label{L-a10} Let $M_1, M_2$ be either {\rm(a)} a hyperfinite von Neumann algebra and an amplification of interpolated free group factor, or {\rm(b)} both amplifications of interpolated free group factors. Let $D = \sum_{i\geq1}^\oplus\mathbb{C}p_i$ be a common unital von Neumann subalgebra of the $M_k$, $k=1,2$. Assume that both the $M_k$ have faithful normal {\rm(}semifinite{\rm)} traces $\mathrm{tr}_k$ which are semifinite on $D$ and satisfy $\mathrm{tr}_1\!\upharpoonright_D = \mathrm{tr}_2\!\upharpoonright_D$. Then the amalgamated free product von Neumann algebra $M := M_1 \star_D M_2$ with respect to the conditional expectations determined by the traces $\mathrm{tr}_k$ is an amplification of interpolated free group factor again and each embedding $M_k \hookrightarrow M$ is substandard if $M_k$ is an amplification of interpolated free group factor. 
\end{lemma}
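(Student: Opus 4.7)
The plan is to combine Lemma~\ref{L-a8} (amalgamated free products of hyperfinite algebras over atomic $D$ lie in $\mathcal{R}_4$) with Lemma~\ref{L-a9} (each amplification of an interpolated free group factor is a hyperfinite algebra with freely independent cut-down semicirculars attached at a finite-trace corner). First I would apply Lemma~\ref{L-a9} to every $M_k$ that is an amplification of an interpolated free group factor, obtaining a trace-preserving embedding $M_k\hookrightarrow\tilde M_k := R_k \star_D [L(\mathbb{F}_\infty)\oplus Dp_1^\perp]$, where $R_k$ is hyperfinite with $D\subseteq R_k$, the $L(\mathbb{F}_\infty)$-piece sits at the finite-trace corner $p_1\in D$, and $M_k \cong R_k \vee \{e_t^{(k)} x_t^{(k)} e_t^{(k)}\}''$ inside $\tilde M_k$, with $\{x_t^{(k)}\}\subset p_1\tilde M_k p_1$ a semicircular family freely independent from $p_1 R_k p_1$ and $\{e_t^{(k)}\}\subset p_1 R_k p_1$ projections of the trace dictated by Lemma~\ref{L-a9}. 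If $M_k$ is already hyperfinite, set $R_k := M_k =: \tilde M_k$.

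Next I would form the outer algebra $\tilde M := \tilde M_1 \star_D \tilde M_2$, which by associativity of amalgamated free products rewrites as $P \star_D [L(\mathbb{F}_\infty)\oplus Dp_1^\perp]^{\star_D j}$, where $P := R_1 \star_D R_2$ and $j\in\{1,2\}$ counts the number of $M_k$'s that are free-group-factor amplifications. Lemma~\ref{L-a8} puts $P$ in $\mathcal{R}_4$. The universal property of amalgamated free products applied to the trace-preserving inclusions $M_k\hookrightarrow\tilde M_k$ gives a trace-preserving embedding $M \hookrightarrow \tilde M$ realizing $M=M_1\star_D M_2$ as the von Neumann subalgebra of $\tilde M$ generated by $P$ together with $\{e_t^{(k)} x_t^{(k)} e_t^{(k)}\}_{k,t}$. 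Moreover, in the finite corner $p_1 \tilde M p_1$ each family $\{x_t^{(k)}\}$ lives in a ``free group'' piece that is freely independent from $P$ over $D$, and hence $\{x_t^{(k)}\}_{k,t}$ is a semicircular family freely independent from $p_1 P p_1$ over $\mathbb{C}p_1$.

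The core step is then to run Dykema's construction of interpolated free group factors with the hyperfinite algebra $R$ of Lemma~\ref{L-a9} replaced by the $\mathcal{R}_4$-algebra $P$: attaching freely independent cut-down semicirculars $\{e_t^{(k)} x_t^{(k)} e_t^{(k)}\}$ to $P$ at the corner $p_1$ produces again an algebra in $\mathcal{R}_4$, and in fact, whenever the attached ``free group'' piece is non-trivial, it forces any summand of the resulting algebra meeting the central support of $p_1$ to be an amplification of an interpolated free group factor. This adapts the computation of Lemma~\ref{L-a9} together with \cite[Theorem 3.6, Proposition 4.3]{Dykema:DukeMathJ93} to an $\mathcal{R}_4$-base algebra; lifting the corner picture back to all of $M$ via \cite[Proposition V.1.40]{Takesaki:Book}, and applying the Dykema--R\u{a}dulescu dichotomy to force $r=\infty$ whenever an $L(\mathbb{F}_\infty)$-amplification is already present among the $M_k$, gives that $M$ is an amplification of an interpolated free group factor.

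Substandardness of the embedding $M_k\hookrightarrow M$ (for those $M_k$ that are free-group-factor amplifications) is then checked by compressing to the finite projection $p_1\in D\subseteq M_k$: both $p_1 M_k p_1$ and $p_1 M p_1$ are interpolated free group factors built by Dykema's recipe from a hyperfinite algebra and cut-down free semicirculars, and the inclusion $p_1 M_k p_1 \hookrightarrow p_1 M p_1$ arises by enlarging the hyperfinite base from $p_1 R_k p_1$ to $p_1 P p_1$ and adjoining additional freely independent cut-down semicirculars, which is precisely a standard embedding in the sense of \cite[Definition 4.1, Proposition 4.3]{Dykema:DukeMathJ93}. The main obstacle I anticipate is the core step, namely transferring Lemma~\ref{L-a9}'s computation from a hyperfinite $R$ to a general $P\in\mathcal{R}_4$; this requires a careful accounting of the central decomposition of $P$ together with the free-dimension calculus of \cite{Dykema:JFA02} already invoked in Section~3.
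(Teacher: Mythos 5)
Your opening moves are sound and match the intended strategy for case (b): applying Lemma~\ref{L-a9} to each free group factor amplification, passing to the enveloping algebra $\tilde M$, and noting that compressing $D$-freeness by $p_1\in D$ yields a semicircular family in $p_1\tilde Mp_1$ free from $p_1Pp_1$ over $\mathbb{C}p_1$. The genuine gap is your ``core step''. You propose to control the base $P:=R_1\star_D R_2$ by Lemma~\ref{L-a8}, but that lemma only yields $P\in\mathcal{R}_4$; it gives neither factoriality of $P$ nor any description of how $R_1$ sits inside $P$, and the assertion that ``attaching freely independent cut-down semicirculars to a general $\mathcal{R}_4$-algebra at a corner again gives an $\mathcal{R}_4$-algebra (indeed an amplified interpolated free group factor on the relevant central summand)'' is not something you can quote from \cite[Theorem 3.6, Proposition 4.3]{Dykema:DukeMathJ93} or \cite{Dykema:JFA02}: those results require Dykema's normal form over a hyperfinite (or free group factor) base, and your unproved claim is essentially the content of the lemma itself (it is precisely the point at which Redelmeier's treatment is considered problematic). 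What the argument actually needs, and what the paper establishes first as a separate \emph{preliminary fact} for two hyperfinite type II factors --- by approximating one side by finite-dimensional subalgebras via Proposition~\ref{P-a1}, applying Lemma~\ref{L-a7} inductively, and invoking \cite[Propositions 4.2, 4.3, Theorem 4.6]{Dykema:DukeMathJ93} and \cite[Theorem 4.1, Proposition 2.2]{Dykema:PacificJMath94} --- is the sharper statement that $P$ is an amplification of an interpolated free group factor with $p_1Pp_1=p_1R_1p_1\vee\{e_tx_te_t\,|\,t\in T\}''$, where $\{x_t\}$ is semicircular, free from $p_1R_1p_1$, and $e_t\in p_1R_1p_1$. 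Only with this description does $p_1Mp_1$ become ``hyperfinite corner plus free cut-down semicirculars'', hence an interpolated free group factor, and only then does factoriality of $M$ come for free (because $p_1$ already has full central support in the factor $P$); you flag this transfer as ``the main obstacle'' but leave it unresolved, and it is exactly where the proof must be done.

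A second, related gap is the substandardness check. Enlarging the base from $p_1R_kp_1$ to $p_1Pp_1$ and adjoining further free cut-down semicirculars is \emph{not} literally a standard embedding in the sense of \cite[Definition 4.1]{Dykema:DukeMathJ93}: one must first rewrite $p_1Pp_1$ over the base $p_1R_1p_1$ via the description above, and then, as in the proof of \cite[Proposition 4.3 (i)]{Dykema:DukeMathJ93}, rearrange the combined semicircular family so that all compressing projections (in particular the $e_t^{(2)}$) lie in $p_1R_1p_1$ while the elements $e_t^{(1)}x_t^{(1)}e_t^{(1)}$ generating $p_1M_1p_1$ over $p_1R_1p_1$ are kept fixed; only after this bookkeeping is $p_1M_1p_1\hookrightarrow p_1Mp_1$ visibly standard, whence $M_1\hookrightarrow M$ is substandard. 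Two smaller points: the appeal to the Dykema--R\u{a}dulescu dichotomy is out of place (the lemma does not claim $r=\infty$), and in case (a), where $M_1$ is hyperfinite but possibly not a factor, your scheme would again need the refined preliminary statement for $M_1\star_D R_2$ rather than Lemma~\ref{L-a8}; the paper handles this case by rerunning the preliminary finite-dimensional-approximation argument, not by invoking $\mathcal{R}_4$-membership.
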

\begin{proof} We first provide a preliminary fact when $M_1, M_2$ are hyperfinite type II$_1$ or II$_\infty$ factors instead. By Proposition \ref{P-a1} one can choose a sequence of non-trivial finite dimensional $*$-subalgebras $A_{2,1} \subseteq A_{2,2} \subseteq \cdots$ of $M_2$ such that $q_n : = \sum_{i \leq n} p_i \nearrow 1$ as $n\to\infty$, $q_n = 1_{A_{2,n}} \in D$, $Dq_n \subseteq A_{2,n}$ for every $n \in \mathbb{N}$, and all the $A_{2,n}$ generate $M_2$. Set $B_{2,n} := A_{2,n}\oplus Dq_n^\perp$, and consider the von Neumann subalgebras $P_n := M_1 \vee B_{2,n} \cong M_1 \star_D B_{2,n}$. Then $p_1 P_1 p_1 = p_1 M_1 p_1 \vee A_{21} \cong p_1 M_1 p_1 \star A_{21}$ becomes an interpolated free group factor by Dykema's result \cite[Theorem 4.6]{Dykema:DukeMathJ93}, and moreover by the proofs of \cite[Proposition 4.4 (ii)]{Dykema:DukeMathJ93} and \cite[Theorem 4.1]{Dykema:PacificJMath94} $p_1 M_1 p_1 \hookrightarrow p_1 P_1 p_1$ is a standard embedding.  Remark here that \cite[Definition 4.1]{Dykema:DukeMathJ93} makes sense even when the initial algebra $L(\mathbb{F}_r)$ there is just the hyperfinite II$_1$ factor. By the construction of the $B_{2,n}$ one has $q_{n+1} P_n q_{n+1} \cong q_{n+1} M_1 q_{n+1} \star_{Dq_{n+1}} \big(A_{2,n}\oplus\mathbb{C}p_{n+1}\big)$ and $q_{n+1} P_{n+1} q_{n+1} \cong q_{n+1} M_1 q_{n+1} \star_{Dq_{n+1}} A_{2,n+1}$ simultaneously. Hence a standard induction argument based on Lemma \ref{L-a7} together with \cite[Proposition 4.2]{Dykema:DukeMathJ93} shows that every embedding $p_1 P_n p_1 \hookrightarrow p_1 P_{n+1}p_1$ is standard, where one naturally observes that all the $P_n$ are amplifications of interpolated free group factors. By \cite[Proposition 4.3]{Dykema:DukeMathJ93} we conclude that $M$ is an amplification of interpolated free group factor and that the embedding $p_1 M_1 p_1 \hookrightarrow p_1 M p_1$ is standard. The last fact implies the following observation: By \cite[Proposition V.1.40]{Takesaki:Book} one can choose a (possibly infinite size) matrix unit system $e_{ij}$ in $M_1$ in such a way that $e_{11} = p_1$. By the definition of standard embeddings together with \cite[Proposition 2.2]{Dykema:PacificJMath94} one has $p_1 M p_1 = p_1 M_1 p_1 \vee \{e_t x_t e_t\,|\,t \in T\}''$, where $\{x_t\,|t\in T\}$ is a semicircular system and freely independent of $p_1 M_1 p_1$ in a bigger tracial $W^*$-probability space containing $(M,\mathrm{tr}_M(p_1)^{-1}\mathrm{tr}\!\,\upharpoonright_{p_1 M p_1})$ with the canonical trace $\mathrm{tr}_M$ on $M$ and $\{e_t\,|\,t \in T\}$ are projections in $p_1 M_1 p_1$. Thus $M = p_1 M_1 p_1 \vee \{e_t x_t e_t\,|\,t \in T\}'' \vee \{e_{ij}\}'' = M_1 \vee \{e_t x_t e_t\,|\,t \in T\}''$. 

Case (a): The same pattern of argument as above apparently works. 

Case (b): By Lemma \ref{L-a9} we can assume that $D \subseteq R_k \subseteq M_k = R_k \vee \{e_t^{(k)} x_t^{(k)} e_t^{(k)}\,|\,t \in T^{(k)}\}''$, $k=1,2$, as there, that is, all $e_t^{(k)} $ are in $p_1 R_k p_1$. Then $p_1 M p_1 = p_1(R_1 \vee R_2)p_1 \vee \{e_t^{(1)}x_t^{(1)}e_t^{(1)}\,|\,t \in T^{(1)}\}'' \vee \{e_t^{(2)}x_t^{(2)}e_t^{(2)}\,|\,t \in T^{(2)}\}''$ which can be re-written as 
$p_1 R_1 p_1 \vee \{e_t x_t e_t\,|\,t \in T\}'' \vee \{e_t^{(1)}x_t^{(1)}e_t^{(1)}\,|\,t \in T^{(1)}\}'' \vee \{e_t^{(2)}x_t^{(2)}e_t^{(2)}\,|\,t \in T^{(2)}\}''$ by the observation provided in the first paragraph. By the same reasoning as in the proof of \cite[Proposition 4.3 (i)]{Dykema:DukeMathJ93} we may and do assume that $\{x_t\,|\,t \in T\} \cup 
\{x_t^{(1)}\,|\,t \in T^{(1)}\}\cup\{x_t^{(2)}\,|\,t \in T^{(2)}\}$ is a  semicircular system that is freely independent of $p_1 R_1 p_1$ (in a bigger finite tracial $W^*$-probability space with unit $p_1$) and all the $e_t^{(2)}$ are in $p_1 R_1 p_1$ with keeping the $e_t^{(1)}x_t^{(1)}e_t^{(1)}$. It is now trivial that $p_1 M_1 p_1 \hookrightarrow p_1 M p_1$ is a standard embedding so that $M_1 \hookrightarrow M$ itself a substandard embedding.     
\end{proof} 

\begin{proof} (Proposition \ref{P2}) 
As Lemma \ref{L-a8} it suffices to investigate a semifinite tracial amalgamated free product von Neumann algebra $M = M_1 \star_D M_2$ over atomic \emph{commutative} $D$. 

Firstly, assume that $M_1$ is in the class $\mathcal{R}_4$ and $M_2$ an amplification of interpolated free group factor. We will prove that $M$ is an amplification of interpolated free group factor and $M_1 \hookrightarrow M$ is a substandard embedding. Write $M_1 = M_{1,h}\oplus\sum_{i \in I}^\oplus M_{1,i}$ as in Definition \ref{D-a1}. Let $I = \{1,2,\dots\}$ be an arbitrary enumeration. It suffices to prove that the embedding $M_{1,1} \hookrightarrow M$ is substandard. Denote $N_k := M_{1,h}\oplus\sum_{i\leq k}^\oplus M_{1,i} \oplus D(\sum_{i\geq k+1} 1_{M_{1,i}})$, $k\geq 0$, and set $P_k := N_k \vee M_2 \cong N_k \star_D M_2$. By Lemma \ref{L-a10} (a), $P_0$ is an amplification of interpolated free group factor. By Dykema's trick, see e.g.~\cite[Lemma 4.3]{Dykema:AmerJMath95} ({\it n.b.}~it holds in the non-tracial setting too with the same proof), we have $1_{M_{1,1}}P_1 1_{M_{1,1}} = M_{1,1} \star_{D1_{M_{1,1}}} 1_{M_{1,1}}P_0 1_{M_{1,1}}$ and $c_{P_1}(1_{M_{1,1}}) = c_{P_0}(1_{M_{1,1}}) = 1$. Since $M_{1,1}$ and $P_0$ are amplifications of interpolated free group factors, Lemma \ref{L-a10} (b) shows that so is $P_1$ and the embedding $M_{1,1} \hookrightarrow P_1$ is substandard. This procedure can be continued, and hence we get a sequence of substandard embeddings $M_{1,1} \hookrightarrow P_1 \hookrightarrow P_2 \hookrightarrow \cdots$ of amplifications of interpolated free group factors such that the $P_k$ generate $M$. Using \cite[Proposition 4.2]{Dykema:DukeMathJ93} inductively we can find a finite projection $e \in M_{11}$  so that $e M_{11} e \hookrightarrow e P_1 e \hookrightarrow \cdots$ inside $e M e$ form a sequence of standard embeddings. The proof of \cite[Proposition 4.3 (ii)]{Dykema:DukeMathJ93} shows that $eMe$ is an interpolated free group factor and the embedding $e M_{11} e \hookrightarrow e M e$ is standard.    

The desired assertion is proved by induction (at most) twice on the numbers of amplifications of interpolated free group factors firstly in $M_1$ and then in $M_2$; thus one needs to use Proposition \ref{P-a5} (at most) twice. The first step of the first induction was proved as Lemma \ref{L-a8}. Then each step in the inductions can be formulated as follows. Assume that $M_1 = M_{10}\oplus M_{11}$ such that $N := (M_{10}\oplus D1_{M_{11}})\star_D M_2$ falls in the class $\mathcal{R}_4$ and $M_{11}$ is an amplification of interpolated free group factor. As before we have $1_{M_{11}}M1_{M_{11}} = 1_{M_{11}}N1_{M_{11}} \star_{D1_{M_{11}}} M_{11}$ and $c := c_N(1_{M_{11}}) = c_M(1_{M_{11}})$. Note that $M = Nc^\perp \oplus Mc$, and what we proved above with Lemma \ref{L-a6} shows that $M$ falls in the class $\mathcal{R}_4$ and the embedding $N \hookrightarrow M$ is substandard. 
\end{proof}

\section*{Acknowledgments} We thank Ken Dykema and Daniel Redelmeier for electronic correspondences. We also thank Cyril Houdayer for some discussions at IHP in 2011 and his helpful comments on \S\S2.4.
}


\begin{thebibliography}{99}

\bibitem{BoutonnetHoudayerRaum:Preprint12} R.~Boutonnet, C.~Houdayer and S.~Raum, Amalgamated free product type III factors with at most one Cartan subalgebra. arXiv:1212.5171.   

\bibitem{ChifanHoudayer:DukeMathJ10} 
I.~Chifan and C.~Houdayer, 
Bass--Serre rigidity results in von Neumann algebras, 
{\it Duke Math.~J.}, {\bf 153} (2010), 23--54.

\bibitem{Connes:JFA74} A.~Connes, 
Almost periodic states and factors of type III$_1$, {\it J.~Funct.~Anal.}, {\bf 16} (1974), 415--445.

\bibitem{Dykema:DukeMathJ93} K.~Dykema, 
Free products of hyperfinite von Neumann algebras and free dimension, {\it Duke Math.~J.}, {\bf 69} (1993), 97--119. 

\bibitem{Dykema:PacificJMath94} K.~Dykema, 
Interpolated free group factors
{\it Pacific J.~Math.}, {\bf 163} (1994), 123--135.

\bibitem{Dykema:AmerJMath95} K.~Dykema, 
Amalgamated free products of multi-matrix algebras and a construction of subfactors of a free group factor, {\it American J.~Math.}, {\bf 117} (1995), 1555--1602.

\bibitem{Dykema:FieldsInstituteComm97} K.~Dykema, 
Free products of finite-dimensional and other von Neumann algebras with respect to non-tracial states, {\it Free probability theory} ({\it Waterloo, ON}, 1995), 41--88, {\it Fields Inst.~Commun.}, {\bf 12}, Amer.~Math.~Soc., Providence, RI, 1997.

\bibitem{Dykema:JFA02} K.~Dykema, 
Free subproducts and free scaled products of II$_1$ factors, 
{\it Jour.~Funct.~Anal.}, {\bf 194} (2002), 142--180. 

\bibitem{Dykema:MathProcCambPhilSoc04} K.~Dykema, 
Subfactors of free products of rescalings of a II$_1$ factors, 
{\it Math.~Proc.~Camb.~Phil.~Soc.}, {\bf 136} (2004), 643--656.

\bibitem{Dykema:BLMS11} D.~Dykema, 
A description of amalgamated free products of finite von Neumann algebras over finite dimensional subalgebras,
{\it Bull.~London Math.~Soc.}, {\bf 43} (2011), 63--74.

\bibitem{DykemaRedelmeier:arXiv11} K.~Dykema and D.~Redelmeier, 
The amalgamated free product of hyperfinite von Neumann algebras over finite dimensional subalgebras, arXiv:1110.5597.

\bibitem{GaoJunge:IMRN07} M.~Gao and M.~Junge, 
Examples of prime von Neumann algebras, 
{\it IMRN}, {\bf 2007} (2007), Article ID rnm042. 

\bibitem{Haga:TohokuMathJ76} Y.~Haga, 
Crossed products of von Neumann algebras by compact groups, 
{\it T\^{o}hoku Math.~Journ.}, {\bf 28} (1976), 511--522. 

\bibitem{Haagerup-JFA79-1} U.~Haagerup, Operator-valued weights in von Neumann algebras, I., {\it J.~Funct.~Anal.}, {\bf 32} (1979), 175--206.

\bibitem{Houdayer:Crelle09} C.~Houdayer, 
Construction of type II$_1$ factors with prescribed countable fundamental group, 
{\it J.~reine angew.~Math.}, {\bf 634} (2009), 169--207.

\bibitem{HoudayerVaes:JMathPuresAppl1x} 
C.~Houdayer and S.~Vaes, 
Type III factors with unique Cartan decomposition, 
{\it J.~Math.~Pures Appl.}, to appear. 

\bibitem{HoudayerRicard:AdvMath11} C.~Houdayer and E.~Ricard, 
Approximation properties and absence of Cartan subalgebra for free Araki--Woods factors, 
{\it Adv.~Math.}, {\bf 228} (2011), 764--802. 

\bibitem{IoanaPetersonPopa:Acta08} A.~Ioana, J.~Peterson and S.~Popa, 
Amalgamated free products of weakly rigid factors and calculation of their symmetry groups,  
{\it Acta Math.}, {\bf 200} (2008), 85--153.

\bibitem{Kadison:AmerJMath84} R.V.~Kadison, 
Diagonalizing matrices, {\it Amer.~J.~Math,}, {\bf 106} (1984), 1451--1468. 

\bibitem{Ozawa:ActaMath04} N.~Ozawa, 
Solid von Neumann algebras, {\it Acta Math.}, {\bf 192} (2004), 111--117.

\bibitem{Radulescu:InventMath94} 
F.~R\u{a}dulescu, 
Random matrices, amalgamated free products and subfactors of the von Neumann algebra of a free group of noninteger index, {\it Invent.~Math.}, {\bf 115} (1994), 347--389. 

\bibitem{Redelmeier:arXiv:1207.1117} D.~Redelmeier, 
The amalgamated free product of semifinite hyperfinite von Neumann algebras over atomic type I subalgebras, arXiv:1207.1117. 

\bibitem{Shlyakhtenko:PacificJMath97} D.~Shlyakhtenko, 
Free quasi-free states, 
{\it Pacific J.~Math.}, {\bf 177} (1997), 329--368.

\bibitem{Takesaki:Book} M.~Takesaki, 
{\it Theory of Operator Algebras}, I, II, III, Encyclopedia of Mathematical Sciences, 124, 125, 127, Operator Algebras and Non-commutative Geometry, 5, 6, 8, Springer, Berlin, 2002, 2003, 2003.

\bibitem{Tomiyama:JFA72} J.~Tomiyama, 
On some types of maximal abelian subalgebras, 
{\it Jour.~Funct.~Anal.}, {\bf 10} (1972), 373--386. 

\bibitem{Ueda:PacificJMath99} Y.~Ueda, 
Amalgamated free product over Cartan subalgebra, 
{\it Pacific J.~Math.}, {\bf 191} (1999), 359--392. 

\bibitem{Ueda:IMRN00} Y.~Ueda, 
On the fixed-point algebra under a minimal free product-type action of quantum group $\mathrm{SU}_q(n)$, IMRN, {\bf 2000} (2000), 35--56. 

\bibitem{Ueda:MathScand01} Y.~Ueda, 
Remarks on free products with respect to non-tracial states, {\it Math.~Scand.}, {\bf 88} (2001), 111--125. 

\bibitem{Ueda:AdvMath11} Y.~Ueda, 
Factoriality, type classification and fullness for free product von Neumann algebras, {\it Adv.~Math.}, {\bf 228} (2011), 2647--2671.  

\bibitem{Ueda:MRL11} Y.~Ueda, 
On type III$_1$ factors arising as free products, 
{\it Math.~Res.~Lett.}, {\bf 18} (2011), 909--920. 

\bibitem{Ueda:JLMS13} Y.~Ueda, 
Some analysis on amalgamated free products of von Neumann algebras in non-tracial setting, {\it J.~London Math.~Soc.}, {\bf 88} (2013), 25--48. 

\bibitem{Voiculescu:GAFA96} D.~Voiculescu, The analogues of entropy and of Fisher's information measure in free probability theory, III: The absence of Cartan subalgebras, {\it Geom.~Funct.~Anal.}, {\bf 6} (1996), 172--199. 

\end{thebibliography}
\end{document}